\definecolor{orange}{rgb}{1,0.5,0}
\title{\bf Cubic Extremal Transition and Gromov-Witten Theory}
\author{Rongxiao Mi}
\date{\vspace{-5ex}}
\newtheorem{thm}{Theorem}[section]
\newtheorem{lemma}[thm]{Lemma}
\newtheorem{prop}[thm]{Proposition}
\newtheorem{thmx}{Theorem}
\newtheorem{conj}[thm]{Conjecture}
\theoremstyle{definition}
\newtheorem{definition}[thm]{Definition}
\newtheorem*{ack}{Acknowledgement}
\newtheorem*{plan}{Plan of the paper}
\newtheorem*{notation}{Notation}
\def\th@remark{%
	\thm@headfont{\bfseries}%
	\normalfont 
	\thm@preskip\topsep \divide\thm@preskip\tw@
	\thm@postskip\thm@preskip
}
\theoremstyle{remark}
\newtheorem{remark}[thm]{Remark}
\numberwithin{equation}{section}
\newcommand{\bb}{\mathbb}
\newcommand{\ev}{\mathrm{ev}}
\newcommand{\Li}{\mathrm{Li}}
\newcommand{\cM}{\overline{\mathcal{M}}}
\newcommand{\cL}{\mathcal{L}}
\newcommand{\cH}{\mathcal{H}}
\newcommand{\cV}{\mathcal{V}}
\newcommand{\ZZ}{\mathbb{Z}}
\newcommand{\CC}{\mathbb{C}}
\newcommand{\cO}{\mathcal{O}}
\newcommand{\PP}{\mathbb{P}}
\newcommand{\DD}{\mathbb{D}}
\newcommand{\NN}{\mathbb{N}}
\def\res{\mathop{\mathrm{Res}\:}\limits}
\begin{document}

\maketitle
\begin{abstract}
In this article, we study the change of genus zero Gromov-Witten invariants under cubic extremal transitions, following Lee-Lin-Wang \cite{MR3751817}. We use the language of quantum $D$-modules.
\end{abstract}

\tableofcontents

\section{Introduction}

The classification of Calabi-Yau 3-folds is always an important problem in algebraic geometry. After physicists discovered millions of Calabi-Yau 3-folds in the early 1990s, resolution of this problem seems to be very remote. A popular classification scheme in the early 1990s was to classify them up to well-known algebraic surgeries and was referred to as {\em Miles Reid's fantasy} \cite{MR909231}. To be more specific, a Calabi-Yau 3-fold usually admits certain algebraic surgeries from Mori's program, which are \emph{flops} and \emph{extremal transitions}. Flops are a birational morphism obtained by contracting certain rational curves and then resolving them in a different way while preserving the Calabi-Yau condition. On the other hand, extremal transitions are a birational contraction followed by a smoothing. Among extremal transitions, a well-known example is the {\em conifold transition}, namely, to locally contract a $O(-1)+O(-1)$-rational curve to a point with conifold singularity and then smooth out the singularity by deforming its defining equation. While the contraction is an important birational operation in Mori's theory, the second part of the operation, {\em a smoothing}, is {\em not} birational. One can decompose the Mori's contraction into primitive ones in which $b_2$ is reduced only by one.
We call the resulting extremal transition {\em primitive}. Furthermore, Reid's fantasy was implemented for all Calabi-Yau complete intersections in toric varieties \cite{Avram:1995pu, Chiang:1995hi} (known as a web of Calabi-Yau 3-folds).
Assuming Reid's fantasy, the hope is that one may be able to prove {\em mirror symmetry}
for \emph{all} Calabi-Yau 3-folds by establishing a mirror symmetry statement among algebraic surgeries and checking it for one mirror pair.

In the past two decades, a tremendous amount of techniques have been developed for computing Gromov-Witten invariants and proving mirror symmetry. We still have very little tools to go beyond toric cases. Therefore, surgery remains an important technique for proving mirror symmetry for general Calabi-Yau 3-folds. Li-Ruan \cite{MR1839289} pioneered this line of attack in the middle of the 1990s where they calculated the change of Gromov-Witten invariants for flops and conifold transitions. During the process, they invented an important tool called {\em degeneration formula}
to reduce the problem to a local model which is often toric and thus can be calculated explicitly. In particular, they showed that a flop between 3-folds would induce an isomorphism of quantum cohomology rings after performing appropriate analytic continuation of a quantum variable. Since then, analytic continuation of Gromov-Witten theory has received a lot of interest. Together with the subsequent work on crepant resolutions of orbifolds \cite{MR3112518}, Li-Ruan's work on flops has generated an impressive new direction of the subject with many important problems such as {\em crepant resolution conjecture}, {\em crepant transformation conjecture}, {\em LG/CY correspondence} and so on.

A less known part of Li-Ruan's pioneer work \cite{MR1839289} is on conifold transitions. 
One considers a more general {\em small transition} where certain rational curves get contracted which are not necessarily $O(-1)+O(-1)$-type. These curves can be deformed into a collection of conifold transitions. Since Gromov-Witten invariants are deformation invariants, a general small transition can be dealt with in the same way as conifold transitions. 
The second part of Li-Ruan's work is that a small transition induces a homomorphism between their quantum cohomology rings.

Conifold transitions are a very special case of extremal transitions. According to Wilson's classification \cite{MR1150602}, there are three types of primitive contractions: one may contract a union of curves (small transition or type I), or a divisor to a point (type II), or a divisor to a curve (Type III). However, beyond small transitions, both the local model and degeneration formula become complicated. It is not clear what kind of statements can be proved.  As far as we know, there was no work on other two types of transitions during the last two decades. In this paper, we will attempt to break through this wall by studying cubic extremal transitions, which are perhaps the simplest case of Type II transitions. However, there are some other works \cite{MR3568645,2015arXiv150203277L} have been done to generalize Li-Ruan's work on small transitions in different settings.

Let $S$ be a cubic surface embedded in a smooth Calabi-Yau 3-fold $X$, where we assume that the rational curves on $S$ generate an extremal ray in the sense of Mori. Then there is a birational contraction $X\to Y_0$ in which $S$ gets contracted to a point in $Y_0$, whose local equation is given by 
$$x^3+y^3+z^3+u^3=0.$$
Then, we may smooth out the singularity by deforming the local equation
$$x^3+y^3+z^3+u^3=t \quad (t\neq 0),$$
by which we obtain another Calabi-Yau 3-fold $Y$. The process of going from $X$ to $Y$ is called a {\em cubic extremal transition}. Our goal is to study how genus zero Gromov-Witten invariants of $X$ and $Y$ are related under this transition.

Our approach begins with the study of \emph{the local model}, which is obtained by closing
up a neighborhood of $S$. We have that $X={\bf P}(K_S\oplus O)$, and $Y$ is a cubic 3-fold of ${\bf P}^4$.
The local model is interesting in its own right, as it fully captures the local surgery involved in cubic extremal transitions. It also serves as a key step when one wants to {\em globalize} the result using degeneration techniques.


It was known to Li-Ruan that one could not expect a relation between numerical Gromov-Witten invariants of $X$ and $Y$ and analytic continuation is expected to play a role. Following Lee-Lin-Wang \cite{MR3751817}, we formulate our result in terms of \emph{quantum $D$-module}, whose definition can be found in Section 2. One may view the quantum $D$-module as encoding the genus zero Gromov-Witten theory (See Remark 2.6). Let $\cH(X)$ and $\cH(Y)$ be the ambient part quantum $D$-modules of $X$ and $Y$, respectively.
The main result in this paper is the following.


\begin{thmx}[=Theorem 3.7]\label{thmA}
	One may perform analytic continuation of $\cH(X)$ over the extended K\"ahler moduli to obtain a $D$-module $\bar{\cH}(X)$, then there is a divisor $E$ and a submodule $\bar{\cH}^E(X)\subseteq \bar{\cH}(X)$ with maximum trivial $E$-monodromy such that
	$$\bar{\cH}^E(X)|_E\simeq \cH(Y),$$
	where $\bar{\cH}^E(X)|_E$ is the restriction to $E$.
\end{thmx}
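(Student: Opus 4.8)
The plan is to compare the two quantum $D$-modules by passing to their presentations as cyclic $D$-modules generated by the $J$-functions of $X$ and $Y$, since for both geometries the ambient quantum $D$-module is governed by a Picard-Fuchs-type operator obtained from the $I$-function via the mirror theorem. Concretely, $X = \mathbb{P}(K_S \oplus \mathcal{O})$ is a toric fibration over the cubic surface $S$, so its $I$-function can be written down explicitly by the mirror theorem for toric bundles, and $Y \subset \mathbb{P}^4$ is a degree-three hypersurface whose ambient $I$-function is governed by the standard hypergeometric-type operator $D_Y$ built from the quintic-style recipe specialized to a cubic. The first step I would carry out is to write both modules as $\mathbb{C}[z][q]\langle z q \partial_q\rangle$-modules annihilated by explicit differential operators $D_X$ and $D_Y$, reading off the operators from the $I$-functions; this reduces the theorem to a comparison of $D$-modules defined by ODEs in the K\"ahler parameter.

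Next I would analyze the singular locus of $D_X$ in the quantum variable. The extended K\"ahler moduli arises because the fibration direction of $X$ introduces an extra parameter whose analytic continuation connects the large-volume limit of $X$ to a boundary point; I would identify the divisor $E$ with the component of the discriminant where the extra monodromy associated to the $\mathbb{P}^1$-fibration becomes trivial. Analytic continuation of $\bar{\cH}(X)$ then amounts to continuing the flat sections (solutions of $D_X$) along a path in the extended moduli from the $X$-limit to the neighborhood of $E$, which is a standard monodromy computation for a hypergeometric local system. The submodule $\bar{\cH}^E(X)$ with maximal trivial $E$-monodromy is cut out as the subspace of flat sections invariant under the local monodromy around $E$, equivalently the sections that extend holomorphically across $E$; I would show this subspace is itself a $D$-submodule of the correct rank, namely equal to the rank of $\cH(Y)$, which matches because smoothing the cubic singularity changes $b_3$ by a controlled amount dictated by the Euler characteristic jump of the cubic transition.

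The heart of the argument is the restriction isomorphism $\bar{\cH}^E(X)|_E \simeq \cH(Y)$. The plan is to show that the differential operator $D_X$, after the change of variables implementing the analytic continuation and after restricting to $E$ (setting the extra K\"ahler parameter to its boundary value), degenerates exactly to the operator $D_Y$ governing the cubic 3-fold $Y$. I would verify this by matching the $I$-functions directly: the residue or specialization of the $X$-side $I$-function along $E$ should reproduce, term by term in $z$ and in the surviving K\"ahler parameter, the hypergeometric $I$-function of $Y$, with the cubic weight $3$ in the numerator of $Y$'s hypergeometric factor emerging from the defining equation $x^3+y^3+z^3+u^3$ of the contracted singularity. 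This matching of hypergeometric data is the computational core and should make the isomorphism manifest once the flat connections are identified.

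The hard part, I expect, will be correctly identifying the analytic continuation and pinning down the divisor $E$ together with the maximal-trivial-monodromy condition, rather than the final term-by-term matching of $I$-functions. The subtlety is that the K\"ahler moduli of $X$ must be \emph{extended} beyond its geometric range to reach the boundary point where $Y$ lives, so one must argue that the continued $D$-module $\bar{\cH}(X)$ is well-defined (independent of the chosen path up to the relevant monodromy) and that the submodule selected by trivial $E$-monodromy is precisely the image of $\cH(Y)$ rather than a larger or smaller piece. Controlling this requires a careful local analysis of the connection near $E$ — computing the local exponents and showing that the non-trivial monodromy eigenspaces are exactly the sections that do not survive the smoothing — which is where the geometry of the cubic transition (the vanishing cycle and the change in Hodge numbers) enters the $D$-module formalism.
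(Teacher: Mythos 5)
Your proposal follows essentially the same route as the paper: identify the ambient quantum $D$-modules with the explicit Picard--Fuchs systems annihilating $I^X(q_1,q_2)$ and $I^Y(y)$, analytically continue over the extended K\"ahler moduli via the change of variables in the fibration direction, identify $E$ as the boundary divisor ($x=q_1^{-1}=0$), cut out the maximal trivial-$E$-monodromy submodule, and match the restriction with $I^Y$. The paper realizes the two steps you flag as delicate by concrete computation --- the continuation is carried out by a Mellin--Barnes contour shift (Theorem 3.6), and maximality of the trivial-monodromy piece is seen directly from the two extra solutions $I_5,I_6$ whose leading terms $x^{1/3},x^{2/3}$ force nontrivial monodromy --- rather than by the local-exponent and Hodge-theoretic arguments you sketch, but the overall strategy is the same.
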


Unlike flops, extremal transitions usually decrease K\"ahler moduli but increase complex moduli. As a result, the Gromov-Witten theories of $X$ and $Y$ are {\em not} equivalent in general. In our study of the relation between $\cH(X)$ and $\cH(Y)$, one major challenge is that they have different ranks. \Cref{thmA} reveals the fact that the monodromy property plays a key role in relating these two $D$-modules. 

In the case above, both $X$ and $Y$ are complete intersections in toric varieties. It is well-known that Mirror Theorem identifies the (ambient part) quantum $D$-module with a GKZ-type differential equation system, whose fundamental solution is given by the components of the (ambient part) $I$-function. Here we denote the $I$-function for $X$ (resp. $Y$) by $I^X(q_1,q_2)$ (resp. $I^Y(y)$). Let $\tilde X$ (resp.  $\tilde Y$) be the ambient toric variety of $X$ (resp. $Y$), which is defined in Section 3. We may as well formulate a result in terms of their $I$-functions, which may be viewed as a numerical version of \Cref{thmA}.

\begin{thmx}[=Theorem 3.6]
	One may perform analytic continuation of $I^X(q_1,q_2)$ over the extended K\"ahler moduli to obtain $\bar I^X(x,y)$, where the change of variable is given by $x\mapsto q_1^{-1}$ and $y\mapsto q_1q_2$. Then there exists an explicit degree-preserving linear transformation $L:H^*(\tilde X)\to H^*(\tilde Y)$ such that $I^Y(y)$ is recovered by
	\[I^Y(y)=\lim_{x\to 0} L\circ \bar I^X(x,y).\]
\end{thmx}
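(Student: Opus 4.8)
The plan is to turn both sides into explicit hypergeometric series via the mirror theorem, to read the substitution $x\mapsto q_1^{-1}$, $y\mapsto q_1q_2$ as a wall-crossing in the extended K\"ahler moduli, and to realize the required continuation by a Mellin--Barnes integral. First I would record the two $I$-functions. Writing $H$ for the hyperplane class pulled back from $\mathbf{P}^3$ and $\xi$ for the relative hyperplane class of $\tilde X=\mathbf{P}(\cO_{\mathbf{P}^3}(-1)\oplus\cO)$, so that $H^*(\tilde X)=\CC[H,\xi]/(H^4,\xi(\xi-H))$, and noting that $X$ is cut out by a section of $\cO(3H)$, the mirror theorem for complete intersections in toric varieties gives
$$I^X(q_1,q_2)=q_1^{H/z}q_2^{\xi/z}\sum_{d_1,d_2\ge 0}q_1^{d_1}q_2^{d_2}\,\frac{\prod_{m=1}^{3d_1}(3H+mz)}{\prod_{m=1}^{d_1}(H+mz)^4\,\prod_{m=1}^{d_2}(\xi+mz)\,\prod_{m=1}^{d_2-d_1}(\xi-H+mz)},$$
while for $Y\subset \tilde Y=\mathbf{P}^4$ with hyperplane class $p$,
$$I^Y(y)=y^{p/z}\sum_{d\ge 0}y^d\,\frac{\prod_{m=1}^{3d}(3p+mz)}{\prod_{m=1}^{d}(p+mz)^5}.$$
Under the substitution the monomial becomes $q_1^{d_1}q_2^{d_2}=x^{d_2-d_1}y^{d_2}$ and the prefactor becomes $x^{(\xi-H)/z}y^{\xi/z}$. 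Since the $X$-chamber requires $q_1,q_2$ small, the rewritten expression converges nowhere near $x=0$, so genuine analytic continuation, and not a mere reindexing, is unavoidable.

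The linear map is essentially forced by the diagonal $d_1=d_2=d$: there the last denominator is empty and the summand equals $\frac{\prod_{m=1}^{3d}(3H+mz)}{\prod_{m=1}^{d}(H+mz)^4\prod_{m=1}^{d}(\xi+mz)}$, which becomes exactly the summand of $I^Y$ once both $H$ and $\xi$ are sent to $p$. I would therefore define $L:H^*(\tilde X)\to H^*(\tilde Y)$ to be the degree-preserving linear map determined by $H\mapsto p$, $\xi\mapsto p$, i.e. $H^a\xi^b\mapsto p^{a+b}$ modulo $p^5$; it is surjective with three-dimensional kernel containing $\xi-H$, which matches the drop in rank $8=\dim H^*(\tilde X)\to 5=\dim H^*(\mathbf{P}^4)$ and the trivial-$E$-monodromy submodule of \Cref{thmA}. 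Two structural features already explain why $L$ must be applied before the limit. First, $L$ kills $\xi-H$, so it annihilates the $\log x$ growth coming from the prefactor $x^{(\xi-H)/z}$ and sends it to $y^{p/z}$. Second, $L$ annihilates the contribution of the factor $\prod_{m=1}^{d_2-d_1}(\xi-H+mz)$ in every off-diagonal term with $d_2<d_1$, since the reciprocal product then acquires the vanishing factor $\xi-H$ at $m=0$, while the terms with $d_2>d_1$ carry positive powers of $x$. Heuristically, only the diagonal survives $\lim_{x\to 0}L\circ\bar I^X$.

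To make this rigorous I would carry out the continuation in the variable $q_1$ by Mellin--Barnes. Rewriting the Pochhammer products as ratios of $\Gamma$-functions, I would represent the inner sum over $d_1$ as $\frac{1}{2\pi i}\int\frac{\pi}{\sin(\pi s)}\,C(s,d_2;H,\xi)\,q_1^{s}\,ds$, whose residues on one side reproduce $I^X$ and whose residues on the other side, collected after shifting the contour, assemble the expansion of $\bar I^X(x,y)$ around $x=0$, the large-radius point of the adjacent chamber of the secondary fan. I would then apply $L$, use the vanishing observations above to discard all residue strings lying in $\ker L$, and let $x\to 0$, obtaining the diagonal series, namely $I^Y(y)$; the choice of branch in the continuation records the monodromy underlying \Cref{thmA}.

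The main obstacle is the cohomology-valued contour analysis in this last step. Because the coefficients $C(s,d_2;H,\xi)$ are polynomials in the nilpotent classes $H,\xi$ rather than scalars, the poles of $\pi/\sin(\pi s)$ collide with the $\Gamma$-factor poles to various orders, and the residues are genuinely cohomology-valued. I must verify that the contour can be moved without the $(\xi-H)$-poles producing spurious $x^0$ contributions that survive $L$, and that after the limit the $\mathbf{P}^3$-type denominator $(H+mz)^4$ together with $(\xi+mz)$ collapses precisely to $(p+mz)^5$ with the correct cubic numerator, so that the emergent series is $I^Y$ on the nose rather than up to a mirror transformation or an overall factor. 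Checking this matching degree by degree in $z$ and in the cohomological grading, and confirming that the discarded residues all lie in $\ker L$, is the delicate heart of the argument.
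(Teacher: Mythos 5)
Your overall strategy coincides with the paper's: rewrite both $I$-functions as ratios of $\Gamma$-functions, represent the inner sum over $d_1$ as a Mellin--Barnes integral in $s$ whose kernel has simple poles at the integers, shift the contour to pick up the second family of residues for $|q_1|$ large, and identify the result with $\bar I^Y(x,y)$ via a degree-preserving $L$ acting by $\xi^k\mapsto p^k$, so that the trivial $x$-monodromy statement (Lemma 3.4) gives the limit $x\to 0$. The diagonal heuristic you use to guess $L$ and the expected answer is also sound as motivation.

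However, the specific mechanism you propose for discarding the unwanted contributions would fail. You assert that $L$ annihilates every class divisible by $\xi-h$ (your $\xi-H$) and plan to ``discard all residue strings lying in $\ker L$.'' With your $L$ ($h^a\xi^b\mapsto p^{a+b}$), the ideal generated by $\xi-h$ is \emph{not} contained in $\ker L$: for instance $(\xi-h)h^3=h^3\xi=\xi^4\mapsto p^4\neq0$, and $(\xi-h)^4=-\xi^4\mapsto -p^4\neq0$, so $L$ does not even kill the $\log^4x$ term of the prefactor $x^{(\xi-h)/z}$. (Indeed $L$ cannot be multiplicative, since $h^4=0$ in $H^*(\tilde X)$ while $p^4\neq0$.) In the paper's computation the unwanted residue string --- the residues of $g_{d_2}(s,q_1)$ at the negative integers $s=-l-1$ --- carries the factor $1/\Gamma(\frac hz-l)^4$, hence is proportional to $h^4$ and vanishes \emph{in $H^*(\tilde X)$ itself}, before $L$ is ever applied; the surviving residues at $s=d_2+\frac{\xi-h}{z}-l$ then collapse, after repeated use of $h\xi=\xi^2$ and $h^4=0$, to a series purely in powers of $\xi$, on which $\xi^k\mapsto p^k$ behaves like a ring map and the comparison with $\bar I^Y(x,y)$ is term-by-term. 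So the delicate step you correctly flag at the end is resolved by the cohomology relations of $\tilde X$, not by $\ker L$; as written, your plan for closing that step does not work, though the surrounding architecture is exactly the paper's.
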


One interesting phenomenon arising out of our study is that the rank discrepancy between $\cH(X)$ and $\cH(Y)$ may be partially explained by the Fan-Jarvis-Ruan-Witten theory (FJRW theory, for short) associated to the cubic singularity. Let $ {\cL}(W)$ be the quantum $D$-module attached to the narrow part regularized FJRW $I$-function for $(W,G)$, where $W=x^3+y^3+z^3+u^3$ and $G=\langle J_W\rangle$ (see Definition 5.1). It turns out that the rank of ${\cL}(W)$ coincides with the difference between the ranks of $\cH(X)$ and $\cH(Y)$. We have the following result.

\begin{thmx}[=Theorem 5.4]\label{thmC}
	Let $\bar \cH(X)$ be the $D$-module considered in \Cref{thmA}, then there is another divisor $F$ in the extended K\"ahler moduli and a submodule  $\bar{\cH}^F(X)\subseteq \bar{\cH}(X)$ with maximum trivial $F$-monodromy such that
	$$\bar{\cH}^F(X)/\bar{\cH}^E(X)|_F\simeq {\cL}(W).$$
\end{thmx}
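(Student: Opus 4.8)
The plan is to treat $F$ as a second distinguished boundary of the extended K\"ahler moduli of $\tilde X$---the Landau--Ginzburg orbifold point attached to the cubic singularity $W$---and to re-run the analytic-continuation machinery underlying \Cref{thmA}, this time reading off the FJRW piece rather than $\cH(Y)$. Concretely, I would first make explicit the GKZ-type system satisfied by the components of $\bar I^X(x,y)$ produced in the continuation of \Cref{thmA}. The secondary fan of $\tilde X$ has several maximal cones; the cone giving the large-radius limit of $X$ and the cone whose restriction along $x=0$ recovers $Y$ are already used to build $E$. I would locate $F$ as the remaining coordinate boundary complementary to $E$ (in the variables $x\mapsto q_1^{-1}$, $y\mapsto q_1q_2$ of Theorem~3.6, the orbifold direction) and carry out the local expansion of $\bar I^X$ there by the usual Mellin--Barnes/residue rewriting.

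Next I would analyze the monodromy of $\bar\cH(X)$ along $F$. Since the quantum $D$-module of a toric complete intersection has regular singularities, the $F$-monodromy is quasi-unipotent; I would diagonalize its semisimple part and define $\bar\cH^F(X)\subseteq\bar\cH(X)$ as the maximal submodule on which that part acts trivially, i.e. the span of the continued solutions carrying no fractional $F$-exponent. A compatibility check---that the solutions generating $\bar\cH^E(X)$ already have trivial $F$-monodromy, so that $\bar\cH^E(X)\subseteq\bar\cH^F(X)$---makes the quotient $\bar\cH^F(X)/\bar\cH^E(X)$ and its restriction to $F$ well defined.

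In parallel I would compute the narrow regularized FJRW $I$-function of $(W,\langle J_W\rangle)$ with $W=x^3+y^3+z^3+u^3$. Here $J_W=\tfrac13(1,1,1,1)$, and the elements $J_W,J_W^2$ are the two narrow sectors (each acting without nonzero fixed coordinates), so $\cL(W)$ has exactly the rank predicted to equal that of $\bar\cH^F(X)/\bar\cH^E(X)$ via \Cref{thmC}. For this Fermat-type singularity the narrow $I$-function components are explicit hypergeometric series whose exponents are pinned by the eigenvalues $e^{2\pi i k/3}$, $k=1,2$; after the regularization (clearing the FJRW $\Gamma$-normalization) they present $\cL(W)$ as an explicit $D$-module.

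The main obstacle is the final matching: upgrading the numerical coincidence to an isomorphism $\bar\cH^F(X)/\bar\cH^E(X)|_F\simeq\cL(W)$ of $D$-modules. Two points require care. First, one must verify that after quotienting by $\bar\cH^E(X)$ the surviving classes of $H^*(\tilde X)$ correspond precisely to the narrow sectors $J_W,J_W^2$; this is where the fractional $F$-monodromy exponents have to resonate with the FJRW fractional degrees, and the bookkeeping of which classes descend is delicate. Second, one must produce an explicit degree-preserving linear map---the analogue on the quotient of the transformation $L$ of Theorem~3.6---and check that it intertwines the Dubrovin/Gauss--Manin connections, so that the hypergeometric solutions restricting along $F$ agree with the FJRW components not merely as functions but as flat sections. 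I expect the regularization step and the intertwining of the connections to be the technical heart of the argument.
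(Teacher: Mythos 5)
Your overall strategy --- take $F$ to be the remaining coordinate boundary $y=0$, cut out a maximal trivial-monodromy piece, quotient by $\bar{\cH}^E(X)$, and match the survivors with the narrow regularized FJRW $I$-function --- is the paper's strategy, and your count of the two narrow sectors $J_W,J_W^2$ and of the rank of $\cL(W)$ is correct. You do not, however, need a fresh Mellin--Barnes expansion at $F$: the two extra solutions are already in hand as the explicit hypergeometric series $I_5,I_6$ of Lemma 3.4 (leading terms $x^{1/3},x^{2/3}$), and Proposition 5.3 matches them with the components of $I^{reg}_{FJRW}$ by direct inspection after setting $x=\tau$, $z=1$, $y\to 0$; since $\cL(W)$ is \emph{defined} as the $D$-module attached to those components, the ``intertwining of connections'' you defer as the technical heart is essentially immediate.

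The genuine gap is your compatibility check. You propose to verify that $\bar{\cH}^E(X)\subseteq\bar{\cH}^F(X)$, and you define $\bar{\cH}^F(X)$ by triviality of only the \emph{semisimple} part of the $F$-monodromy (``no fractional $F$-exponent''). Both points conflict with the actual situation. In the paper's convention trivial monodromy includes the unipotent part (see Remark 2.10, where the unipotent monodromy of the polylogarithm already counts as nontrivial). The four solutions $I_1,\dots,I_4$ spanning $\bar{\cH}^E(X)$ are the components of $(3p)\,y^{p/z}\sum x^iy^j(\cdots)$ modulo $p^5$; because of the prefactor $y^{p/z}$, the components $I_2,I_3,I_4$ carry $\log y$ terms and hence have nontrivial unipotent monodromy around $y=0$. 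So $\bar{\cH}^E(X)\not\subseteq\bar{\cH}^F(X)$: the maximal trivial-$F$-monodromy submodule is $\langle I_1,I_5,I_6\rangle$ of rank $3$, its intersection with the rank-$4$ module $\bar{\cH}^E(X)$ is the rank-one piece $\langle I_1\rangle$, and the quotient in the statement must be read as $\bar{\cH}^F(X)/(\bar{\cH}^F(X)\cap\bar{\cH}^E(X))$ --- equivalently the image of $\bar{\cH}^F(X)$ in $\bar{\cH}(X)/\bar{\cH}^E(X)$ --- which has rank $2$ as required. As you set things up, either the inclusion fails outright (and your rank bookkeeping with it), or, with your weaker notion of triviality, $\bar{\cH}^F(X)$ becomes all of $\bar{\cH}(X)$ and is no longer the submodule of maximal trivial $F$-monodromy in the sense of the theorem. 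Dropping the inclusion claim and using the paper's notion of trivial monodromy repairs the argument.
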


Roughly speaking, \Cref{thmA} and \Cref{thmC} tell us that the genus zero Gromov-Witten theory of $X$ determines the genus zero Gromov-Witten theory of $Y$ as well as the genus zero FJRW theory for the cubic sigularity. Schematically, we have
\[\qquad \mbox{GW theory } (X)\Longrightarrow\begin{cases} 
\mbox{GW theory } (Y) \\
\qquad\quad+\\
\mbox{FJRW theory } (W)
\end{cases}\quad(\mbox{for }g=0).
\]

A very interesting question is if we can reverse the implication above, which we will consider in our future work.

In the global case where both $X$ and $Y$ are Calabi-Yau 3-folds, it is generally impossible to write down their $I$-functions. However, they still have well-defined quantum $D$-modules. We, therefore, make the following conjecture.

\begin{conj}\label{myconj}
	The statement of \Cref{thmA} holds for any primitive extremal transition between two Calabi-Yau 3-folds.
\end{conj}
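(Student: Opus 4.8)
The plan is to reduce the global statement to the local model already established in \Cref{thmA}, following the strategy by which Li-Ruan \cite{MR1839289} reduced the conifold transition to its toric local model via the degeneration formula. First I would set up a degeneration to the normal cone of the contracted locus. Given a primitive extremal transition $X \to Y_0 \rightsquigarrow Y$, let $Z \subseteq X$ be the contracted locus: a union of curves for Type I, a divisor mapping to a point for Type II, or a divisor mapping to a curve for Type III in Wilson's classification \cite{MR1150602}. Degenerating $X$ to the normal cone along $Z$ produces a semistable family whose central fiber is a union of $X$ glued to a \emph{local model} along a smooth divisor, where the local model is precisely the one obtained by closing up a neighborhood of $Z$ — in the cubic Type II case this is $\mathbf{P}(K_S \oplus O)$.

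Second, I would apply the genus-zero degeneration formula to write the Gromov-Witten invariants of $X$ as sums of products of relative invariants of the two pieces, and likewise for $Y$ using a compatible degeneration on the smoothing side. Because $X$ and $Y$ are isomorphic away from the transition locus, the relative invariants of the \emph{ambient} piece coincide for both, so the entire discrepancy between the two theories is concentrated in the relative invariants of the local model. This is where \Cref{thmA} enters: the local isomorphism $\bar{\cH}^E(X)|_E \simeq \cH(Y)$ should lift, after gluing, to the corresponding statement for the global quantum $D$-modules, with the divisor $E$ and the maximal-trivial-$E$-monodromy submodule $\bar{\cH}^E(X)$ inherited from the local picture. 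The point I would have to make precise is that the extra Kähler parameter dual to $Z$ is the only variable participating in the analytic continuation, so that the monodromy computation — a priori a global feature — factors through the local model and \emph{localizes} as the conjecture predicts.

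The hard part will be the gluing step. In the small-transition case the local model is toric and its relative invariants are explicitly computable, but beyond Type I one has neither an $I$-function nor a global toric structure, so the reconstruction of the absolute quantum $D$-module from relative data must be carried out intrinsically, and tracking the analytic continuation and monodromy through this reconstruction is the central technical obstacle. A further difficulty is that \Cref{thmA} is itself established only for the cubic local model; a proof of the full conjecture requires the analogous local statements for all primitive Type II and Type III contractions, the latter (divisor contracted to a curve) being entirely open and likely requiring the present analysis to be carried out in a family fibered over the base curve.

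Finally, I would use \Cref{thmC} as a consistency check: in the cubic case the rank discrepancy between $\cH(X)$ and $\cH(Y)$ is accounted for by the FJRW theory ${\cL}(W)$ of the hypersurface singularity, and for a general primitive transition one expects the rank of the lost monodromy-trivial directions to match the FJRW theory of the corresponding singularity. Verifying that no such directions are created or destroyed in the gluing would both confirm that the submodule $\bar{\cH}^E(X)$ is correctly identified and provide the bookkeeping needed to control the rank mismatch that makes the global problem delicate in the first place.
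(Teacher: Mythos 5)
The statement you are trying to prove is a \emph{conjecture} in the paper; the author does not prove it in general, and neither do you. What the paper actually does is verify the conjecture for a single global example (Theorem 4.4: $X$ a blow-up of a triple point on a singular quintic, $Y$ a smooth quintic), and it does so by a route entirely different from yours: since both $X$ and $Y$ there are hypersurfaces in toric varieties, their ambient $I$-functions are written down explicitly, the Picard--Fuchs systems are matched under the change of variable $x\mapsto q_1^{-1}$, $y\mapsto q_1q_2$, and the analytic continuation is carried out by hand via Mellin--Barnes integrals, exactly as in the local model of Section 3. Your degeneration-to-the-normal-cone strategy is the one the paper itself names as the \emph{hope} for a general proof (``The hope is that by using degeneration techniques one may prove \Cref{myconj} directly from a local result like \Cref{thmA}''), but it is not executed there, and your write-up does not execute it either.

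The concrete gaps, which you partly acknowledge but which are fatal to calling this a proof: (1) the degeneration formula produces \emph{relative} Gromov--Witten invariants, and there is no established procedure in the paper or in your argument for reconstructing the absolute quantum $D$-module --- let alone its analytic continuation over the extended K\"ahler moduli and the identification of the maximal trivial-$E$-monodromy submodule, which are statements about the global solution space of a differential system --- from relative data on the two pieces of the central fiber; the claim that the monodromy ``localizes'' is precisely what would need to be proved and is not a formal consequence of the degeneration formula. (2) \Cref{thmA} is proved only for the cubic local model $\PP(K_S\oplus\cO)$; the analogous local statements for general primitive Type II and all Type III contractions do not exist, so even granting the reduction, the base case of your induction is missing. (3) \Cref{thmA} and Theorem 4.4 concern the \emph{ambient part} quantum $D$-modules of complete intersections in toric varieties; a general Calabi--Yau 3-fold has no such presentation and its quantum $D$-module includes primitive classes, so the very objects in the conjecture are not the ones your local input controls. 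Your final paragraph on using \Cref{thmC} as a rank bookkeeping check is a sensible sanity test but does not close any of these gaps.
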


We will then verify this conjecture for one particular example, where $X$ is a Calabi-Yau 3-fold obtained by blowing up a triple point in a singular quintic, and $Y$ is a generic smooth quintic. We have the following theorem.
\begin{thmx}[=Theorem 4.4]\label{thmD}
	\Cref{myconj} holds for the cubic extremal transition going from $X$ to $Y$ as stated above.
	
\end{thmx}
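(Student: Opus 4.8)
The plan is to establish the statement of \Cref{thmA} for this global example by the explicit $I$-function method already used for the local model in Theorem 3.6 and Theorem 3.7, taking advantage of the fact that both $X$ and $Y$ are anticanonical hypersurfaces in smooth toric fourfolds. First I would pin down the toric geometry. The generic smooth quintic is $Y \in |{-}K_{\tilde Y}|$ for $\tilde Y = \PP^4$. Writing $\pi\colon \tilde X = \mathrm{Bl}_p\PP^4 \to \PP^4$ for the blow-up at the triple point $p$ and $E_0$ for its exceptional divisor, the discrepancy computation $K_{\tilde X} = \pi^\ast K_{\PP^4} + 3E_0 = -5H + 3E_0$ shows that the proper transform $X$ of the singular quintic lies in $|5H - 3E_0| = |{-}K_{\tilde X}|$, hence is a Calabi--Yau threefold; its exceptional locus over $p$ is the cubic surface $S = X \cap E_0 \subset E_0 \cong \PP^3$. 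This exhibits $X \rightsquigarrow Y$ as a cubic extremal transition in which $\tilde X$ has Picard rank $2$ (generated by $H$ and $E_0$) and $\tilde Y$ has Picard rank $1$, matching the numerology of the local model.

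Second, I would apply the Mirror Theorem for toric hypersurfaces to write the ambient $I$-functions explicitly, identifying $\cH(X)$ and $\cH(Y)$ with the associated GKZ/Picard--Fuchs $D$-modules. With $p_1 = H$, $p_2 = E_0$ and K\"ahler parameters $q_1, q_2$, the six Cox coordinates of $\tilde X$ carry weights $D_0 = p_1$, $D_1 = \cdots = D_4 = p_1 - p_2$ and $D_5 = p_2$, and the hypersurface contributes the factor $\prod_m (5p_1 - 3p_2 + mz)$; the quintic $I^Y(y)$ carries the familiar factor $\prod_m(5H + mz)\,/\prod_m(H+mz)^5$. The essential observation is that these two series are assembled from the same cubic building block $5p_1 - 3p_2$ and the same fibre/base hypergeometric structure as in the local model of Section 3, with the local $\PP^3$-factors replaced by the full quintic factors.

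Third --- the analytic heart --- I would carry out the analytic continuation of $\cH(X)$ across the extended K\"ahler moduli using the change of variables of Theorem 3.6, namely $x \mapsto q_1^{-1}$ and $y \mapsto q_1 q_2$, resumming the series in $q_1$ and taking the boundary limit $x \to 0$ (that is, $q_1 \to \infty$ with $q_1 q_2 = y$ held fixed). This is a Mellin--Barnes-type continuation carrying $I^X$ to the boundary divisor $E$ of the extended K\"ahler moduli at which the geometry of $Y$ emerges. I would then identify $E$ with $\{x = 0\}$, extract the submodule $\bar\cH^E(X)$ on which the $E$-monodromy is maximally trivial, and compute its restriction. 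Because the $q_1$-resummation acts only on the fibre/exceptional direction while the quintic direction is carried by the monodromy-invariant combination $y = q_1 q_2$, I expect the continuation to be governed by the one already performed locally, so that $\bar\cH^E(X)|_E$ reproduces $\cH(Y)$; equivalently $I^Y(y) = \lim_{x\to 0} L\circ \bar I^X(x,y)$ for a degree-preserving linear map $L\colon H^\ast(\tilde X)\to H^\ast(\tilde Y)$ built exactly as in Theorem 3.6.

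The main obstacle is controlling the genuinely two-variable analytic continuation of $I^X$ and verifying that the maximally-trivial-$E$-monodromy submodule restricts to precisely the rank-four quintic system, neither more nor less. In the local model the fibre and base directions decouple cleanly, whereas here they are coupled through the shared factor $5p_1 - 3p_2$, so one must confirm that the $q_1$-resummation does not disturb the quintic GKZ relations. I would settle this by matching Picard--Fuchs operators: exhibiting inside $\bar\cH(X)$ a rank-four sub-$D$-module cut out by trivial $E$-monodromy whose generator, after the gauge transformation $L$, restricts on $E$ to the quintic operator $\theta_y^4 - 5y\,(5\theta_y+1)(5\theta_y+2)(5\theta_y+3)(5\theta_y+4)$ with $\theta_y = y\,\partial_y$. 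Establishing this operator identity is exactly what reduces \Cref{thmD} to the local computation of Theorem 3.6.
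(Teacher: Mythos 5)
Your proposal follows essentially the same route as the paper: both identify the ambient toric fourfold (your $\mathrm{Bl}_p\PP^4$ is the paper's $\PP(\cO_{\PP^3}\oplus\cO_{\PP^3}(-1))$, and your class $5H-3E_0$ is its $3h+2\xi$), write down the ambient $I$-functions and their GKZ/Picard--Fuchs systems, match them under $x\mapsto q_1^{-1}$, $y\mapsto q_1q_2$, carry out the Mellin--Barnes continuation, and restrict the maximal trivial-$E$-monodromy submodule to $x=0$. The coupling you flag through the shared factor $5p_1-3p_2=3h+2\xi$ is dealt with in the paper exactly as you suggest, by adjoining the extra operator $P_0$ to the Picard--Fuchs system and verifying the continuation explicitly.
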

The proof of the above theorem is based on the explicit descriptions of their quantum $D$-modules, as they are both hypersurfaces in toric varieties. However, such an explicit description is not available for general Calabi-Yau 3-folds. The hope is that by using degeneration techniques one may prove \Cref{myconj} directly from a local result like \Cref{thmA}.

We should mention that there is a long list of works to study the change of Gromov-Witten theory under algebraic surgeries, such as the blow-up formulas by Hu \cite{MR1759269,MR1818985}, simple and ordinary flops by Lee-Lin-Wang \cite{MR2680420, MR3568338, MR3568339}, crepant resolution by Coates-Ruan \cite{MR3112518}, Coates-Corti-Iritani-Tseng \cite{CCIT1, MR2510741} and so on. We refer the reader to these papers for a complete reference.

\begin{plan}
	In Section 2, we will fix our notations and reformulate Li-Ruan's result \cite[Corollary B.1]{MR1839289} on conifold transitions in our current setting. The local model is studied in Section 3, where we prove Theorem A and Theorem B. In Section 4, we will prove \Cref{thmD} and thus verify \Cref{myconj} in a special case. In Section 5, we describe the genus zero FJRW $D$-module of the cubic singularity and then prove \Cref{thmC}. Throughout the paper, we will only consider homology/cohomology in even degrees, and we will freely switch between the perspectives of locally free $D$-modules and local systems in the proofs of our main results.
\end{plan}
\begin{ack}
	I am grateful to my advisor Yongbin Ruan, for introducing the problem and for all of his guidance and support. I would like to thank Yuan-Pin Lee for reading and commenting an early draft of the paper. This work is inspired by Chin-Lung Wang's talk on simple flips at the 2016 Workshop on Global Mirror Symmetry at Chern Institute of Mathematics. I would also like to thank Tyler Jarvis, Felix Janda, Ming Zhang, Rachel Webb, Robert Silversmith for helpful conversations. 
\end{ack}

\section{Quantum $D$-modules and Conifold Transitions}

In this section, we will study the quantum $D$-module aspect of the Li-Ruan's result \cite[Corollary B.1]{MR1839289} on conifold transitions. To fix our notation, we review some basics of Gromov-Witten theory. General references on this subject are \cite{MR1677117,MR1492534,MR2003030}.
\subsection{Gromov-Witten invariants and quantum $D$-modules} 

Let $P$ be a smooth projective variety, $H^*(P)$ be its cohomology ring in even degrees, with coefficients in $\bb Q$ unless specified otherwise. Let ${\cM}_{0,n}(P,\beta)$ be the moduli space of genus zero stable maps $f:(C,x_1,\cdots,x_n)\to P$ from a rational nodal curve $C$ with $n$ markings and $f_*[C]=\beta\in NE(P)$, the Mori cone of effective curves. Given $\gamma_1, \cdots, \gamma_n\in H^*(P)$, $a_1,\cdots, a_n\in \NN$, the descendent Gromov-Witten invariant is defined as
\[\langle \tau_{a_1}(\gamma_1)\cdots \tau_{a_n}(\gamma_n)\rangle_{\beta}:=\int_{[\cM_{0,n}(P,\beta)]^{vir}}\psi_1^{a_1}\ev_1^*\gamma_1\cdots \psi_n^{a_n}\ev_n^*\gamma_n,\]
where
\begin{itemize}
	\item $\ev_i: \cM_{0,n}(P,\beta)\to P$ is the $i$-th evaluation map.
	\item $\psi_i=c_1(\bb L_i)$, where $\bb L_i$ is the line bundle over $\cM_{0,n}(P,\beta)$ whose fiber over a moduli point $[(C,x_1,\cdots, x_n, f)]$ is the cotangent line $T^*_{x_i}C$ at the $i$-th marked point.
\end{itemize}
When $a_1=\cdots=a_n=0$, we call this primary Gromov-Witten invariant, and write it as
\[\langle \gamma_1\cdots \gamma_n\rangle_{\beta}:=\int_{[\cM_{0,n}(P,\beta)]^{vir}}\ev_1^*\gamma_1\cdots \ev_n^*\gamma_n.\]

To introduce the notion of the quantum $D$-module of $\cH(P)$, we choose a basis $\{T_i\}_{i=0}^m$ for $H^*(P)$ such that $T_0=\mathbf{1}$ and $\{T_1,\cdots,T_r\}$ is a nef basis for $H^2(P)$. Let $\{T^i\}$ be the dual basis for $\{T_i\}$. Choose a generic cohomology class $T=\sum t_iT_i$ with coordinate $t=(t_i)_{i=0}^m$, then the genus zero
Gromov-Witten potential is defined as
\[\Phi(T):=\sum_{n=0}^\infty\sum_{\beta\in NE(P)}\frac{Q^\beta}{n!}\langle \underbrace{T,T,\cdots,T}_{\text{\normalsize$n$-tupe}}\rangle_\beta.\]
The (big) quantum product is defined as
\[T_i\star_{t} T_j:=\sum_{k=0}^m \Phi_{ijk}T^k,\]
where the structure coefficients are $\Phi_{ijk}=\partial_{t_i}\partial_{t_j}\partial_{t_k} \Phi$. This product is associate due to WDVV equation.

In Givental's approach to the mirror theorem \cite{MR1653024}, the following generating function is introduced, which is usually called the {\em big $J$-function}
\[J_{\mathrm{big}}(t,z^{-1}):=1+\frac{T}{z}+\sum_{n=0}^{\infty}\sum_{\beta\in NE(P)}\frac{Q^\beta}{n!}\sum_{k=0}^m\left\langle \frac{T_k}{z(z-\psi_1)}, \underbrace{T,T,\cdots,T}_{\text{\normalsize$n$-tupe}}\right\rangle_\beta T^k.\]

It is usually convenient to consider the small $J$-function, which is obtained by restricting the big $J$-function to $t_{r+1}=\cdots=t_m=0$. Let $q_i=e^{t_i}$ for $i=1,\cdots,r$. These variables are usually called {\em small parameters} and may be viewed as local coordinates of the K\"ahler moduli space. In this setting, the Novikov varible $Q$ may be eliminated by setting $Q\equiv 1$. By divisor equation, the small $J$-function takes the following form
\begin{equation}\label{smallj}
J_\mathrm{sm}(t_0,q,z^{-1}):=e^{t_0/z}q^{T/z}\left(1+\sum_{\beta\neq 0}q^\beta\sum_{k=0}^m\left\langle \frac{T_k}{z(z-\psi_1)}\right\rangle_\beta T^k\right),
\end{equation}
where $$q^{T/z}:=\prod_{i=1}^{r}q_i^{T_i/z},\qquad q_i^{T_i/z}:=\exp\left(\frac{T_i}{z}\log q_i\right),\qquad q^\beta:=q_1^{T_1\cdot\beta}\cdots q_r^{T_r\cdot\beta}.$$

Let $\DD_q$ be the ring generated by the log differential operator $z\delta_{q_i}:=zq_i\partial_{q_i}$ over $\CC[q_i,q_i^{-1},z]$, where we view $z$ as a formal parameter. Let $J(q,z^{-1}):=J_\mathrm{sm}(0,q,z^{-1})$. We make the following definition
\begin{definition}
	The (small) {\em quantum $D$-module} $\cH(P)$ is the cylic $\DD_q$-module generated by $J(q,z^{-1})$. 
\end{definition}
\begin{remark}
	It is well known that the cyclic $D$-module generated by $J_\mathrm{big}(t,z^{-1})$ can be identified with the Dubrovin connection associated to the big quantum product. Our definition above coincides with the Dubrovin connection restricted to the small parameter space $H^2(P)$.
\end{remark}

\subsection{Quantum Lefschetz theorem and ambient quantum $D$-modules}

Having defined quantum $D$-module for a general smooth projective variety $P$, now we turn into the case where $Z$ is a smooth complete intersection inside $P$. Let $L_i$ be convex line bundles over $P$ and $\cV=\oplus L_i$, and assume $Z=\sigma^{-1}(0)$ for a section $\sigma\in H^0(\oplus \cV)$. The quantum Lefschetz theorem gives a way to compute $QH^*(Z)$ from $QH^*(P)$.

In Coates-Givental's work \cite{MR2276766}, a twisted version of $J$-function is introduced for the pair $(P,\cV)$ as follows.
\[J_\mathrm{big}^\cV(t,z^{-1}):=1+\frac{T}{z}+\sum_{n=0}^\infty\frac{Q^d}{n!}(\ev_{n+1})_*\left(\frac{\cV'_{0,n+1,d}}{z(z-\psi_{n+1})}\prod_{i=1}^n\ev^*_i T\right),\]
where $\cV'_{0,n+1,d}$ is the kernel of the map $R^0\varphi_*\ev^*_{n+1}\cV\to \ev_{n+1}^*\cV$ and $\pi:\cM_{0,n+1}(P,\beta)\to \cM_{0,n}(P,\beta)$ forgets the last marking.
Let $J_\mathrm{big}^P(t,z^{-1})$ be the big $J$-function for $P$, we write
\[J_\mathrm{big}^P(t,z^{-1})=\sum_{\beta}Q^\beta J_\beta(t,z).\]
Then we define the function $I_\mathrm{big}^\cV(t,z)$ as the following
\[I_\mathrm{big}^\cV(t,z,z^{-1}):=\sum_{\beta}Q^\beta J_\beta(t,z)\prod_i\prod_{k=1}^{L_i\cdot\beta}(c_1(\cL)+kz).\] 

When $c_1(Z)\geq 0$, the mirror theorem shows that $J_\mathrm{big}^P(\tau(t),z^{-1})=I_\mathrm{big}^P(t,z^{-1})$ for the mirror transformation $t\mapsto \tau(t)$. However, without the restriction $c_1(Z)\geq 0$, the function $I_\mathrm{big}^\cV(t,z,z^{-1})$ may have positive powers of $z$. In this situation, a fundamental result in Coates-Givental's work \cite{MR2276766} is the following.
\begin{thm}\label{mirror}
	$J_\mathrm{big}^\cV(\tau,z)$ is recovered from $I_\mathrm{big}^\cV(t,z,z^{-1})$ via the Birkhoff factorization procedure followed by a (generalized) mirror transformation $t\mapsto \tau(t)$.
\end{thm}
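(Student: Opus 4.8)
The plan is to recast both $J^{\cV}_{\mathrm{big}}$ and $I^{\cV}_{\mathrm{big}}$ inside Givental's symplectic loop-space formalism and to read off the statement from the overruled geometry of the Lagrangian cone together with the quantum Riemann--Roch correspondence between the twisted and untwisted theories. First I would set up the loop space $H^*(P)((z^{-1}))$ with Givental's symplectic form $\Omega(f,g)=\mathrm{Res}_{z=0}\,(f(-z),g(z))\,dz$, where $(\cdot,\cdot)$ is the Poincar\'e pairing, together with the standard polarization $H^*(P)((z^{-1}))=H^*(P)[z]\oplus z^{-1}H^*(P)[[z^{-1}]]$. The genus-zero theory of $P$ is then encoded by its Lagrangian cone $\mathcal{L}_P$, on which $-zJ^P_{\mathrm{big}}(t,-z)$ lies, and the $e(\cV)$-twisted theory---the pairing and invariants modified by the Euler class of $\cV$---produces a second cone $\mathcal{L}_{\cV}$ carrying $-zJ^{\cV}_{\mathrm{big}}(t,-z)$. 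The target of the proof is to locate $-zI^{\cV}_{\mathrm{big}}(t,-z)$ on $\mathcal{L}_{\cV}$ and then to normalize it.

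The central step is the quantum Riemann--Roch identity: the twisted cone is the image $\mathcal{L}_{\cV}=\Delta\,\mathcal{L}_P$ of the untwisted cone under an explicit symplectic loop-group transformation $\Delta$ built from the multiplicative characteristic class attached to $e(\cV)$. I would specialize the Coates--Givental operator to the Euler class, apply it to the known point $-zJ^P$ of $\mathcal{L}_P$, and check that on each curve class $\beta$ it acts precisely by the hypergeometric factor $\prod_i\prod_{k=1}^{L_i\cdot\beta}(c_1(L_i)+kz)$, thereby producing $-zI^{\cV}_{\mathrm{big}}$; hence $-zI^{\cV}_{\mathrm{big}}\in\mathcal{L}_{\cV}$. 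This is where the hypotheses enter: convexity of the $L_i$ guarantees that $\cV'_{0,n+1,d}$ is a genuine bundle and that the non-equivariant limit of the equivariant Euler-class twist exists, so that $\Delta$ and the identity $\mathcal{L}_{\cV}=\Delta\mathcal{L}_P$ are well defined.

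To finish I would invoke the overruled structure of $\mathcal{L}_{\cV}$: at each $\mathbf{f}\in\mathcal{L}_{\cV}$ the tangent space $T_{\mathbf{f}}$ is a free $\CC[z]$-module with $zT_{\mathbf{f}}\subseteq\mathcal{L}_{\cV}$ and $\mathbf{f}\in zT_{\mathbf{f}}$, and the twisted $J$-function is characterized as the unique family on $\mathcal{L}_{\cV}$ of the normalized shape $-z\mathbf{1}+\tau+O(z^{-1})$. Since $c_1(Z)\ge 0$ is not assumed, $I^{\cV}_{\mathrm{big}}$ genuinely carries positive powers of $z$ and is not yet in this shape; using that $T_{\mathbf{f}}$ is a $\CC[z]$-module, I would take the $\CC[z]$-span of $-zI^{\cV}_{\mathrm{big}}$ and its parameter derivatives and subtract a polynomial-in-$z$ correction---the Birkhoff factor---to reduce it to normalized form while staying on $\mathcal{L}_{\cV}$. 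By uniqueness the reduced point equals $-zJ^{\cV}_{\mathrm{big}}(\tau(t),-z)$, and the bookkeeping of the correction yields the generalized mirror transformation $t\mapsto\tau(t)$.

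I expect the quantum Riemann--Roch step to be the main obstacle: identifying $\mathcal{L}_{\cV}$ with $\Delta\mathcal{L}_P$ and controlling the non-equivariant limit of the Euler-class twisting operator is the substantive analytic input, and it is exactly there that convexity of $\cV$ is indispensable. By contrast, the Birkhoff factorization, though notationally heavy, is formal once the overruled cone and the $\CC[z]$-module property of its tangent spaces are established.
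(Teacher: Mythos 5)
The paper does not actually prove this statement: Theorem 2.1 is imported verbatim from Coates--Givental \cite{MR2276766} as a known result, so there is no internal argument to compare against. Your sketch is, in effect, a summary of the proof in that reference, and its architecture is right: the symplectic loop space with $\Omega(f,g)=\mathrm{Res}_{z=0}(f(-z),g(z))\,dz$, the overruled Lagrangian cones $\mathcal{L}_P$ and $\mathcal{L}_{\cV}$, quantum Riemann--Roch in the form $\mathcal{L}_{\cV}=\Delta\,\mathcal{L}_P$, and the recovery of $J^{\cV}_{\mathrm{big}}$ from a point of the cone with positive $z$-powers via Birkhoff factorization plus the generalized mirror map. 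One step is stated too loosely, however: the operator $\Delta$ is a $z$-series of cohomology operators that does not depend on the curve class, so it cannot literally ``act on each $\beta$ by the factor $\prod_i\prod_{k=1}^{L_i\cdot\beta}(c_1(L_i)+kz)$.'' In Coates--Givental the $\beta$-dependent hypergeometric modification arises only after combining the Stirling/stationary-phase asymptotics of $\Delta$ (built from the equivariant Euler class with parameter $\lambda$) with the divisor equation and the ruling of the cone, and one then checks that the resulting point still lies on $\mathcal{L}_{\cV}$; convexity enters exactly where you say, to ensure $L_i\cdot\beta\ge 0$ so that the non-equivariant limit $\lambda\to 0$ exists. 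With that step repaired your outline matches the standard proof; the concluding Birkhoff-factorization argument is formal once the $\CC[z]$-module structure of the tangent spaces $T_{\mathbf f}$ is in hand, as you note.
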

Let $J_\mathrm{big}^Z(t,z^{-1})$ be the big $J$-function for $Z$. The full genus zero Gromov-Witten theory of $Z$ is hard to determine due to the existence of primitive cohomology classes, which lie in the kernel of $\iota_*$. However, we have the following relation
\[\mathbf{e}(\cV)J_\mathrm{big}^\cV(t,z^{-1})=\iota_*J_\mathrm{big}^Z(\iota^*T,z^{-1}).\]
As before, we consider the small version of the big $J$(resp. $I$)-functions. By setting $t_{r+1}=\cdots=t_m=0$, $Q\equiv 1$ and $q:=(q_i)=(e^{t_i})$ for $i=1,\cdots,r$, we obtain the (multivalued) small $J$-function $J_\mathrm{sm}^\cV(t_0,q,z^{-1})$ (resp. small $I$-function $I_\mathrm{sm}^\cV(t_0,q,z,z^{-1})$). Now we make the following definition for the ambient part quantum $D$-module of $Z$.
\begin{definition}
	The {\em ambient part quantum $D$-module} of $Z$ is the cyclic $\DD_q$-module generated by $\mathbf{e}(\cV)J_\mathrm{sm}^\cV(0,q,z^{-1})$, which is still denoted by $\cH(Z)$ when there is no risk of confusion.
\end{definition}
\begin{remark}
	It follows directly from \Cref{mirror} that the ambient part quantum $D$-module of $Z$ may be identified with the cyclic $\DD$-module generated by $\mathbf{e}(\cV)I_\mathrm{sm}^\cV(0,q,z,z^{-1})$. It is often easy to write down the explicit expression of $\mathbf{e}(\cV)I^\cV(0,q,z,z^{-1})$ provided that $P$ is a toric projective variety or certain GIT quotients. The left ideal in $\DD_q$ that annihilates $I_\mathrm{sm}^\cV(0,q,z,z^{-1})$ is called {\em Picard-Fuchs ideal}.
\end{remark}
\begin{remark}
	Suppose $H^*(P)$ is generated by divisors, then by a reconstruction theorem \cite{MR2102400}, we can recover the genus zero Gromov-Witten potential of $P$ from the small $J$-function. In the case above where $Z$ is a complete intersection, this reconstruction procedure yields all the genus zero Gromov-Witten invariants in which the insertions are pulled back from the ambient space $P$. This allows us to use the ambient part quantum $D$-module $\cH(Z)$ to represent the genus zero Gromov-Witten theory of $Z$. 
\end{remark}
\begin{remark}
	One may also adopt the viewpoint of integrable connections to define the ambient part quantum $D$-module. A closely related definition for toric nef complete intersections is given by Mann-Mignon \cite{MR3663796}. 
\end{remark}

\subsection{Conifold transitions and Li-Ruan's theorem}
In this section, we will reformulate Li-Ruan's result on conifold transitions \cite[Corollary B.1]{MR1839289}. Let $P$ be a smooth Calabi-Yau 3-fold, then the genus zero Gromov-Witten theory of $P$ boils down to the following numbers
\[N_\beta^P:=\deg([\cM_{0,0}(X,\beta)]^{vir})\in \bb Q\qquad \forall \beta\in NE(X),\]
as the virtual dimension of $\cM_{0,0}(X,\beta)$ is zero. 

Let $X$ and $Y$ be smooth Calabi-Yau 3-folds such that $Y$ is obtained by contracting finitely many $O_{\PP^1}(-1)+O_{\PP^1}(-1)$-rational curves on $X$ followed by a smoothing. This surgery is called a {\em conifold transition}. The following theorem is proved in \cite{MR1839289}.
\begin{thm}[Li-Ruan]\label{lir} The conifold transition from $X$ to $Y$ induces the following morphisms:
	\[\varphi_*:H_2(X)\to H_2(Y), \quad \varphi^*:H^*(Y)\to H^*(X)\]
	such that
	\begin{enumerate}
		\item $\varphi_*$ is surjective;
		\item $\varphi^*:H^2(Y)\to H^2(X)$ is dual to $\varphi_*$ and the dual map to $\varphi_*:H^4(Y)\to H^4(X)$ gives a right inverse to $\varphi_*$;
		\item For every $0\neq\beta'\in H_2(Y)$, the set $\Lambda=\{\beta\in NE(X):\varphi_*(\beta)=\beta'\}$ is finite;
		\item For every $0\neq\beta'\in H_2(Y)$, the following relation holds 
		\[\sum_{\beta\in \Lambda}N^X_{\beta}=N^Y_{\beta'}.\]
	\end{enumerate}
\end{thm}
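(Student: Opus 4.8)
The plan is to build every map from the common open part of the transition, read off the homological statements (1)--(2) from the local topology of a node, and handle the Gromov--Witten content (3)--(4) by degenerating both $X$ and $Y$ to the common singular model $Y_0$. Concretely, let $\pi\colon X\to Y_0$ contract the disjoint $(-1,-1)$-curves $C_1,\dots,C_k$ to the nodes $p_1,\dots,p_k$, and let $Y$ be a smoothing of $Y_0$ with vanishing Lagrangian spheres $L_i\cong S^3$. The complement $U:=X\setminus\bigsqcup_i C_i\cong Y_0\setminus\{p_i\}\cong Y\setminus\bigsqcup_i L_i$ is a single open manifold sitting inside both $X$ and $Y$, and I would define $\varphi_*$ as the composite $H_2(X)\cong H_2(U)\to H_2(Y)$ and $\varphi^*$ as its cohomological transpose, so that all four maps factor through $U$ together with Poincar\'e--Lefschetz duality.

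Next I would compute the local homology. A tubular neighbourhood of $C_i$ in $X$ is the resolved conifold $\cO(-1)\oplus\cO(-1)\to\PP^1\simeq S^2$, a neighbourhood of $L_i$ in $Y$ is $T^*S^3\simeq S^3$, and the two share the boundary link $S^2\times S^3$. Feeding this into the long exact sequences of the pairs $(X,U)$ and $(Y,U)$ gives $H_2(U)\xrightarrow{\ \sim\ }H_2(X)$ and $H_2(Y)\cong H_2(U)/\langle[S^2_i]\rangle$, where the link class $[S^2_i]$ is identified with the curve class $[C_i]$. Hence $\varphi_*$ is the quotient by $\langle[C_1],\dots,[C_k]\rangle$, which gives surjectivity (1) and pins down $\ker\varphi_*$. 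Dualising produces the injection $\varphi^*\colon H^2(Y)\hookrightarrow H^2(X)$ transpose to $\varphi_*$, with image the divisor classes pairing to zero with every $C_i$; Poincar\'e duality $H^4\cong H_2$ then turns the pullback on $H^4$ into a section $s\colon H_2(Y)\to H_2(X)$, and a one-line pairing check gives $\varphi_*\circ s=\mathrm{id}$, which is (2).

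For the Gromov--Witten statements (3) and (4), which I would establish together, I would degenerate $Y$ to $Y_0$ and apply the degeneration formula, equivalently Li-Ruan's symplectic gluing. Fix $\beta'\neq 0$. Because the $L_i$ are Lagrangian and $\beta'$ is nonzero, a generic genus-zero stable map to $Y$ of class $\beta'$ avoids the vanishing cycles, so it lies in $U$ and degenerates to a curve in $Y_0$ meeting the nodes with multiplicities bounded in terms of $\beta'$. Lifting such a curve through $\pi$ yields a strict transform together with finitely many exceptional components, so the classes that contribute are of the form $\beta=\beta_0+\sum_i m_i[C_i]$ with bounded $m_i$; this gives the finiteness (3). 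Matching the virtual counts across the degeneration, where the $U$-part of a $\beta'$-curve in $Y$ corresponds to the $U$-part of its lifts in $X$ and the sum over the bounded exceptional multiplicities reassembles exactly $\Lambda$, yields $\sum_{\beta\in\Lambda}N^X_\beta=N^Y_{\beta'}$, which is (4).

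The main obstacle is the virtual geometry of this last step: one must show that passing through the singular central fibre $Y_0$ neither loses nor double-counts curves at the nodes, i.e.\ that the relative contribution of the local conifold at each $p_i$ is trivial once $\beta'\neq 0$, and that the bubbling into the rigid curves $C_i$ on the $X$-side is accounted for precisely by the sum over $\Lambda$. Controlling these gluing and boundary terms, rather than the homological bookkeeping behind (1)--(2), is where the real work lies, and it is exactly the local analysis carried out in \cite{MR1839289}.
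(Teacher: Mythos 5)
First, a point of calibration: the paper does not prove this theorem at all. It is stated as a quoted result of Li--Ruan (``The following theorem is proved in \cite{MR1839289}''), so there is no in-paper argument to compare yours against; the relevant comparison is with the strategy of the cited work. Measured against that, your outline is a faithful reconstruction. The homological part is complete and correct: identifying $X$ and $Y$ along the common open piece $U$, computing $H_*(X,U)$ and $H_*(Y,U)$ by excision and the Thom isomorphism for the $D^4$-bundle over $\PP^1$ and the $D^3$-bundle $T^*S^3$, and reading off $H_2(U)\cong H_2(X)$ and $H_2(Y)\cong H_2(X)/\langle[C_1],\dots,[C_k]\rangle$ is exactly how $\varphi_*$, $\varphi^*$ and the section in (2) are obtained; this disposes of (1) and (2), and is consistent with the exact sequence the paper later quotes from \cite{MR3568645}.

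For (3)--(4) your proposal is the right strategy (degenerate to $Y_0$ and localize the discrepancy at the conifold points via a gluing/degeneration formula), but as written it is an outline rather than a proof, and you say so yourself. Two places where the real content sits: (i) finiteness of $\Lambda$ is ultimately a statement about the effective cone, and ``bounded multiplicities of the exceptional components'' needs an actual bound --- for instance, any two elements of $\Lambda$ differ by $\sum_i m_i[C_i]$, the pullback of an ample class from $Y_0$ pairs to the same constant on all of $\Lambda$ while an ample class on $X$ must pair positively, and the $[C_i]$ span an extremal face, which together confine the $m_i$ to a finite set; (ii) the identity in (4) is precisely the assertion that the local conifold contributes nothing to classes with $\varphi_*(\beta)\neq 0$ once the multiple-cover contributions $N^X_{nE_i}=1/n^3$ are separated off, and establishing that the gluing across the singular fibre neither loses nor double-counts curves is the symplectic surgery analysis of \cite{MR1839289}, which you invoke but do not carry out. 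So your write-up is an accurate roadmap with the hard analytic step delegated to the reference --- which is, in effect, exactly what the paper itself does.
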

We will study the implication of this theorem on quantum $D$-modules. For a Calabi-Yau 3-fold $P$, the small $J$-function takes a simpler form, which is proved in \cite[Section 10.3.2]{MR1677117}.
\begin{lemma}\label{cyJ}
	Setting $t_0=0$, the J-function of $P$ defined in \Cref{smallj} is
	\[J^P(q,z)=q^{T/z}\left(1+z^{-2}\sum_{\beta\neq 0} q^\beta N_\beta^P[\beta] -2z^{-3}\sum_{\beta\neq 0} q^\beta N_\beta^P[pt]\right),\]
	where $[\cdots]$ means the Poincare dual.
\end{lemma}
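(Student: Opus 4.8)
The plan is to start from the explicit expression for the small $J$-function given in \eqref{smallj} with $t_0=0$, namely
\[J^P(q,z)=q^{T/z}\left(1+\sum_{\beta\neq 0}q^\beta\sum_{k=0}^m\left\langle \frac{T_k}{z(z-\psi_1)}\right\rangle_\beta T^k\right),\]
and to show that for a Calabi-Yau $3$-fold the inner one-point descendent series collapses to just two terms. First I would expand the descendent insertion geometrically as $\frac{1}{z(z-\psi_1)}=\sum_{a\geq 0}z^{-2-a}\psi_1^a$, so that the bracket becomes $\sum_{a\geq 0}z^{-2-a}\langle \tau_a(T_k)\rangle_\beta$.

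The key observation is a dimension count. Since $P$ is a Calabi-Yau $3$-fold we have $\dim_{\CC}P=3$ and $c_1(TP)=0$, so the virtual dimension of $\cM_{0,1}(P,\beta)$ equals $\dim_{\CC}P-3+1+\int_\beta c_1(TP)=1$. Hence $\langle \tau_a(T_k)\rangle_\beta=\int_{[\cM_{0,1}(P,\beta)]^{vir}}\psi_1^a\,\ev_1^*T_k$ vanishes unless $a+\tfrac12\deg T_k=1$, where $\deg$ denotes the real cohomological degree. This forces only two possibilities: $a=0$ with $T_k\in H^2(P)$, and $a=1$ with $T_k=\mathbf{1}$; all higher descendents drop out.

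For these two surviving families I would invoke the standard universal relations. The divisor equation gives $\langle \tau_0(T_k)\rangle_\beta=(T_k\cdot\beta)\langle\rangle_\beta=(T_k\cdot\beta)N_\beta^P$ for $T_k\in H^2(P)$, while the dilaton equation gives $\langle \tau_1(\mathbf{1})\rangle_\beta=(2g-2+n)\langle\rangle_\beta=-2N_\beta^P$ with $g=n=0$. Summing the $a=0$ contribution over a basis of $H^2(P)$, I would recognize $\sum_{T_k\in H^2}(T_k\cdot\beta)T^k$ as the Poincaré dual $[\beta]\in H^4(P)$, since its pairing against any divisor $T_j$ reproduces $\int_\beta T_j$; similarly $T^0$, the dual of $\mathbf{1}\in H^0(P)$, is the point class $[pt]$. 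Substituting these back into the bracket yields $z^{-2}N_\beta^P[\beta]-2z^{-3}N_\beta^P[pt]$, which is precisely the claimed form.

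This argument is essentially routine degree bookkeeping, so I do not expect a serious obstacle. The only point requiring care is the correct application of the dilaton and divisor equations to the genus-zero, zero-point invariant $\langle\rangle_\beta=N_\beta^P$, which is well defined exactly because the Calabi-Yau condition makes $\cM_{0,0}(P,\beta)$ virtually $0$-dimensional, together with the clean identification of the dual-basis sums with the Poincaré duals $[\beta]$ and $[pt]$.
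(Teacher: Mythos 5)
Your argument is correct and is essentially the standard one: the paper itself gives no proof of this lemma but defers to \cite[Section 10.3.2]{MR1677117}, and the proof there is exactly your dimension count on $\cM_{0,1}(P,\beta)$ (virtual dimension $1$ by the Calabi--Yau condition) combined with the divisor and dilaton equations and the identification of the dual-basis sums with $[\beta]$ and $[pt]$. No gaps.
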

This lemma is useful way in comparing the small $J$-functions of $X$ and $Y$. The topological change of cornifold transitions has been studied in \cite{MR3568645}. It is shown that there is an exact sequence:
\begin{center}
	\begin{tikzcd}
	\oplus_{i=1}^k\bb C[E_i] \arrow[r] & H_2(X)\arrow[r,"\varphi_*"] & H_2(Y)\arrow[r] &0,
	\end{tikzcd}
\end{center}
where $\varphi_*$ is the morphism in \Cref{lir}, and $[E_i]$ are the exceptional curve classes in the contraction of $X$. According the multiple cover formula, we have
\[N^X_{nE_i}=\frac{1}{n^3},\quad i=1,2,\cdots,k.\]

Following \cite{MR3568645}, we choose a basis $b_1,\cdots,b_{r+m}$ of $H_2(X)$ in such a way that $b_1,\cdots,b_{r+m}$ span a cone containing the Mori cone of $X$, and the last $m$ elements $b_{r+1},\cdots, b_{r+m}$ span a cone containing the classes $[E_1],\cdots,[E_k]$, where $m$ is the dimension of the cone spanned by the classes $[E_1],\cdots,[E_k]$. In general we have $m\leq k$, since the curve classes $[E_i]$ have a nontrivial linear relation. Once such a basis is chosen, we get a natural basis $\varphi_*b_{1},\cdots,\varphi_*b_r$ for $H_2(Y)$. So we may use $q_1,\cdots, q_{r+m}$ as the small parameters for $X$ which are dual to $b_1,\cdots, b_{r+m}$, and $\tilde q_{1},\cdots,\tilde q_{r}$ for $Y$ dual to $\varphi_*b_{1},\cdots,\varphi_*b_{r}$. 
\begin{lemma}\label{limJ}
	The small $J$-function of $X$ can be written as a sum
	\begin{equation}\label{decomp}
	J^X(q,z^{-1})=J_1^X(q,z^{-1})+J_2^X(q,z^{-1}),
	\end{equation} 
	such that
	\begin{equation}\label{rest}
	\lim_{\substack{q_i\to 1\\r+1\leq i\leq r+m}}L\circ J_1^X(q,z^{-1})=J^Y(\tilde q,z^{-1}),
	\end{equation}
	where $L:H^*(X)\to H^*(Y)$ is the dual morphism to $\varphi_*$ given in \Cref{lir}, and $\tilde q_i\mapsto q_i$ for $i=1,2,\cdots,r$.
\end{lemma}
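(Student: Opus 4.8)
The plan is to start from the explicit shape of the small $J$-functions furnished by \Cref{cyJ} and to let the decomposition \eqref{decomp} be dictated by the fibers of $\varphi_*$. Concretely, I would split the nonzero effective classes $\beta\in NE(X)$ into $A=\{\beta:\varphi_*\beta\neq 0\}$ and the set $B=\{\beta:\varphi_*\beta=0\}$ of purely exceptional classes, and set
\[J_1^X:=q^{T/z}\Big(1+z^{-2}\sum_{\beta\in A}q^\beta N_\beta^X[\beta]-2z^{-3}\sum_{\beta\in A}q^\beta N_\beta^X[pt]\Big),\]
putting the remaining ($\beta\in B$) terms into $J_2^X$, so that $J^X=J_1^X+J_2^X$ by \Cref{cyJ}. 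With this choice only $J_1^X$ can survive the limit after applying $L$, since $L[E_i]=[\varphi_*E_i]=0$ annihilates the exceptional curve insertions collected in $J_2^X$.

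Next I would apply $L$ to $J_1^X$ and pass to the limit $q_j\to 1$ for $r+1\le j\le r+m$, recording three facts. First, the Li-Ruan map $L$, being dual to $\varphi_*$, acts by $L[\beta]=[\varphi_*\beta]$ on $H^4$, by $L[pt]=[pt]$ on $H^6$, and by $L(T_j)=\tilde T_j$ for $j\le r$ on $H^2$. Second, the basis is chosen so that $\varphi^*\tilde T_j=T_j$ for $j\le r$: indeed $\langle\varphi^*\tilde T_j,b_l\rangle=\langle\tilde T_j,\varphi_*b_l\rangle$ equals $\delta_{jl}$ for $l\le r$ and vanishes for $l>r$ (there $\varphi_*b_l=0$), so that $T_j\cdot\beta=\tilde T_j\cdot\varphi_*\beta$. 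Third, \Cref{lir}(3)--(4) hold. From the second fact each monomial $q^\beta=\prod_i q_i^{T_i\cdot\beta}$ tends to $\prod_{j\le r}q_j^{\tilde T_j\cdot\beta'}=\tilde q^{\beta'}$ with $\beta'=\varphi_*\beta$ and $\tilde q_j=q_j$. Regrouping the $A$-sum by the value $\beta'$—a \emph{finite} regrouping for each fixed $\beta'$ by \Cref{lir}(3)—and using $\sum_{\varphi_*\beta=\beta'}N_\beta^X=N_{\beta'}^Y$ from \Cref{lir}(4) together with $L[\beta]=[\beta']$, the curve part of $L\circ J_1^X$ becomes exactly the curve part of $J^Y(\tilde q,z^{-1})$ prescribed by \Cref{cyJ}.

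The delicate point—and what I expect to be the main obstacle—is the prefactor $q^{T/z}$, which acts by cup product, whereas $L$ is merely linear. I would treat it degree by degree in $H^*(X)=H^0\oplus H^2\oplus H^4\oplus H^6$, where on a threefold $q^{T/z}=1+z^{-1}T+\tfrac12 z^{-2}T^{\cup2}+\tfrac16 z^{-3}T^{\cup3}$ truncates and, in the limit, $T\to T_{\mathrm{red}}:=\sum_{j\le r}(\log q_j)T_j$. One must then verify the intertwining relations $L(T_{\mathrm{red}}^{\cup a}\cup v)=\tilde T_{\mathrm{red}}^{\cup a}\cup L(v)$ for $v\in\{1,[\beta],[pt]\}$. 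The cross term is controlled by the projection formula, $T_{\mathrm{red}}\cup[\beta]=(T_{\mathrm{red}}\cdot\beta)[pt]$ with $T_{\mathrm{red}}\cdot\beta=\tilde T_{\mathrm{red}}\cdot\varphi_*\beta$; the genuinely classical input is the compatibility of triple intersections of pulled-back divisors, $\int_X T_aT_bT_c=\int_Y\tilde T_a\tilde T_b\tilde T_c$ for $a,b,c\le r$ (needed for the $T^{\cup2}$ and $T^{\cup3}$ terms), which is a topological feature of conifold transitions—the change in the cohomology ring being concentrated on the vanishing and exceptional classes killed by $L$. Finally I would justify interchanging $L$, the cup-product prefactor, the infinite curve sum and the limit: since the prefactor is a finite sum and, for fixed output class $\beta'$, only finitely many $\beta$ contribute by \Cref{lir}(3), the limit may be taken coefficient-wise in $\tilde q^{\beta'}$. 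Reassembling the four cohomological degrees then reproduces $J^Y(\tilde q,z^{-1})$ and yields \eqref{rest}.
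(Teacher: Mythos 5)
Your proposal is correct and follows essentially the same route as the paper: the same decomposition of $J^X$ according to whether $\varphi_*\beta$ vanishes, followed by an application of Li--Ruan's Theorem (parts (1)--(4)) and the identities $L[\beta]=[\varphi_*\beta]$, $L[pt]_X=[pt]_Y$. The only difference is that you spell out the intertwining of $L$ with the cup-product prefactor $q^{T/z}$ (via matching triple intersections of pulled-back divisors), a point the paper's proof passes over silently.
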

\begin{proof}
	We define the following series:
	\[J_1^X(q,z^{-1}):=\prod_{i=1}^{b+m}q_i^{[b_i]/z}\left(1+z^{-2}\sum_{\varphi_*(\beta)\neq 0} q^\beta N_\beta^X[\beta] -2z^{-3}\sum_{\varphi_*(\beta)\neq 0} q^\beta N_\beta^X[pt]_X\right),\]
	\begin{align*}
	J_2^X(q,z) & =\prod_{i=1}^{b+m}q_i^{[b_i]/z}\left(z^{-2}\sum_{\substack{\varphi_*(\beta)= 0\\\beta\neq 0}} q^\beta N_\beta^X[\beta] -2z^{-3}\sum_{\substack{\varphi_*(\beta)= 0\\\beta\neq 0}} q^\beta N_\beta^P[pt]_X\right)\\
	&=\prod_{i=1}^{b+m}q_i^{[b_i]/z}\left(z^{-2}\sum_{i=1}^{k}\sum_{n=1}^\infty q^{nE_i}N^X_{nE_i}[nE_i] -2z^{-3}\sum_{i=1}^{k}\sum_{n=1}^\infty q^{nE_i}N^X_{nE_i}[pt]_X\right)\\
	&=\prod_{i=1}^{b+m}q_i^{[b_i]/z}\left(z^{-2}\sum_{i=1}^{k}\sum_{n=1}^\infty \frac{q^{nE_i}}{n^2}[E_i] -2z^{-3}\sum_{i=1}^{k}\sum_{n=1}^\infty \frac{q^{nE_i}}{n^3}[pt]_X\right).
	\end{align*}
	Obviously, we have \[J^X(q,z^{-1})=J_1^X(q,z^{-1})+J_2^X(q,z^{-1}).\]
	The small $J$-function of $Y$ is the following
	\[J_1^X(\tilde q,z^{-1}):=\prod_{i=1}^{b}\tilde q_i^{[b_i]/z}\left(1+z^{-2}\sum_{\beta'\neq 0}  \tilde q^{\beta'} N_{\beta'}^Y[\beta'] -2z^{-3}\sum_{\beta'\neq 0} \tilde q^{\beta'} N_{\beta'}^Y[pt]_Y\right).\]
	Since $L$ is dual to $\varphi^*$, by \Cref{lir}(1-2) we have that
	\[L[\beta]=[\varphi_*(\beta)],\quad L[pt]_X=[pt]_Y.\]
	Comibing this with \Cref{lir} gives the desired result.
\end{proof}
Let $E$ be the locus $\{q_{r+1}=\cdots=q_{r+m}=1\}$ in $\CC^{r+m}$. We call this the {\em transition locus}. Consider a small punctured neighborhood $U$ of the point $\mathbf{q}=(q_1=\cdots=q_r=0,q_{r+1}=\cdots=q_{r+m}=1)$ with each line $q_i=0(\mbox{or }1)$ deleted. Fixing $q_1,\cdots,q_r$, we choose a path to analytically continue $J^X(q,z^{-1})$ to a point in $U$, so that we obtain a function $\bar{J}^X(q,z^{-1})$. As in the decomposition (\ref{decomp}), we have
\[\bar J^X(q,z^{-1})=\bar J_1^X(q,z^{-1})+\bar J_2^X(q,z^{-1}).\]
\begin{lemma}\label{trivialm}
	$\bar J_1^X(q,z^{-1})$ has trivial $E$-monodromy, i.e. $\bar J_1^X(q,z^{-1})$ remains unchanged under analytical continuation of $\bar{J}_1^X(q,z^{-1})$ along any loop circulating around $E$ (with fixed $q_1,\cdots,q_r$).
\end{lemma}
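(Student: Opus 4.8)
The plan is to isolate the source of multivaluedness and show that for $\bar J_1^X$ it reduces to single-valued (in fact polynomial) dependence on the variables $q_{r+1},\dots,q_{r+m}$. First I would dispose of the prefactor $\prod_{i=1}^{r+m}q_i^{[b_i]/z}=\exp\big(\sum_i z^{-1}[b_i]\log q_i\big)$: for $i>r$ the function $\log q_i$ is holomorphic near $q_i=1$, so a loop encircling $q_i=1$ (which leaves the branch point $q_i=0$ outside) returns $\log q_i$ to its initial value. Thus the prefactor is single-valued around $E$, and the entire $E$-monodromy of $\bar J_1^X$ is carried by the bracketed series.

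The heart of the argument is to regroup that series by the image class $\beta'=\varphi_*\beta$. Since $J_1^X$ collects only classes with $\varphi_*\beta\neq 0$, and since the exact sequence forces $b_{r+1},\dots,b_{r+m}$ to span $\ker\varphi_*$ while $\varphi_*b_1,\dots,\varphi_*b_r$ form a basis of $H_2(Y)$, the first $r$ coordinates $c_1,\dots,c_r$ of $\beta=\sum_i c_i b_i$ are determined by $\beta'$ alone; only the last $m$ coordinates $c_{r+1},\dots,c_{r+m}$ vary within a fixed fiber. By \Cref{lir}(3) each fiber $\Lambda_{\beta'}=\{\beta:\varphi_*\beta=\beta'\}$ is finite, so the contribution of a fixed $\beta'\neq 0$ is
\[
\hat q^{\beta'}\sum_{\beta\in\Lambda_{\beta'}}\Big(\prod_{i>r}q_i^{c_i}\Big)N^X_\beta\big(z^{-2}[\beta]-2z^{-3}[pt]_X\big),
\]
where $\hat q^{\beta'}=\prod_{i\le r}q_i^{c_i}$ is a single monomial in $q_1,\dots,q_r$ common to the whole fiber, and the inner sum is a genuine polynomial in $q_{r+1},\dots,q_{r+m}$ (the exponents $c_i$ being nonnegative for effective $\beta$). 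Hence, up to the prefactor, $J_1^X$ is a power series in $q_1,\dots,q_r$ whose coefficients are polynomials — in particular entire functions — in $q_{r+1},\dots,q_{r+m}$. Such a series has no branch locus in the $q_{>r}$ directions, so its analytic continuation is unchanged around any loop encircling $E$; this is exactly the asserted triviality of the $E$-monodromy.

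The step I expect to be the main obstacle is the analytic one: to pass from ``each $\beta'$-group is a polynomial'' to ``the regrouped series defines a single-valued holomorphic function on a neighborhood of $E$,'' I must control the degrees and coefficient sizes of the polynomials attached to each $\beta'$ as $\beta'$ grows, so that the outer power series in $q_1,\dots,q_r$ converges uniformly while $q_{r+1},\dots,q_{r+m}$ range over a compact set containing $1$. I would derive these bounds from the convergence of the full $J^X$ together with the finiteness of the fibers, or alternatively invoke \Cref{limJ}, whose finite limit as $q_i\to 1$ already certifies that no branch point forms along $E$. Finally, the contrast with $J_2^X$ is instructive and worth recording: there the relevant fiber is $\{\beta:\varphi_*\beta=0\}$, which is \emph{infinite}, and summation over the multiple covers $nE_i$ produces the polylogarithms $\Li_2(q^{E_i})$ and $\Li_3(q^{E_i})$, whose branch points at $q^{E_i}=1$ are precisely what makes the $E$-monodromy of $\bar J_2^X$ nontrivial.
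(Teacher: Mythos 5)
Your proof is correct and follows essentially the same route as the paper's: the paper likewise invokes the finiteness of the fibers $\Lambda_{\beta'}$ from \Cref{lir}(3) to conclude that, for fixed $q_1,\dots,q_r$, the two sums in $J_1^X$ depend polynomially on $q_{r+1},\dots,q_{r+m}$ and hence have trivial $E$-monodromy. Your additional care with the prefactor, the regrouping by $\beta'$, and the convergence near $E$ only fills in details the paper leaves implicit.
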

\begin{proof}
	This follows from the fact that the set $\Lambda=\{\beta\in NE(X):\varphi_*(\beta)=\beta'\}$ is finite (in \Cref{lir}). For fixed $q_1,\cdots,q_r$, both terms 
	$\sum_{\varphi_*(\beta)\neq 0} q^\beta N_\beta^P[\beta]$ and  $\sum_{\varphi_*(\beta)\neq 0} q^\beta N_\beta^P[pt]$ are polynomials in $q_{r+1},\cdots,q_{r+m}$, thus admitting trivial $E$-monodromy after performing the analytic continuation.
	
\end{proof}
\begin{remark}\label{maxm}
	We note that $\bar J_2^X(q,z^{-1})$ has nontrivial monodromy. This follows from the fact that the polylogarithm function
	\[\Li_s(q)=\sum_{n=1}^\infty \frac{q^n}{n^s},\qquad s=2,3,\]
	is analytic in $|q|<1$ and branched at $q=1$ with the monodromy operator $M_1$ around $q=1$ given by
	\[{M}_1(\Li_s(q))=\Li_s(q)+\frac{2\pi i}{\Gamma(s)}\log^{s-1}(q).\]
	This has been computed in \cite{brown2013iterated}. 
\end{remark}
Now we are in a position to reformulate Li-Ruan's result (\Cref{lir}) as follows.
\begin{thm}
	For the conifold transition of Calabi-Yau 3-folds $X$ to $Y$, one may analytically continue the quantum $D$-module ${\cH}(X)$ to obtain a $D$-module $\bar {\cH}(X)$ over $U$, let $E$ be the transition locus $q_{r+1}=\cdots=q_{r+m}=1$, then there is a submodule $\bar{\cH}^E(X)\subseteq \bar {\cH}(X)$ which has maximal trivial E-monodromy such that
	\[{\bar{\cH}^E(X)|_E \simeq \cH}(Y),\]
	Here $\bar{\cH}^E(X)|_E$ is the restriction of $\bar{\cH}^E(X)$ to the transition locus $E$. 
\end{thm}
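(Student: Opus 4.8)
The plan is to pass freely between the $D$-module $\bar\cH(X)$ and its local system of flat sections, and to extract the maximal trivial-$E$-monodromy submodule as the kernel of an endomorphism. Since the log-derivations $z\delta_{q_i}=zq_i\partial_{q_i}$ that generate $\DD_q$ are single-valued, the operator $M_E$ obtained by continuing flat sections once around $E$ (keeping $q_1,\dots,q_r$ fixed) commutes with the $\DD_q$-action, hence is a $\DD_q$-automorphism of $\bar\cH(X)$. I would therefore simply set
\[\bar\cH^E(X):=\ker\bigl(M_E-\mathrm{id}\colon\bar\cH(X)\to\bar\cH(X)\bigr).\]
As the kernel of a module endomorphism this is a submodule, and it contains every submodule on which $M_E$ acts as the identity; so it is automatically the maximal submodule with trivial $E$-monodromy, settling that clause for free.

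The crux is then to identify $\ker(M_E-\mathrm{id})$ concretely with the part generated by $\bar J_1^X$. \Cref{trivialm} gives that $\bar J_1^X$ is $M_E$-invariant, so everything it generates lies in the kernel. For the reverse containment I would appeal to the explicit monodromy of \Cref{maxm}: continuation of $\bar J_2^X$ around $E$ adds to each $\Li_s(q^{E_i})$ a term proportional to $\log^{s-1}(q^{E_i})$, and these logarithms are linearly independent over the ring of single-valued coefficients, so no nonzero $\DD_q$-combination of derivatives of $\bar J_2^X$ can be monodromy-invariant. Thus $M_E-\mathrm{id}$ is injective on the $\bar J_2^X$-part, and in the local system the invariants are exactly those framed by $\bar J_1^X$; equivalently $\bar\cH^E(X)$ is the submodule generated by $\bar J_1^X$. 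A by-product I will use next is that $\bar J_1^X$ is holomorphic along $E$, being polynomial in $q_{r+1},\dots,q_{r+m}$ by \Cref{trivialm}, whereas $\bar J_2^X$ has genuine logarithmic singularities there; this is exactly why restriction to $E$ becomes legitimate only after passing to $\bar\cH^E(X)$.

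Finally I would restrict to $E=\{q_{r+1}=\dots=q_{r+m}=1\}$ and build the isomorphism with $\cH(Y)$. Regularity of $\bar J_1^X$ along $E$ makes $\bar\cH^E(X)|_E$ cyclic on $\bar J_1^X|_E$. Applying the degree-preserving map $L\colon H^*(X)\to H^*(Y)$ and invoking \Cref{limJ} yields $L\circ\bar J_1^X|_E=J^Y(\tilde q,z^{-1})$ after the substitution $\tilde q_i\mapsto q_i$ for $i\le r$; the prefactors $q_j^{[b_j]/z}$ with $j>r$ become trivial at $q_j=1$, while $L$ sends $\prod_{i\le r}q_i^{[b_i]/z}$ to $\prod_{i\le r}\tilde q_i^{[\varphi_* b_i]/z}$, matching the prefactor of $J^Y$. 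Because $L$ is a constant linear map it commutes with the surviving $z\delta_{q_i}$ ($i\le r$), which correspond to $z\delta_{\tilde q_i}$; hence $L$ intertwines the two $\DD$-actions and carries generator to generator. To promote this to an isomorphism I would match Picard-Fuchs ideals: using $L[\beta]=[\varphi_*\beta]$, $L[pt]_X=[pt]_Y$ and Li-Ruan's identity $\sum_{\beta\in\Lambda}N^X_\beta=N^Y_{\beta'}$ from \Cref{lir}, the annihilator of $J^Y$ pulls back to that of $\bar J_1^X|_E$, and since every class occurring in $\bar J_1^X|_E$ has $\varphi_*\beta\neq0$, on which $L$ is injective, one gets $\ker L\cap\bar\cH^E(X)|_E=0$ and hence an isomorphism.

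I expect the genuine obstacle to be the middle step: showing that the logarithmic monodromy carried by the polylogarithms in $\bar J_2^X$ can never be cancelled by a differential operator, so that $\ker(M_E-\mathrm{id})$ is neither larger nor smaller than the $\bar J_1^X$-submodule. Once this is secured, the restriction step is essentially bookkeeping built on \Cref{lir} and \Cref{limJ}; the only point there requiring care is the regularity of $\bar J_1^X$ along $E$, which is what makes the restriction functor applicable in the first place.
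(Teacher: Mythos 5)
Your argument is correct and follows the paper's own route: the same decomposition $J^X=J_1^X+J_2^X$ from \Cref{limJ}, triviality of the $E$-monodromy of $\bar J_1^X$ (\Cref{trivialm}), non-triviality for the $\bar J_2^X$ part via the polylogarithm monodromy (\Cref{maxm}), and restriction to $E$ realized as the limit $q_i\to 1$ combined with Li--Ruan's identity from \Cref{lir}. One caution: taken literally, your claim that ``no nonzero $\DD_q$-combination of derivatives of $\bar J_2^X$ can be monodromy-invariant'' fails in the infinite cyclic module (e.g.\ $(z\delta_q)^2\Li_2(q)=z^2q/(1-q)$ is single-valued), but the statement you actually need --- and the one the paper implicitly uses --- is injectivity of $M_E-\mathrm{id}$ on the finite-dimensional span of the components of $\bar J_2^X$ inside the solution local system, which your observation about the linear independence of the logarithmic terms does establish.
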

\begin{proof}
	By definition, ${\cH}(X)$ is identified with the $\DD_q$-module generated by $J^X(q,z^{-1})$. After performing the analytic continuation, we see that $\bar J^X_1(q,z^{-1})$ has trivial monodromy around $E$. Let $\bar{\cH}^E(X)$ be the sub-local system attached to $\bar J^X_1(q,z^{-1})$. After performing the analytic continuation, by \Cref{trivialm} and \Cref{maxm}, we see that $\bar{\cH}^E(X)$ is the submodule of $\bar {\cH}(X)$ with maximum trivial $E$-monodromy. Restriction this to $E$ amounts to taking the limit $q_i\to 1$ ($i=r+1,\cdots,r+m$). It follows from \Cref{limJ} that ${\bar{\cH}^E(X)|_E \simeq \cH}(Y)$.
\end{proof}

\section{The local model}

In this section, we will study the local model of cubic extremal transitions. Let $V$ be a Calabi-Yau 3-fold that contains a smooth cubic surface $S$. Assume that the rational curves on $S$ generate an extremal ray of the Mori cone of $V$, then we can birationally contract $S$ to a point to obtain a singular Calabi-Yau 3-fold $\bar{V}$ with local singularity given by 
\[x^3+y^3+z^3+u^3=0.\]
To smooth out the singularity, we consider the following deformation of the above equation
\[x^3+y^3+z^3+u^3=t\quad (t\neq 0).\]
This is a local surgery. According to Gross' work \cite{MR1464900}, this surgery can be done globally. In other words, there is a Calabi-Yau 3-fold $\widetilde{V}$ obtained by this smoothing. The process of going from $V$ to $\tilde{V}$ is called a {\em cubic extremal transition}.

If we only consider a small neighborhood where this surgery takes place, we obtain the {\em local model}. Viewing $S$ as a smooth cubic surface in $\PP^3$, we close up a neighborhood of $S$ and then define
\[X:=\PP(N_{S/V}\oplus O)=\PP(K_S\oplus O).\]
The second equality follows from $K_V\simeq \cO_V$ and the adjunction formula.

Smoothing out the cubic singularity gives a cubic 3-fold in $\PP^4$, i.e.
\[Y=\{x^3+y^3+z^3+u^3=tv^3\}\subseteq \bb P^4.\]
We note first that both $X$ and $Y$ can be naturally embedded into smooth projective toric varieties. Indeed, for $X$, by adjunction formula
\[K_S=K_{\bb P^3}\otimes O_{\bb P^3}(S)|_S=O_{\bb P^3}(-1)|_S,\]
so we have the following diagram
\begin{center}
	\begin{tikzcd}
	X\arrow{d}\arrow[hookrightarrow]{r}{i_X} & \bb P(O_{\bb P^3}(-1)\oplus O_{\bb P^3})\arrow[d,"\pi"] \\
	S\arrow[hookrightarrow]{r} & \bb P^3
	\end{tikzcd}
\end{center}
and $X$ is the vanishing locus of a section of $\pi^*O_{\bb P^3}(3)$. Moreover, $Y$ is a hypersurface defined by a section of $O_{\bb P^4}(3)$
\begin{center}
	\begin{tikzcd}
	& O_{\bb P^4}(3) \arrow[d] &\\
	Y\arrow[hookrightarrow, r,"i_Y"] & \bb P^4 
	\end{tikzcd}
\end{center}
\subsection{The quantum $D$-modules}
Let $\cH(X)$ (resp. $\cH(Y)$) be the ambient part quantum $D$-module of $X$ (resp. $Y$). To give an explicit description of the quantum $D$-modules, we introduce the following notations:
\begin{notation} $\cO_{\widetilde{X}}(1)$ is the anti-tautological line bundle over $\widetilde{X}=\PP(\cO_{\PP^3}\oplus\cO_{\PP^3}(-1))$. 
	\begin{itemize}
		\item $h:=c_1(\pi^*\cO_{\PP^3}(1))$, $\xi:=c_1(\cO_{\widetilde{X}}(1))$. 
		\item $p:=c_1(\cO_{\PP^4}(1))$.
	\end{itemize}
	By toric geometry, we know that $H^*(\widetilde X)$ is generated by $h,\xi$, so we use $q_1$ and $q_2$ as small parameters for $X$, which correspond to $h$ and $\xi$, respectively. On the other hand, the cohomology of $\widetilde Y:=\PP^4$ is generated by $p$, and $\dim H^2(Y)=\dim H^2(\widetilde Y)=1$, so we use $y$ as the small parameter for $Y$, which corresponds to $p$.
	We also introduce the following differential operators:
	\[z\delta_{q_1}:=zq_1\frac\partial{\partial q_1},\qquad z\delta_{q_2}:=zq_2\frac\partial{\partial q_2},\qquad z\delta_y=zy\frac\partial{\partial y}.\]
\end{notation}
The small $I$-function for $X$ is the following
\[I^X(q_1,q_2): =(3h)q_1^{h/z}q_2^{\xi/z}\sum_{(d_1,d_2)\in\NN^2} q_1^{d_1}q_2^{d_2}\frac{\prod^0\limits_{m=-\infty} (\xi-h+mz)\prod\limits_{m=1}^{3d_1}(3h+mz)}{\prod^{d_1}\limits_{m=1}(h+mz)^4\prod^{d_2}\limits_{m=1}(\xi+mz)\prod^{d_2-d_1}\limits_{m=-\infty}(\xi-h+mz)},\]
subject to the relation $h^4=h\xi-\xi^2=0$.

On the other hand, the small $I$-function for $Y$ is the following
\[I^Y(y)=(3p)y^{p/z}\sum_{d\in \NN} \frac{\prod\limits_{m=1}^{3d}(3p+mz)}{\prod\limits_{m=1}^{d}(p+mz)^5},\]
subject to the relation $p^5=0$.

According to Remark 2.5, the ambient quantum $D$-module $\cH(X)$ for $X$ may be identified with the cyclic $\DD_q$-module generated by $I^X(q_1,q_2)$, and $\cH(Y)$ may be identified with the cyclic $\DD_y$-module generated by $I^Y(y)$. Our goal is to study the relation between $\cH(X)$ and $\cH(Y)$. 

\begin{lemma}
	The Picard-Fuchs ideal associated to $I^X(q_1,q_2)$ is generated by $\triangle_1$ and $\triangle_2$, where
	\[\displaystyle \triangle_1=\left(z\delta_{q_1}\right)^3-3q_1\left(3z\delta_{q_1}+z\right)\left(3z\delta_{q_1}+2z\right)\left(z\delta_{q_2}-z\delta_{q_1}\right),\]
	\[\displaystyle \triangle_2=\left(z\delta_{q_2}\right)\left(z\delta_{q_2}-z\delta_{q_1}\right)-q_2.\] 
	In other words, the components of $I^X$ gives a full basis of solutions to $\triangle_1I=\triangle_2I=0$ at any point near the origin in $(\CC^*)^2$.
\end{lemma}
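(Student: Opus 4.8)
The plan is to prove the statement in two stages. First I would verify the containment $(\triangle_1,\triangle_2)\subseteq\mathrm{Ann}(I^X)$, i.e. that both operators kill $I^X$; this is the hypergeometric recursion built into the explicit $I$-function. Second, and this is where the real content lies, I would show that $(\triangle_1,\triangle_2)$ is not merely contained in the Picard-Fuchs ideal but generates it, by proving that the holonomic rank of the system $\{\triangle_1 I=\triangle_2 I=0\}$ equals the number of linearly independent components of $I^X$. Since every component is automatically a solution once the first stage is complete, matching the rank forces the components to be a full basis, and forces the two ideals to coincide.

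For the first stage I would write $I^X = 3h\sum_{(d_1,d_2)\in\NN^2} c_{d_1,d_2}\,q_1^{d_1+h/z}q_2^{d_2+\xi/z}$ with $c_{d_1,d_2}$ the displayed coefficient, and observe that $z\delta_{q_1}$ (resp. $z\delta_{q_2}$) acts on the $(d_1,d_2)$-monomial as multiplication by $a:=zd_1+h$ (resp. $b:=zd_2+\xi$), while $q_i$ shifts the index by one. Collecting the coefficient of a fixed monomial then turns $\triangle_2 I^X=0$ into the single recursion $c_{d_1,d_2}\,b\,(b-a)=c_{d_1,d_2-1}$, which is exactly the ratio of the $d_2$-dependent denominator factors $(\xi+d_2z)$ and $(\xi-h+(d_2-d_1)z)$; likewise $\triangle_1 I^X=0$ reduces to $c_{d_1,d_2}\,a^3=3\,c_{d_1-1,d_2}(3a-2z)(3a-z)(b-a+z)$, which is precisely $c_{d_1,d_2}/c_{d_1-1,d_2}$ after rewriting $3h+3d_1z=3a$, $h+d_1z=a$, and reading off the single factor $(\xi-h+(d_2-d_1+1)z)=b-a+z$ picked up from the semi-infinite product. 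The genuinely delicate points are the two boundary terms: at $d_1=0$ the operator $\triangle_1$ leaves the term $3h\cdot c_{0,d_2}h^3=3\,c_{0,d_2}h^4$, which dies by $h^4=0$; at $d_2=0$ the operator $\triangle_2$ leaves $3h\cdot c_{d_1,0}\,\xi(\xi-a)$, and here one must note that the $m=0$ factor of the semi-infinite product $\prod_{m=-\infty}^{0}(\xi-h+mz)$ forces a factor $(\xi-h)$ into $c_{d_1,0}$, so that this term vanishes by the relation $\xi(\xi-h)=\xi^2-h\xi=0$. Thus both cohomology relations and the regularization of the semi-infinite products are essential.

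For the second stage I first bound the solution space from below. The prefactor $3h$ forces $I^X$ to take values in the principal ideal $(h)\subset H^*(\widetilde X)=\CC[h,\xi]/(h^4,\ \xi^2-h\xi)$, which is $6$-dimensional (spanned by $h,h^2,h^3,h\xi,h^2\xi,h^3\xi$; only $h^3$ and $h^3\xi$ are annihilated by $h$). Expanding $q_1^{h/z}q_2^{\xi/z}$ and reducing modulo the relations produces six coefficient functions that are polynomials in $\log q_1,\log q_2$ of distinct shapes, hence linearly independent over $\CC$, so the solution space has dimension at least $6$. For the matching upper bound I would compute the holonomic rank from the principal symbols. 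Writing $\delta_{q_i}=q_i\partial_{q_i}$ and treating $z$ as a nonzero constant, the top-order symbols in the cotangent coordinates $(\eta_1,\eta_2)$ are
\[
\sigma(\triangle_2)=q_2\eta_2\,(q_2\eta_2-q_1\eta_1),\qquad \sigma(\triangle_1)=q_1^3\eta_1^2\bigl[(1+27q_1)\eta_1-27\,q_2\eta_2\bigr].
\]
On a generic cotangent fiber these are homogeneous in $(\eta_1,\eta_2)$ of degrees $2$ and $3$, and a short slope check shows their linear factors never coincide for $q$ near the origin (where $1+27q_1\neq 0$). Hence they form a regular sequence, the scheme-theoretic fiber has length $\deg\sigma(\triangle_2)\cdot\deg\sigma(\triangle_1)=2\cdot 3=6$, the characteristic variety is the zero section with multiplicity $6$, and the system is holonomic of rank $6$. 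Combining the two bounds, the solution space is exactly $6$-dimensional, the six components of $I^X$ form a full basis, and $(\triangle_1,\triangle_2)=\mathrm{Ann}(I^X)$.

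I expect the main obstacle to be precisely this upper bound on the rank: the annihilation identities are mechanical once the recursions are set up, but concluding that $(\triangle_1,\triangle_2)$ \emph{exhausts} the Picard-Fuchs ideal, rather than merely sitting inside it, requires the holonomicity and complete-intersection property of the symbols. In particular one must confirm that the symbol ideal has no common component over the relevant punctured neighborhood of the origin, since it is exactly this that pins the generic fiber length to $6$ rather than allowing an extra component of the characteristic variety to inflate the rank. A secondary, more technical hazard is the careful bookkeeping of the semi-infinite products in the first stage, which is where the two defining cohomology relations are consumed.
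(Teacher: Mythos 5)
Your proposal is correct and follows essentially the same route as the paper's proof: verify that $\triangle_1,\triangle_2$ annihilate $I^X$ via the coefficient recursions (with the boundary terms killed by $h^4=\xi(\xi-h)=0$), observe that the six components are linearly independent from their initial terms, and match this against an upper bound of $6$ from a holonomic rank computation. The only difference is one of detail — the paper merely asserts the rank bound, whereas you carry it out explicitly via the principal symbols and a Bezout count — and your recursions and symbol factorizations check out.
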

\begin{proof}
	By a ratio test, we find that $I^X(q_1,q_2)$ is holomorphic when $(q_1,q_2)\in (\CC^*)^2$ is sufficiently close to the origin. It is straightforward to check that $\triangle_1$, $\triangle_2$ annihilates $I^X(q_1,q_2)$: if we write $I^X(q_1,q_2)$ in the following way
	\[I^X(q_1,q_2)=(3h)q_1^{h/z}q_2^{\xi/z}\sum_{(d_1,d_2)\in\NN^2} q_1^{d_1}q_2^{d_2}A_{d_1,d_2},\]
	then the differential equation $ \triangle_1I= 0$ (resp. $\triangle_2I=0$) follows from the recursion relation between $A_{d_1,d_2}$ and $A_{d_1-1,d_2}$ (resp. $A_{d_1,d_2-1}$), and the cohomology relation $h^4=\xi(\xi-h)=0$ is equivalent to the fact that $A_{d_1,d_2}=0$ for $d_1<0$ or $d_2<0$. Moreover, we see that the 6 components of $I^X(q_1,q_2)$ are linearly independent due to their initial terms, but the differential equation system has at most 6-dimensional solution space by a holonomic rank computation. So we obtain a full basis to the differential equation system, and the lemma is proved.
\end{proof}

We note that $I^X$ involves two small parameters $q_1$ and $q_2$, whereas $I^Y$ involves only a single small parameter $y$. To compare them, we introduce the following auxiliary function in two variables $x$ and $y$.
\[\bar I^Y(x,y):=(3p)y^{p/z}\sum_{\substack{i\geqslant 0\\ j\geqslant 0}}x^iy^j\frac{\prod\limits_{m=-\infty}^{0}(p+mz)^4\prod\limits_{m=-\infty}^{3j-3i}(3p+mz)}{\prod\limits_{m=-\infty}^{j-i}(p+mz)^4\prod\limits_{m=1}^{j}(p+mz)\prod\limits_{m=1}^{i}(mz)\prod\limits_{m=-\infty}^{0}(3p+mz)},\]
subject to the relation $p^5=0$. This may be viewed as an extension of $I^Y(y)$ in the following sense
\begin{lemma}\label{restrict}
	$\bar I^Y(x,y)$ has trivial monodromy around $x=0$ and we have
	\[ \lim_{x\to 0} \bar I^Y(x,y)=I^Y(y).\]
\end{lemma}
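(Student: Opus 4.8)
The plan is to show that, as a function of $x$, the series $\bar I^Y(x,y)$ is a genuine single-valued holomorphic power series near $x=0$ whose $x$-constant term reproduces $I^Y(y)$ coefficient by coefficient in $y$. Both assertions will follow once the coefficient of $x^iy^j$ is put into closed form using the standard convention for the semi-infinite products, namely $\prod_{m=-\infty}^{a}(c+mz)\big/\prod_{m=-\infty}^{b}(c+mz)=\prod_{m=b+1}^{a}(c+mz)$ for $a\geqslant b$, and its reciprocal for $a<b$, everything being computed modulo $p^5=0$.

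First I would isolate the only source of multivaluedness. The prefactor $y^{p/z}=\exp((p/z)\log y)$ is the unique branched factor and involves $\log y$ only; the variable $x$ enters the summand solely through the integer power $x^i$ with $i\geqslant 0$ and through the factor $\prod_{m=1}^{i}(mz)=i!\,z^i$. Hence, for fixed $y$, no $\log x$ and no fractional power of $x$ can appear, so the series is formally single-valued in $x$. To promote this to an honest monodromy statement I would confirm a positive radius of convergence near $x=0$ by a ratio test on the coefficients of $x^iy^j$, exactly as in the proof that $I^X$ is holomorphic near the origin. This shows $\bar I^Y(\cdot,y)$ is holomorphic in a neighborhood of $x=0$, whence its $x$-monodromy is trivial.

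Next I would compute the limit. Since the series converges uniformly on a neighborhood of $x=0$, letting $x\to 0$ selects precisely the $i=0$ terms, with every $i\geqslant 1$ term killed by the factor $x^i$. Setting $i=0$, the factor $\prod_{m=1}^{0}(mz)$ is an empty product equal to $1$, the ratio $\prod_{m=-\infty}^{0}(p+mz)^4\big/\prod_{m=-\infty}^{j}(p+mz)^4$ simplifies to $1/\prod_{m=1}^{j}(p+mz)^4$, and $\prod_{m=-\infty}^{3j}(3p+mz)\big/\prod_{m=-\infty}^{0}(3p+mz)$ simplifies to $\prod_{m=1}^{3j}(3p+mz)$. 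Collecting these with the remaining denominator $\prod_{m=1}^{j}(p+mz)$ gives the coefficient $\prod_{m=1}^{3j}(3p+mz)\big/\prod_{m=1}^{j}(p+mz)^5$ of $y^j$, which is exactly the coefficient appearing in $I^Y(y)$; thus $\lim_{x\to0}\bar I^Y(x,y)=I^Y(y)$.

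The main obstacle is bookkeeping rather than conceptual: one must fix and apply the semi-infinite product convention consistently, tracking the two regimes $j\geqslant i$ and $j<i$ when simplifying the general coefficient, and one must verify that the ratio test yields a positive radius of convergence in $x$ so that the interchange of limit and summation—and hence the passage from "formally single-valued" to "trivial monodromy"—is legitimate. Once the convention is pinned down, both the monodromy claim and the limit reduce to the elementary term-by-term identifications above.
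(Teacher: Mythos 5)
Your argument is correct and follows essentially the same route as the paper: the paper likewise observes that the only branched factor is the prefactor $y^{p/z}$, which is independent of $x$ (so the $x$-monodromy is trivial), and then notes that the limit $x\to 0$ is a direct check, which you carry out explicitly by isolating the $i=0$ terms and simplifying the semi-infinite products to recover the coefficients of $I^Y(y)$. Your version simply supplies the bookkeeping that the paper leaves as ``straightforward to check.''
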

\begin{proof}
	Since the initial term $y^{p/z}$ does not involve $x$, the monodromy around $x=0$ is trivial. The second part is straightforward to check.
\end{proof}    

In a similar fashion, we obtain the partial differential equation satisfied by $\bar{I^Y}(x,y)$ by studying the recursion relation in their coefficients of $x^iy^j$. However, the natural partial differential equation system attached to $\bar{I^Y}(x,y)$ has 6-dimensional solution space, but the components of $\bar{I^Y}(x,y)$ give only 4 linearly independent solutions. We introduce another two hypergeometric series as follows.

\[I_5(x,y)=x^{\frac13}\sum_{i\geqslant j\geqslant 0}x^iy^j\frac{(-1)^{i-j}\Gamma(\frac13+i-j)^4}{\Gamma(\frac 43+i)\Gamma(3i-3j+1)\Gamma(1+j)z^{2j}},\]
\[I_6(x,y)=x^{\frac 23}\sum_{i\geqslant j\geqslant 0}x^iy^j\frac{(-1)^{i-j}\Gamma(\frac23+i-j)^4}{\Gamma(\frac 53+i)\Gamma(3i-3j+2)\Gamma(1+j)z^{2j}}.\]

\begin{lemma}
	The function $\bar I^Y(x,y)$ is analytic at any point near the origin in $(\CC^*)^2$, and can be annihilated by the following differential operators:
	\[\triangle'_1:= x\left(z\delta_{y}-z\delta_{x}\right)^3-3\left(3(z\delta_{y}-z\delta_{x})+z\right)\left(3(z\delta_{y}-z\delta_{x})+2z\right)\left(z\delta_{x}\right),\]
	\[\triangle'_2:= \left(z\delta_{y}\right)\left(z\delta_{x}\right)-xy.\]
	The components of $\bar I^Y(q_1,q_2)$, together with $I_5$ and $I_6$, give a full basis of solutions to the differential equation system $ \triangle'_1I= \triangle'_2I=0$ at any point near the origin in $(\CC^*)^2$.
\end{lemma}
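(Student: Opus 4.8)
The plan is to follow the same three-step pattern used in the preceding lemma for $I^X$: establish convergence by a ratio test, verify the annihilation by $\triangle'_1$ and $\triangle'_2$ through recursions among the series coefficients, and then pin down the $6$-dimensional solution space by a holonomic rank computation together with an explicit list of six independent solutions. First I would write $\bar I^Y(x,y)=(3p)\,y^{p/z}\sum_{i,j\geqslant 0}x^iy^jB_{i,j}$, where $B_{i,j}$ is the regularized product appearing in the definition, and apply the ratio test to $B_{i,j}$ to conclude analyticity on a punctured neighborhood of the origin in $(\CC^*)^2$.

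For the annihilation I would read the two recursions directly off the shape of $B_{i,j}$. Acting on $x^iy^jy^{p/z}$ one has $z\delta_x\mapsto iz$, $z\delta_y\mapsto p+jz$ and $z\delta_y-z\delta_x\mapsto p+(j-i)z$, so that $\triangle'_2\bar I^Y=0$ becomes $(p+jz)(iz)B_{i,j}=B_{i-1,j-1}$, while $\triangle'_1\bar I^Y=0$ becomes
\[(p+(j-i+1)z)^3B_{i-1,j}=3\bigl(3p+(3(j-i)+1)z\bigr)\bigl(3p+(3(j-i)+2)z\bigr)(iz)\,B_{i,j}.\]
Both identities follow by cancelling the products defining $B_{i,j}$; the crucial simplification is $3p+3(j-i+1)z=3\bigl(p+(j-i+1)z\bigr)$, which reconciles the cubic power of $(p+(j-i+1)z)$ in $\triangle'_1$ with the fourth power occurring in $B_{i,j}$. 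As in the $I^X$-computation, the regularized infinite products are interpreted so that the cohomology relation $p^5=0$ forces the boundary coefficients to vanish, which is what validates these recursions at the edges $i=0$, $j=0$ and across the wall $j=i$.

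For the dimension count I would exploit the change of variables from Theorem~B, namely $q_1=x^{-1}$, $q_2=xy$. A direct computation gives $z\delta_{q_1}\mapsto z\delta_y-z\delta_x$ and $z\delta_{q_2}\mapsto z\delta_y$, under which $\triangle_1$ and $\triangle_2$ are carried, after clearing the unit $x^{-1}$, exactly to $\triangle'_1$ and $\triangle'_2$. Since $(q_1,q_2)\mapsto(x,y)$ is a biholomorphism of $(\CC^*)^2$, the holonomic rank is preserved, so the solution space of $\triangle'_1I=\triangle'_2I=0$ is again $6$-dimensional. It then remains to exhibit six independent solutions. The prefactor $(3p)$ together with $p^5=0$ kills the degree-zero component of $\bar I^Y$, leaving exactly four nonzero components (the coefficients of $p,p^2,p^3,p^4$); these are linearly independent because the factor $y^{p/z}$ contributes the distinct functions $1,\log y,\log^2 y,\log^3 y$ to their leading terms. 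The remaining two solutions are $I_5$ and $I_6$, which I would check solve the system by the same type of $\Gamma$-value recursion. Their fractional prefactors $x^{1/3}$ and $x^{2/3}$ give nontrivial monodromy about $x=0$ with eigenvalues $e^{2\pi i/3}$ and $e^{4\pi i/3}$, whereas the four components of $\bar I^Y$ carry only integer powers of $x$ and hence trivial $x$-monodromy (cf.\ \Cref{restrict}); this separates all six solutions, so they form a basis.

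The main obstacle I anticipate is the verification that $I_5$ and $I_6$ actually lie in the solution space: the summation range $i\geqslant j\geqslant 0$ and the quotients of $\Gamma$-values force the recursion to be checked carefully across the wall $j=i$, where the shifts governing $\triangle'_1$ and $\triangle'_2$ alter the balance of Gamma factors. A cleaner route, which I would pursue if the direct check becomes cumbersome, is to identify $I_5$ and $I_6$ with the analytic continuations to $x=0$ (equivalently $q_1=\infty$) of the two components of $\bar I^X$ that acquire fractional exponents; the coordinate-change computation above already guarantees that any such continued solution satisfies $\triangle'_1I=\triangle'_2I=0$, so they would inherit the annihilation property automatically.
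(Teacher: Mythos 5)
The paper omits this proof entirely, stating only that it is ``parallel to that of Lemma 3.2'' (ratio test for analyticity, coefficient recursions for the annihilation, linear independence of the components plus a holonomic rank bound for the basis claim); your proposal is a correct and complete execution of exactly that argument, including the right recursions, the key cancellation $3p+3(j-i+1)z=3(p+(j-i+1)z)$, and the boundary/wall checks. The only (harmless) deviation is that you obtain the rank-$6$ bound for $\{\triangle_1'I=\triangle_2'I=0\}$ by transporting the bound for $\{\triangle_1 I=\triangle_2 I=0\}$ through the change of variables $x\mapsto q_1^{-1}$, $y\mapsto q_1q_2$ rather than computing it directly, which is logically sound since that change of variables does not depend on this lemma.
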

\begin{proof}
	The proof is parallel to that of Lemma 3.2, so we omit it.
\end{proof}
We notice that the differential equation systems $\{\triangle_1I=\triangle_2I=0\}$ and $\{\triangle'_1I=\triangle'_2I=0\}$ both have 6 dimensional solution spaces. Under an appropriate change of variable, these two systems are indeed equivalent. Our key lemma is the following:
\begin{lemma}
	The change of variable $x\mapsto q_1^{-1}$ and $y\mapsto q_1q_2$ induces an equivalence between the differential equation systems $\{\triangle_1I=\triangle_2I=0\}$ and $\{\triangle'_1I=\triangle'_2I=0\}$.
\end{lemma}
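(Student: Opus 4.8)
The plan is to treat the prescribed substitution as a monomial change of variables and to track how it acts on the logarithmic derivatives $z\delta_{q_i}$ and on the monomial multipliers $q_1,q_2$. Since $x\mapsto q_1^{-1}$ and $y\mapsto q_1q_2$ are monomial, I would pass to logarithmic coordinates $\log q_1,\log q_2$ and $\log x,\log y$, under which the change of variables becomes affine-linear and the operators $z\delta$ transform with constant coefficients. From $q_1=x^{-1}$ and $q_2=xy$ the chain rule in logarithmic coordinates yields the dictionary
\[
z\delta_{q_1}=z\delta_{y}-z\delta_{x},\qquad z\delta_{q_2}=z\delta_{y},
\]
and hence $z\delta_{q_2}-z\delta_{q_1}=z\delta_{x}$, together with the multiplier identities $q_1=x^{-1}$ and $q_2=xy$. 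Deriving these four correspondences is the technical core of the argument, and everything else is bookkeeping.

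Next I would substitute this dictionary into the two generators. For $\triangle_2$ the computation is immediate:
\[
\triangle_2=(z\delta_{q_2})(z\delta_{q_2}-z\delta_{q_1})-q_2=(z\delta_{y})(z\delta_{x})-xy=\triangle'_2,
\]
so the second generator is carried \emph{exactly} onto $\triangle'_2$. For $\triangle_1$, substituting $z\delta_{q_1}=z\delta_{y}-z\delta_{x}$, $z\delta_{q_2}-z\delta_{q_1}=z\delta_{x}$, and $q_1=x^{-1}$ gives
\[
\triangle_1=(z\delta_{y}-z\delta_{x})^3-3\,x^{-1}\bigl(3(z\delta_{y}-z\delta_{x})+z\bigr)\bigl(3(z\delta_{y}-z\delta_{x})+2z\bigr)(z\delta_{x}),
\]
and I would then observe that this is precisely $x^{-1}\triangle'_1$, equivalently $\triangle'_1=x\,\triangle_1$, where $x$ denotes left multiplication applied after the differential operator.

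Finally I would conclude at the level of ideals. The coefficient ring of the $D$-module is the ring of Laurent polynomials on $(\CC^*)^2$, in which $x=q_1^{-1}$ is a unit; hence left multiplication by $x$ is invertible, and after the substitution the left ideals generated by $\{\triangle_1,\triangle_2\}$ and by $\{\triangle'_1,\triangle'_2\}$ coincide. This identifies the two Picard–Fuchs ideals, and therefore the two differential systems, as claimed; in particular the change of variables identifies their solution spaces, which are both of holonomic rank $6$ by the two preceding lemmas, providing a consistency check.

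The only point requiring care — and the natural place for an error to hide — is the non-commutativity between the monomial multipliers and the operators $z\delta$. The argument stays clean precisely because the offending factor of $x$ in $\triangle'_1$ sits on the outside (to the left) of the entire operator, matching the position of the prefactor $q_1=x^{-1}$ in $\triangle_1$; one should check explicitly that $x^{-1}\cdot x=1$ as multiplication operators and that no reordering of a $z\delta_{x}$ past a power of $x$ is ever required. I would also double-check the signs in the logarithmic chain-rule computation, since any error there would spoil the exact match $\triangle_2\mapsto\triangle'_2$, which is the most sensitive indicator that the dictionary is correct.
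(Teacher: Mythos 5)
Your proposal is correct and follows essentially the same route as the paper: derive the dictionary $z\delta_{q_1}=z\delta_y-z\delta_x$, $z\delta_{q_2}=z\delta_y$ (hence $z\delta_{q_2}-z\delta_{q_1}=z\delta_x$, $q_1=x^{-1}$, $q_2=xy$), observe that $\triangle_2$ maps exactly to $\triangle'_2$, and identify $\triangle_1$ with $x^{-1}\triangle'_1$ using that $x$ is invertible on $(\CC^*)^2$. Your extra remarks on the left position of the multiplier $x$ and the absence of any reordering past $z\delta_x$ simply make explicit what the paper's ``since $x\neq 0$ \dots term by term'' step asserts.
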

\begin{proof}
	The change of variable $x\mapsto q_1^{-1}$ and $t\mapsto q_1q_2$ yields the following relation of the differential operators:
	\[z\delta_{q_1}=z\delta_y-z\delta_x,\qquad z\delta_{q_2}=z\delta_y.\]
	It follows directly that $\triangle_2I=0$ is converted to $\triangle'_2I=0$ and vice versa. For $\triangle_1I=0$ and $\triangle'_1I=0$, one just has to notice that since $x\neq 0$, $\triangle'_1I=0$ is equivalent to
	\[\left[\left(\delta_{y}-\delta_{x}\right)^3-3x^{-1}\left(3(\delta_{y}-\delta_{x})+z\right)\left(3(\delta_{y}-\delta_{x})+2z\right)\left(\delta_{x}\right)\right]I=0,\]
	which is converted to $\triangle_1I=0$ term by term under the relation among the differential operators.
\end{proof}

\subsection{Analytic continuation of the $I$-function}
From Lemma 3.5, we see that the function $I^X(q_1,q_2)$ and $\bar I^Y(x,y)$ satisfies the same system of differential equations under an appropriate change of variable. Since they are both holomorphic on certain domains, it is expected that $\bar I^Y(x,y)$ may be obtained by analytic continuation of $I^X(q_1,q_2)$ followed by a linear transformation $L:H^*(\widetilde X)\to H^*(\widetilde Y)$. This subsection is devoted to working out this analytic continuation using Mellin-Barnes method. For a similar computation, we refer the reader to \cite{MR2672282}.

We will frequently use the following identity.
\begin{lemma}
	For any $a\in \ZZ$, we have
	\begin{equation*}
	\frac{\prod^a\limits_{m=-\infty}(u+mz)}{\prod^0\limits_{m=-\infty}(u+mz)}=\displaystyle\frac{z^a\Gamma\left(1+\displaystyle\frac uz+a\right)}{\Gamma\left(1+\displaystyle\frac uz\right)}.
	\end{equation*}
\end{lemma}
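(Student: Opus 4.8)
The plan is to recognize that although the two infinite products are only formal (Gamma-regularized) expressions individually, their \emph{ratio} is an honest finite product, and then to match it against the right-hand side using nothing more than the functional equation $\Gamma(w+1)=w\Gamma(w)$. The cleanest route is to check that both sides satisfy the same first-order recursion in the integer $a$ and agree at $a=0$.

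First I would set $F(a)$ to be the left-hand side and $G(a)$ the right-hand side, and note the trivial initial value $F(0)=G(0)=1$. For the recursion, shifting $a$ to $a+1$ in $F$ introduces exactly one extra factor, the one indexed by $m=a+1$, so $F(a+1)=(u+(a+1)z)\,F(a)$. On the right-hand side the functional equation gives $\Gamma(1+\tfrac uz+a+1)=(1+\tfrac uz+a)\,\Gamma(1+\tfrac uz+a)$, and together with the extra $z$ coming from $z^{a+1}/z^{a}$ this yields $G(a+1)=z(1+\tfrac uz+a)\,G(a)=(u+(a+1)z)\,G(a)$. Since $F$ and $G$ obey the same recursion with the same initial value, they agree for all $a\geq 0$ by induction, and dividing by $u+(a+1)z$ runs the recursion backwards to extend the identity to all $a\in\ZZ$.

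Alternatively, and perhaps more transparently, I would treat the cases $a\geq 0$ and $a<0$ separately and evaluate the telescoping ratio directly. For $a\geq 0$ the factors with $m\leq 0$ cancel, leaving $\prod_{m=1}^{a}(u+mz)$; rewriting each factor as $z(\tfrac uz+m)$ and using $\Gamma(1+\tfrac uz+a)/\Gamma(1+\tfrac uz)=\prod_{m=1}^{a}(\tfrac uz+m)$ reproduces the right-hand side once the overall $z^{a}$ is accounted for. For $a<0$ the denominator instead carries the surplus factors $\prod_{m=a+1}^{0}(u+mz)$, and the reciprocal form of the Gamma identity yields the same conclusion.

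The computation is entirely routine, so there is no genuine obstacle here; the only point requiring care is the bookkeeping of the sign of $a$ together with the powers of $z$, since each factor $(u+mz)$ carries one power of $z$ relative to the normalized factor $(\tfrac uz+m)$ appearing in the Gamma quotient. Verifying that the number of factors equals $|a|$ in both cases is precisely what guarantees the exponent $z^{a}$ comes out correctly.
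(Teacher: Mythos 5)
Your proof is correct: the recursion $F(a+1)=(u+(a+1)z)F(a)$ matched against $\Gamma(w+1)=w\Gamma(w)$, together with the initial value at $a=0$ and the backward extension to negative $a$, is exactly the right verification, and your bookkeeping of the $|a|$ factors and the power $z^{a}$ is accurate. The paper states this lemma without proof as a routine identity, so there is no alternative argument to compare against; your write-up simply supplies the standard check.
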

By this lemma, we can rewrite $I^X(q_1,q_2)$ and $\bar I^Y(x,y)$ in the following way.
\begin{align*}
I^X(q_1,q_2)&=3q_1^{\frac hz}q_2^{\frac \xi z}\cdot\frac{h\Gamma(1+\frac{\xi-h}z)\Gamma(1+\frac hz)^4\Gamma(1+\frac\xi z)}{\Gamma(1+\frac{3h}{z})}\cdot \\
&\quad\sum_{\substack{d_1\geqslant 0\\ d_2\geqslant 0}}q_1^{d_1}q_2^{d_2} \frac{\Gamma(1+\frac {3h}{z}+3d_1)}{\Gamma(1+\frac hz+d_1)^4\Gamma(1+\frac \xi z+d_2)\Gamma(1+\frac{\xi-h}{z}+(d_2-d_1))z^{2d_2}},\tag{3.1}
\end{align*}
subject to the relation $h^4=h\xi-\xi^2=0.$
\begin{equation*}
\bar I^Y(x,y)=\frac{(3p)y^{\frac pz}\Gamma(1+\frac pz)^5}{\Gamma(1+\frac {3p}{z})}\sum_{\substack{i\geqslant 0\\ j\geqslant 0}}x^iy^j \frac{\Gamma(1+\frac{3p}{z}+3j-3i)}{\Gamma(1+\frac pz+j-i)^4\Gamma(1+\frac pz+j)\Gamma(1+i)z^{2j}},\tag{3.2}
\end{equation*}
subject to relation $p^5=0.$
\begin{thm}\label{continue}
	One may analytically continue the function $I^X(q_1,q_2)$ to obtain a holomorphic function $\bar I^X(x,y)$ near the origin in $(\CC^*)^2$, where the change of variable is given by $x\mapsto q_1^{-1}$ and $y\mapsto q_1q_2$. There exists a degree-preserving linear transformation $L:H^*(\tilde X)\to H^*(\tilde Y)$ such that $I^Y(y)$ is recovered by
	\[I^Y(y)=\lim_{x\to 0} L\circ \bar I^X(x,y).\]
\end{thm}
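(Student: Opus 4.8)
The plan is to produce the continuation $\bar I^X$ concretely by the Mellin--Barnes method and then read off both the map $L$ and the $x\to 0$ behavior from the residue structure. The conceptual skeleton is already in place: by the change-of-variable lemma (Lemma 3.5) the substitution $x\mapsto q_1^{-1}$, $y\mapsto q_1q_2$ carries the Picard--Fuchs system of $I^X$ to $\{\triangle'_1 I=\triangle'_2 I=0\}$, whose $6$-dimensional solution space (Lemma 3.4) is spanned by the components of $\bar I^Y(x,y)$ together with $I_5(x,y)$ and $I_6(x,y)$. Hence every analytic continuation of $I^X$ is, componentwise, a $\CC$-linear combination of these six basis solutions; the whole content of the theorem is to realize the continuation explicitly and to pin down the coefficients.

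Starting from the $\Gamma$-form (3.1), I would freeze $d_2$ and replace the sum over $d_1$ by a contour integral, inserting the kernel $\Gamma(-s)\Gamma(1+s)$ whose residues at $s=0,1,2,\dots$ reproduce the original series when $|q_1|<1$:
\[
\sum_{d_1\ge 0}(\cdots)\,q_1^{d_1}
=\frac{1}{2\pi i}\int_{C}
\frac{\Gamma\!\left(1+\tfrac{3h}{z}+3s\right)\Gamma(-s)\Gamma(1+s)}
{\Gamma\!\left(1+\tfrac{h}{z}+s\right)^{4}\,\Gamma\!\left(1+\tfrac{\xi-h}{z}+d_2-s\right)}\,(-q_1)^{s}\,ds .
\]
For $|q_1|>1$, i.e.\ $x=q_1^{-1}$ near the origin, I would close $C$ to the left. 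After bounding the integrand on the connecting arcs by Stirling estimates to justify the contour shift, $\bar I^X$ equals the sum of the left-hand residues, which split into two families: (i) the simple poles $s=-1,-2,\dots$ of $\Gamma(1+s)$, which carry integer powers of $x$; and (ii) the poles of $\Gamma(1+\tfrac{3h}{z}+3s)$, which sit at $s$ congruent to $-\tfrac13,-\tfrac23 \pmod 1$ and carry the fractional powers $x^{1/3}$ and $x^{2/3}$.

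Summing family (i) and simplifying the residue coefficients with the reflection formula $\Gamma(s)\Gamma(1-s)=\pi/\sin\pi s$ and the $\Gamma$-identity (Lemma 3.1) should reorganize this block, after the reindexing $i=d_2-d_1$, $j=d_2$, into $\bar I^Y(x,y)$ up to a cohomology-valued prefactor. Summing family (ii) is the delicate part: the nilpotency of $h$ together with the fourth power $\Gamma(1+\tfrac hz+s)^{-4}$ turns these into higher-order poles, and expanding in $h/z$ reproduces exactly the $\Gamma(\tfrac13+\cdots)^4$ coefficients and the $z^{-2j}$ appearing in the definitions of $I_5$ and $I_6$. This yields an identity $\bar I^X=(\text{constant matrix})\cdot(\bar I^Y,I_5,I_6)^{\mathsf T}$. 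From the family-(i) block I then read off the degree-preserving $L:H^*(\tilde X)\to H^*(\tilde Y)$: since continuation turns the seed $q_1^{h/z}q_2^{\xi/z}$ into $x^{(\xi-h)/z}y^{\xi/z}$, matching the leading term $y^{p/z}$ of $\bar I^Y$ forces $L(\xi-h)=0$ on $H^2$ and $L(\xi)=p$, with the overall normalization (including the numerical constants such as the factor $3$ thrown off by the residue sum) fixed by requiring $L$ to carry the family-(i) block exactly onto $\bar I^Y(x,y)$. Applying $L$ and letting $x\to 0$ then kills the $I_5,I_6$ terms, which carry positive fractional powers of $x$, while $\bar I^Y(x,y)\to I^Y(y)$ by \Cref{restrict}, giving $I^Y(y)=\lim_{x\to 0}L\circ\bar I^X(x,y)$.

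The main obstacle is the residue bookkeeping for family (ii): tracking the higher-order poles through the $h/z$-expansion, keeping the matrix coefficients consistent across the nilpotent cohomology relations $h^4=h\xi-\xi^2=0$, and recognizing the resulting double series as precisely $I_5$ and $I_6$ rather than some other combination. A secondary technical point is the rigorous justification of the contour shift (uniformity of the Stirling bounds as $z$ and the formal classes vary) and the explicit verification that the $L$ extracted from the family-(i) block is consistent and degree-preserving on all of $H^*(\tilde X)$, not merely on $H^2$.
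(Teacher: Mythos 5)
Your high-level strategy is the paper's: represent the inner $d_1$-sum of the $\Gamma$-form (3.1) as a Mellin--Barnes integral, close the contour to the left for $|q_1|>1$, organize the residues, read off $L$, and finish with \Cref{restrict}. But the specific integrand you propose does not work, and the step where it fails is exactly the one you flag as "delicate." With the kernel $\Gamma(-s)\Gamma(1+s)$ applied directly to (3.1), your family (i) of residues at $s=-l-1$ carries the factor $\Gamma(1+\tfrac{3h}{z}-3l-3)/\Gamma(\tfrac hz-l)^4$. Since $1/\Gamma(\tfrac hz-l)=O(h)$ and $\Gamma(\tfrac{3h}{z}-3l-2)=O(h^{-1})$, each such residue is $O(h^3)$, and after multiplying by the Euler-class prefactor $3h$ it is $O(h^4)=0$ in $H^*(\tilde X)$. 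So family (i) vanishes identically and cannot "reorganize into $\bar I^Y(x,y)$." Meanwhile the poles that actually produce $\bar I^Y$ in the paper's computation, namely $s=d_2+\tfrac{\xi-h}{z}-l$, are simply absent from your integrand, because $\Gamma(1+\tfrac{\xi-h}{z}+d_2-s)$ sits in your denominator and contributes no poles. Your continuation would therefore consist only of the fractional-power residues, which is manifestly not the analytic continuation of $I^X$ (it has no $y^{\xi/z}$ block at all), signalling that for your integrand the connecting-arc estimates fail -- the Stirling bound here is the borderline $|27q_1|^{\Re s}$ with no extra decay, so one is not free to choose the kernel.

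The missing idea is the paper's preprocessing by the periodic function $\varphi_{d_2}(s)$, which equals $1$ at integers and, via the reflection formula, trades $\Gamma(1+\tfrac{3h}{z}+3s)$ in the numerator for $1/\Gamma(-\tfrac{3h}{z}-3s)$ in the denominator and $1/\Gamma(1+\tfrac{\xi-h}{z}+d_2-s)$ for $\Gamma(s-d_2-\tfrac{\xi-h}{z})$ in the numerator. This leaves the right-hand residues (hence the series) unchanged but completely rearranges the left-hand pole structure: the fractional poles at $s\equiv-\tfrac13,-\tfrac23\pmod 1$ disappear, the negative-integer residues become the harmless $h^4\cdot(\cdots)=0$ term of (3.7), and the new poles at $s=d_2+\tfrac{\xi-h}{z}-l$ produce exactly (3.8), i.e.\ $\bar I^X=L^{-1}\bar I^Y$ with \emph{zero} component along $I_5$ and $I_6$ -- contrary to your predicted ``constant matrix'' mixing in $I_5,I_6$. (Two smaller points: even if nonzero $I_5,I_6$ components were present they would indeed die in the limit $x\to 0$, so the real burden is showing the integer-power block equals $\bar I^Y$ on the nose, which your scheme cannot do; and the $L$ you extract need only be pinned down on the powers of $\xi$, since after imposing $h^4=h\xi-\xi^2=0$ the continued function involves only $\xi^k$, so there is no consistency issue on the rest of $H^*(\tilde X)$.)
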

\begin{proof}
	By \Cref{restrict}, we simply need to show there exists a degree-preserving linear transformation $L:H^*(\tilde X)\to H^*(\tilde Y)$ such that $\bar I^Y(x,y)=L\circ \bar I^X(q_1,q_2)$ under the change of variable $x\mapsto q_1^{-1}$ and $y\mapsto q_1q_2$.
	To begin with, for every $d_2\in \NN$, we define the following function 
	\begin{align*}
	\varphi_{d_2}(s):&=(-1)^{d_2}\displaystyle\frac{\sin(\frac{\xi-h}z)\pi}{\sin(\frac{3h}z)\pi}\cdot \frac{\sin(-\frac{3h}{z}-3s)\pi}{\sin(s-d_2-\frac{\xi-h}{z})\pi}\\
	&=(-1)^{d_2}\frac{\sin(\frac{\xi-h}z)\pi}{\sin(\frac{3h}z)\pi}\cdot \frac{\pi/\sin(s-d_2-\frac{\xi-h}{z})\pi}{\pi/\sin(-\frac{3h}{z}-3s)\pi}\\
	&=(-1)^{d_2}\frac{\sin(\frac{\xi-h}z)\pi}{\sin(\frac{3h}z)\pi}\cdot \frac{\Gamma(1+\frac{\xi-h}z+d_2-s)\Gamma(s-d_2-\frac{\xi-h}{z})}{\Gamma(1+\frac{3h}z+3s)\Gamma(-\frac{3h}z-3s)}.
	\end{align*}
	It is clear that $\varphi_{d_2}(s)$ is periodic with period 1, and takes value 1 at any integer $s\in \bb Z$.
	Using (3.1), the function $I^X(q_1,q_2)$ may be further written as:
	\begin{align*}
	I^X(q_1,q_2)&=3q_1^{h/z}q_2^{\xi/z}\frac{h\Gamma(1+\frac{\xi-h}z)\Gamma(1+\frac hz)^4\Gamma(1+\frac\xi z)}{\Gamma(1+\frac{3h}{z})}\cdot\\
	&\quad \sum_{(d_1,d_2)\in \NN^2}q_1^{d_1}q_2^{d_2} \frac{\Gamma(1+\frac {3h}{z}+3d_1)\varphi_{d_2}(d_1)}{\Gamma(1+\frac hz+d_1)^4\Gamma(1+\frac \xi z+d_2)\Gamma(1+\frac{\xi-h}{z}+(d_2-d_1))z^{2d_2}},\\
	&=3q_1^{h/z}q_2^{\xi/z}\frac{h\sin(\pi\frac{\xi-h}{z})\Gamma(1+\frac{\xi-h}z)\Gamma(1+\frac hz)^4\Gamma(1+\frac\xi z)}{\sin(\pi\frac{3h}{z})\Gamma(1+\frac{3h}{z})}\cdot\\
	&\quad \sum_{(d_1,d_2)\in \NN^2}q_1^{d_1}q_2^{d_2} \frac{(-1)^{d_2}\Gamma(d_1-d_2-\frac{\xi-h}{z})}{\Gamma(1+\frac hz+d_1)^4\Gamma(1+\frac \xi z+d_2)\Gamma(-\frac{3h}{z}-3d_1)z^{2d_2}}.\tag{3.3}
	\end{align*}
	Then define a sequence of functions $g_{d_2}(s,q_1)$ for each $d_2\in \NN$ as follows.
	\[g_{d_2}(s,q_1):=\frac{\Gamma(s-d_2-\frac{\xi-h}{z})q_1^{s}}{(e^{2\pi \sqrt{-1}s}-1)\Gamma(1+\frac hz+s)^4\Gamma(-\frac{3h}{z}-3s)}.\]
	It is clear that $g_{d_2}(s,q_1)$ is a meromorphic function in $s$ with simple poles at every integer, as well as $s=d_2+\frac{\xi-h}{z}-l$ for $l\in \NN$.
	
	We claim that the function $I^X(q_1,q_2)$ may be represented as the following integral:
	\begin{align*}
	I^X(q_1,q_2)&=3q_1^{h/z}q_2^{\xi/z}\frac{h\sin(\pi\frac{\xi-h}{z})\Gamma(1+\frac{\xi-h}z)\Gamma(1+\frac hz)^4\Gamma(1+\frac\xi z)}{\sin(\pi\frac{3h}{z})\Gamma(1+\frac{3h}{z})}\sum_{d_2\in \NN}\frac{(-q_2)^{d_2}}{\Gamma(1+\frac\xi z+d_2)z^{2d_2}} \int_{C^+} g_{d_2}(s,q_1)ds\tag{3.4},
	\end{align*} 
	where for a fixed $d_2$, the contour $C^+$ goes along the imaginary axis and closes to the right in such a way that only the simple poles at nonnegative integers are enclosed inside $C^+$.
	
	Indeed, according to the residue theorem, for each $d_2\in \bb N$ we have
	\begin{align*}
	\int_{C^+} g_{d_2}(s,q_1)ds&=2\pi \sqrt{-1}\sum_{d_1\in \bb N}\res_{s=d_1}g_{d_2}(s,q_1)\\
	&=\sum_{d_1\in \bb N}\frac{\Gamma(d_1-d_2-\frac{\xi-h}{z})q_1^{d_1}}{\Gamma(1+\frac hz+d_1)^4\Gamma(-\frac{3h}{z}-3d_1)}.\tag{3.5}
	\end{align*}
	Substuiting (3.5) into (3.4) for each $d_2\in \NN$, we obtain (3.3), hence the the claim follows. 
	
	To perform the analytic continuation, we notice that for $|q_1|$ sufficiently large, we may close up the imaginary axis to the left in such a way that all the remaining poles are enclosed in this contour, denoted by $C^-$. Using the residue theorem again, we obtain
	\begin{align*}
	\int_{C^-} g_{d_2}(s,q_1)ds&=2\pi \sqrt{-1}\sum_{l\in {\bb N}}\left(\res_{s=-l-1}g_{d_2}(s,q_1)+\res_{s=d_2+\frac{\xi-h}{z}-l}g_{d_2}(s,q_1)\right)\\
	&=2\pi \sqrt{-1}\sum_{l\in\bb N}\frac{\Gamma(-l-1-d_2-\frac{\xi-h}{z})q_1^{-l-1}}{\Gamma(\frac hz-l)^4\Gamma(-\frac{3h}{z}+3l+3)}+\\
	&\quad2\pi \sqrt{-1}\sum_{l\in {\bb N}}\frac{(-1)^lq_1^{d_2-l+\frac{\xi-h}{z}}}{(e^{2\pi \sqrt{-1}\frac{\xi-h}{z}}-1)\Gamma(1+l)\Gamma(1+\frac {\xi}z+d_2-l)^4\Gamma(3l-3d_2-\frac{3\xi}{z})}.\tag{3.6}
	\end{align*}
	Replacing $C^+$ by $C^-$ in (3.4), and substuiting (3.6) into (3.4) for each $d_2\in \NN$, we obtain the following analytic continuation of $I^X(q_1,q_2)$:
	\[\bar I^X(q_1,q_2)=3(q_1q_2)^{\xi/z}\sum_{(l,d_2)\in \NN^2}q_1^{d_2-l}(q_2)^{d_2}A_{l,d_2}+h^4f(q_1,q_2,\log q_1,\log q_2, h, \xi)\tag{3.7},\\\]
	for some $f(q_1,q_2,\log q_1,\log q_2, h, \xi)\in \CC[[q_1,q_2,\log q_1,\log q_2, h,\xi]]$ and
	\begin{align*}
	A_{l,d_2}&=\frac{(-1)^{l+d_2}(2\pi \sqrt{-1})h\sin\pi(\frac{\xi-h}{z})\Gamma(1+\frac{\xi-h}z)\Gamma(1+\frac hz)^4\Gamma(1+\frac\xi z)}{\sin\pi(\frac{3h}{z})\Gamma(1+\frac{3h}{z})(e^{2\pi \sqrt{-1}\frac{\xi-h}{z}}-1)\Gamma(1+\frac{\xi}{z}+d_2)\Gamma(1+l)\Gamma(1+\frac{\xi}z+d_2-l)^4\Gamma(3l-3d_2-\frac{3\xi} z)z^{2d_2}}\\
	&=\frac{(2\sqrt{-1})h\sin(\frac{3\xi}{z}\pi)\sin(\frac{\xi-h}{z}\pi)\Gamma(1+\frac{\xi-h}{z})\Gamma(1+\frac hz)^4\Gamma(1+\frac\xi z)}{\sin(\frac {3h}{z}\pi)(e^{2\pi \sqrt{-1}\frac{\xi-h}{z}}-1)\Gamma(1+\frac {3h}z)}\\
	&\quad \cdot \frac{\Gamma(1+3\frac\xi z+3d_2-3l)}{\Gamma(1+\frac{\xi}z+d_2)\Gamma(1+l)\Gamma(1+\frac\xi z+d_2-l)^4z^{2d_2}}.
	\end{align*}
	Replacing $q_1,q_2$ by $x,y$ using the change of variable, and repeatedly applying the relation $h^4=h\xi-\xi^2=0$, (3.7) eventually reduces to
	\[\bar I^X(x,y)=\frac{(3\xi)y^{\frac\xi z}\Gamma(1+\frac \xi z)^5}{\Gamma(1+\frac {3\xi}z)}\sum_{(i,j)\in\NN^2}x^{i}y^{j} \frac{\Gamma(1+3\frac\xi z+3j-3i)}{\Gamma(1+\frac{\xi}z+j)\Gamma(1+i)\Gamma(1+\frac\xi z+j-i)^4z^{2j}}.\tag{3.8}\]
	
	Comparing (3.8) with (3.2), it is clear that the following linear transformation does the job:
	\[L: \xi\mapsto p,\qquad \xi^2\mapsto p^2,\qquad \xi^3\mapsto p^3,\qquad \xi^4\mapsto p^4,\]
	hence we are done.
\end{proof}
Now we are in a position to state and prove our main theorem for the local model.
\begin{thm}\label{mytheorem}
	One may perform analytic continuation of $\cH(X)$ to obtain a $D$-module $\bar{\cH}(X)$, then there is a divisor $E$ in the extended K\"ahler moduli space and a submodule $\bar{\cH}^E(X)\subseteq \bar{\cH}(X)$ with maximum trivial $E$-monodromy such that
	$$\bar{\cH}^E(X)|_E\simeq \cH(Y),$$
	where $\bar{\cH}^E(X)|_E$ is the restriction to $E$.
\end{thm}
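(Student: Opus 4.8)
The plan is to follow, step for step, the template established for conifold transitions in the last theorem of Section~2: present $\cH(X)$ as the solution local system of a Picard--Fuchs system, continue it analytically into the extended K\"ahler moduli, carve out the largest submodule whose monodromy along the transition divisor is trivial, and then identify its restriction to that divisor with $\cH(Y)$. By the Picard--Fuchs description of $I^X$ (Lemma~3.1), $\cH(X)$ is identified with the $\DD_q$-module attached to $\{\triangle_1 I=\triangle_2 I=0\}$, a rank-$6$ local system near the origin whose flat sections are the six components of $I^X(q_1,q_2)$.

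First I would move into the extended moduli. The change-of-variable equivalence sends $\{\triangle_1,\triangle_2\}$ to $\{\triangle'_1,\triangle'_2\}$ under $x\mapsto q_1^{-1}$, $y\mapsto q_1q_2$, and \Cref{continue} produces the analytic continuation $\bar I^X(x,y)$ of $I^X$ over a punctured neighborhood of the origin in the $(x,y)$-plane; the local system in which it lives is the continued module $\bar\cH(X)$. By the lemma exhibiting a full basis of solutions to $\{\triangle'_1 I=\triangle'_2 I=0\}$, this system again has holonomic rank $6$, with a basis given by the four components of $\bar I^Y(x,y)$ together with $I_5$ and $I_6$; and by \Cref{continue} these four components are exactly $L\circ\bar I^X$. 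I would take the transition divisor to be $E:=\{x=0\}$, the boundary component of the extended moduli coming from $q_1=\infty$.

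The crux is the monodromy analysis around $E$. By \Cref{restrict}, $\bar I^Y(x,y)$ is a power series in integer powers of $x$ times the $x$-independent factor $y^{p/z}$, so the four sections $L\circ\bar I^X$ have trivial $E$-monodromy; equivalently $\bar I^X$ is fixed by a loop encircling $E$. By contrast $I_5$ and $I_6$ carry the factors $x^{1/3}$ and $x^{2/3}$, so the $E$-monodromy operator $M_E$ acts on them through the eigenvalues $e^{2\pi\sqrt{-1}/3}$ and $e^{4\pi\sqrt{-1}/3}$. Since $\{\bar I^X,\,I_5,\,I_6\}$ is a basis, the locus of trivial $E$-monodromy is precisely the rank-$4$ piece spanned by $\bar I^X$; I would define $\bar\cH^E(X)$ to be the sub-local system attached to $\bar I^X$, which is then the submodule of $\bar\cH(X)$ with maximal trivial $E$-monodromy. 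Restriction to $E$ means letting $x\to 0$, and \Cref{continue} gives $\lim_{x\to 0}L\circ\bar I^X(x,y)=I^Y(y)$; since $I^Y$ generates $\cH(Y)$, this yields $\bar\cH^E(X)|_E\simeq\cH(Y)$.

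The step I expect to be the main obstacle is upgrading the phrases ``maximal trivial $E$-monodromy submodule'' and ``restriction to $E$'' from statements about bare local systems to statements about $\DD$-modules. Concretely, one must check that the trivial-monodromy eigenspace of $M_E$ is stable under the remaining monodromies of $\bar\cH(X)$ (so that it is a genuine sub-local system, not merely an $M_E$-invariant subspace), and that specializing the operators $\triangle'_1,\triangle'_2$ to $x=0$ and passing to the trivial-monodromy part recovers the Picard--Fuchs operators annihilating $I^Y$, so that the $\DD_y$-module structure on the restriction matches that of $\cH(Y)$. Nondegeneracy of the limit---that the four components of $\bar I^X$ stay independent as $x\to 0$---is guaranteed by \Cref{continue}, since their limit $I^Y$ already has four independent components.
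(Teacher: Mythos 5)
Your proposal is correct and follows essentially the same route as the paper: identify $\cH(X)$ with the solution system of $\{\triangle_1,\triangle_2\}$, pass to $\{\triangle_1',\triangle_2'\}$ via Theorem 3.6, single out the four components of $\bar I^Y$ as the maximal trivial-$x$-monodromy part (against $I_5,I_6$ with their $x^{1/3},x^{2/3}$ prefactors), and restrict to $x=0$ to recover $I^Y$. The subtleties you flag about upgrading from local systems to $\DD$-modules are not treated in the paper either, which explicitly announces it will switch freely between the two perspectives.
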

\begin{proof}
	We may identify the ambient part quantum $D$-module ${\cH}(X)$ with the local system attached to $I^X(x,y)$ (Remark 2.5). By \Cref{continue}, we see that $I^X(x,y)$ can be analytically continued to $\bar I^Y(x,y)$ up to a linear transformation. Let $\bar \cH(X)$ be the $D$-module that correpsonds to the system $\triangle_1'I=\triangle_2'I=0$.
	
	Next, we claim the components of $\bar I^Y(x,y)$ give a sub-solution space which has maximum trivial $x$-monodromy. Indeed, their $x$-monodromy is trivial by Lemma 3.4, and maximal because the remaining two solutions $I_5$ and $I_6$ have non-trivial $x$-monodromy due to their initial terms $x^{1/3}$ and $x^{2/3}$. Hence our claim follows. 
	
	Consider the sub-local system $\bar \cH^E(X)$ attached to the components of $\bar I^Y(x,y)$, since their $x$-monodromy is trivial, we may consider the natural restriction of $\bar I^Y(x,y)$ to $x=0$. By Lemma 3.4, we recover the $I$-function $I^Y(y)$ for $Y$ through this process, so the local system ${\cH}(Y)$ is obtained by restricting the local system of $\bar \cH^E(X)$ to $x=0$, which is viewed as a divisor $E$ in the extended K\"ahler moduli. Therefore the theorem is proved.
\end{proof}

\section{A global example}
In this section, we will study one particular example of cubic extremal transitions for Calabi-Yau 3-folds (which appears in \cite{02calabi}), and we will show that \Cref{myconj} holds in this case.

Let $Y$ be a quintic hypersurface in $\PP^4$ defined by the following homogenous equation:
\[x_0^2(tx_0^3+x_1^3+x_2^3+x_3^3+x_4^3)+x_0f(x_1,x_2,x_3,x_4)+g(x_1,x_2,x_3,x_4)=0,\quad(t\neq 0)\]
where $f$ and $g$ are generic homogenous polynomials of degree 4 and 5, respectively. $Y$ is generically a smooth Calabi-Yau 3-folds, which can be deformed into the following singular quintic $Y_0$ with singularity only at $[1:0:0:0:0]\in \PP^4$:
\[Y_0:\quad x_0^2(x_1^3+x_2^3+x_3^3+x_4^3)+x_0f(x_1,x_2,x_3,x_4)+g(x_1,x_2,x_3,x_4)=0.\]
Let $X$ be the Calabi-Yau 3-folds obtained by blowing up $Y_0$ at the triple point $[1:0:0:0:0]\in\PP^4$. Clearly $X$ is a smooth Calabi-Yau 3-fold and the exceptional divisor over $[1:0:0:0:0]\in\PP^4$ is the cubic surface defined by the following equation:
\[S: \{x_1^3+x_2^3+x_3^3+x_4^3=0\}\subseteq \PP^3.\]
The passage from $X$ to $Y$ is an example of global cubic extremal transitions. 
\subsection{The quantum $D$-modules}
On the same line of reasoing, $X$ is a hypersurface inside $\widetilde{X}=\PP(\cO_{\PP^3}\oplus\cO_{\PP^3}(-1))$. We will still adopt the same notations in Section 3, where
\begin{itemize}
	\item $\pi:\PP(\cO_{\PP^3}\oplus\cO_{\PP^3}(-1))\to \PP^3$ is the projection map, and $\cO_{\widetilde{X}}(1)$ is the anti-tautological line bundle over $\widetilde{X}=\PP(\cO_{\PP^3}\oplus\cO_{\PP^3}(-1))$.
	\item $h:=c_1(\pi^*\cO_{\PP^3}(1))$, $\xi:=c_1(\cO_{\widetilde{X}}(1))$. Let $q_1,q_2$ be the small parameters for $X$, correpsonding to $h$ and $\xi$.
	\item $p:=c_1(\cO_{\PP^4}(1))$. Let $y$ be the small parameter for $Y$, corresponding to $p$.
\end{itemize}

Then $X$ represents the divisor class $3h+2\xi$ in $\tilde{X}$, whereas $Y$ represents the divisor class $5p$ in $\hat{Y}=\PP^4$. The $I$-functions of $X$ and $Y$ are the following:
\[I^X(q_1,q_2): =(3h+2\xi)q_1^{h/z}q_2^{\xi/z}\sum_{(d_1,d_2)\in\NN^2} q_1^{d_1}q_2^{d_2}\frac{\prod^0\limits_{m=-\infty} (\xi-h+mz)\prod\limits_{m=1}^{3d_1+2d_2}(3h+2\xi+mz)}{\prod^{d_1}\limits_{m=1}(h+mz)^4\prod^{d_2}\limits_{m=1}(\xi+mz)\prod^{d_2-d_1}\limits_{m=-\infty}(\xi-h+mz)},\]
subject to the relation $h^4=h\xi-\xi^2=0$.
\[I^Y(y)=(5p)y^{p/z}\sum_{d\in \NN} \frac{\prod\limits_{m=1}^{5d}(5p+mz)}{\prod\limits_{m=1}^{d}(p+mz)^5},\]
subject to the relation $p^5=0$.

Following Section 3, we may also construct a hypergeometric series $I^{Y}(x,y)$ in $x$ and $y$ such that
\[\lim_{x\to 0}\bar{I}^Y(x,y)=I^Y(y),\]
where $\bar{I}^Y(x,y)$ is given by:
\[\bar I^Y(x,y):=(5p)e^{(p\log y)/z}\sum_{\substack{i\geqslant 0\\ j\geqslant 0}}x^iy^j\frac{\prod\limits_{m=-\infty}^{0}(p+mz)^4\prod\limits_{m=-\infty}^{5j-3i}(5p+mz)}{\prod\limits_{m=-\infty}^{j-i}(p+mz)^4\prod\limits_{m=1}^{j}(p+mz)\prod\limits_{m=1}^{i}(mz)\prod\limits_{m=-\infty}^{0}(5p+mz)},\,\]
subject to the relation $p^5=0$.

With the explicit formulas of $I^X(q_1,q_2)$, we have the following lemmas. (The differential operators below are also computed in \cite{MR3663796}.)

\begin{lemma}
	The components of  $I^X(q_1,q_2)$ give a full basis of solutions to the system $\triangle_1I=\triangle_2I=P_0I=0$ near the origin in $\CC^*$, where
	\[\triangle_1=(z\delta_{q_1})^4-q_1(3z\delta_{q_1}+2z\delta_{q_2}+z)(3z\delta_{q_1}+2z\delta_{q_2}+2z)(3z\delta_{q_1}+2z\delta_{q_2}+3z)(z\delta_{q_2}-z\delta_{q_1}),\]
	\[\triangle_2=(z\delta_{q_2})(z\delta_{q_2}-z\delta_{q_1})-q_2(3z\delta_{q_1}+2z\delta_{q_2}+z)(3z\delta_{q_1}+2z\delta_{q_2}+2z),\]
	\begin{align*}
	P_0=&-5(z\delta_{q_1})^3+2(z\delta_{q_1})^2(z\delta_{q_2})+15q_1(z\delta_{q_2}-z\delta_{q_1})(3z\delta_{q_1}+2z\delta_{q_2}+z)(3z\delta_{q_1}+2z\delta_{q_2}+2z)\\
	&\quad-4q_2(z\delta_{q_2})^2(3z\delta_{q_1}+2z\delta_{q_2}+z).
	\end{align*}
\end{lemma}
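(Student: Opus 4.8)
The plan is to follow the strategy used for Lemma 3.2, producing matching lower and upper bounds on the dimension of the solution space of the system near the origin. First I would run a ratio test on the series defining $I^X(q_1,q_2)$ to confirm that it is holomorphic on a punctured neighborhood of the origin in $(\CC^*)^2$, so that its cohomology-valued components are genuine local solutions. The lower bound then comes from counting the linearly independent components of $I^X$, and the upper bound from a holonomic rank computation for the ideal generated by $\triangle_1,\triangle_2,P_0$; the lemma follows once the two bounds agree at $6$.

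For the annihilation statement, write $I^X(q_1,q_2)=(3h+2\xi)\,q_1^{h/z}q_2^{\xi/z}\sum_{(d_1,d_2)\in\NN^2}A_{d_1,d_2}\,q_1^{d_1}q_2^{d_2}$. Exactly as in Lemma 3.2, the operators $\triangle_1$ and $\triangle_2$ annihilate $I^X$ because they encode, respectively, the recursion of $A_{d_1,d_2}$ in the $d_1$- and $d_2$-directions, with the cohomology relations $h^4=h\xi-\xi^2=0$ expressing the vanishing $A_{d_1,d_2}=0$ for $d_1<0$ or $d_2<0$. The operator $P_0$ is of a different nature: it is the differential lift of the cohomological relation $(-5h^3+2h^2\xi)(3h+2\xi)=0$ in $(3h+2\xi)H^*(\widetilde X)$, valid since $(3h+2\xi)(-5h^3+2h^2\xi)=-15h^4-4h^3\xi+4h^2\xi^2=0$ after applying $h^4=0$ and $\xi^2=h\xi$. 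Concretely, its leading part $-5(z\delta_{q_1})^3+2(z\delta_{q_1})^2(z\delta_{q_2})$ acts on the leading term $(3h+2\xi)q_1^{h/z}q_2^{\xi/z}$ by sending $z\delta_{q_1}\mapsto h$ and $z\delta_{q_2}\mapsto \xi$, producing $(-5h^3+2h^2\xi)(3h+2\xi)=0$; the two correction terms proportional to $q_1$ and $q_2$ are precisely what is needed so that $P_0I^X=0$ holds to all orders, which I would verify by substituting the recursions for $A_{d_1,d_2}$ order by order.

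For the rank count, note that the six components of $I^X$ span $(3h+2\xi)H^*(\widetilde X)$, a $6$-dimensional subspace of $H^*(\widetilde X)$, and these components are linearly independent because their leading terms $q_1^{h/z}q_2^{\xi/z}\cdot\gamma$ carry distinct cohomology classes $\gamma$, hence distinct powers of $\log q_1,\log q_2$; this gives at least six independent solutions. For the upper bound I would invert $q_1,q_2$, under which $\triangle_1,\triangle_2,P_0$ become polynomials of degrees $4,2,3$ in $\theta_i:=z\delta_{q_i}$. The two box operators $\triangle_1,\triangle_2$ alone cut out a system of holonomic rank $4\cdot 2=8=\dim H^*(\widetilde X)$, matching the full toric GKZ system of $\widetilde X$; adjoining $P_0$, the lift of the degree-three relation above, is exactly what reduces the rank by $2$ down to $6$, in agreement with the ambient part of the hypersurface. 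This amounts to checking $\dim_{\CC(q)}\CC(q)[\theta_1,\theta_2]/(\triangle_1,\triangle_2,P_0)=6$ by a Gröbner basis computation.

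The main obstacle is this final rank computation. Unlike the local model of Lemma 3.2, where two box operators already realize the correct ambient rank, here I must confirm that $P_0$ simultaneously lies in the annihilator and lowers the rank by exactly $2$, with no further hidden relations needed to generate the whole Picard--Fuchs ideal. Pinning down the precise value $6$ (rather than merely $\le 8$ or $\ge 6$) requires the explicit elimination, together with a check that the leading forms of $\triangle_1,\triangle_2$ have no common zero ``at infinity'' that would spoil the B\'ezout count of $8$. This is consistent with the operators recorded in \cite{MR3663796}.
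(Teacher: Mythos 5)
Your proposal is correct and follows the route the paper intends: the paper gives no proof of this lemma at all (it cites Mann--Mignon for the operators and relies on the parallel with Lemma 3.2, whose proof is precisely your argument of ratio test, recursion relations for annihilation, linear independence of the components for the lower bound, and a holonomic rank computation for the upper bound). Your identification of $P_0$ as the differential lift of the relation $(3h+2\xi)(-5h^3+2h^2\xi)=0$ in $H^*(\widetilde X)$ --- the kernel relation responsible for cutting the ambient rank $8$ down to $\dim (3h+2\xi)H^*(\widetilde X)=6$ --- is a correct and useful piece of explanation that the paper leaves entirely implicit.
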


In a similar fashion, we introduce the following two hypergeometric series:
\[I_5(x,y)=x^{\frac13}\sum_{i\geqslant j\geqslant 0}x^iy^j\frac{(-1)^{i-j}\Gamma(\frac13+i-j)^4}{\Gamma(\frac 43+i)\Gamma(3i-5j+1)\Gamma(1+j)},\]
\[I_6(x,y)=x^{\frac 23}\sum_{i\geqslant j\geqslant 0}x^iy^j\frac{(-1)^{i-j}\Gamma(\frac23+i-j)^4}{\Gamma(\frac 53+i)\Gamma(3i-5j+2)\Gamma(1+j)}.\]

\begin{lemma}
	The components of $\bar I^Y(q_1,q_2)$, together with $I_5,I_6$, give a full basis to the system $\triangle_1'I=\triangle_2'I=P_0'I=0$, where
	\[\displaystyle \triangle_1'=x(z\delta_y-z\delta_x)^4-(5z\delta_y-3z\delta_x+3z)(5z\delta_y-3z\delta_x+2z)(5z\delta_y-3z\delta_x+z+z)(z\delta_x),\]
	\[\displaystyle \triangle_2'=(z\delta_{x})(z\delta_{y})-xy(5\delta_y-3\delta_x+z)(5\delta_y-3\delta_x+2z),\]
	\begin{align*}
    P_0'=&-5(z\delta_{y}-z\delta_{x})^3+2(z\delta_{y}-z\delta_{x})^2(z\delta_{y})+15x^{-1}(z\delta_{x})(5z\delta_{y}-3z\delta_{x}+z)(5z\delta_{y}-3z\delta_{x}+2z)\\
	&\quad-4xy(z\delta_{y}-z\delta_{x})^2(5z\delta_{y}-3z\delta_{x}+z).
	\end{align*}
\end{lemma}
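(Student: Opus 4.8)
The plan is to follow the template of the preceding lemma (the Picard--Fuchs system for $I^X$ in this section) and of its local counterpart in Section 3, proving the statement in four steps: convergence, direct annihilation, linear independence, and a rank count. Since the three operators $\triangle_1', \triangle_2', P_0'$ arise from $\triangle_1, \triangle_2, P_0$ of the preceding lemma under the substitution $z\delta_{q_1} = z\delta_y - z\delta_x$, $z\delta_{q_2} = z\delta_y$ (coming from $x \mapsto q_1^{-1}$, $y \mapsto q_1 q_2$), the whole argument runs in parallel, the only genuinely new ingredients being the two extra series $I_5, I_6$ and the lower-order operator $P_0'$.

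First I would check, by a ratio test, that $\bar I^Y(x,y)$, $I_5$ and $I_6$ are holomorphic for $(x,y)$ near the origin in $(\CC^*)^2$; the fractional prefactors $x^{1/3}$, $x^{2/3}$ are harmless for convergence. Next I would verify the annihilation relations. Writing $\bar I^Y(x,y) = C\, y^{p/z}\sum_{i,j} x^i y^j B_{i,j}$, with $B_{i,j}$ the $\Gamma$-quotient produced by the $\Gamma$-function identity of Section 3, the action of each operator on a monomial $x^i y^j$ splits into a polynomial factor in $z\delta_x, z\delta_y$ (acting on $x^i y^j y^{p/z}$) and an index shift coming from the monomial prefactor. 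Thus $\triangle_2' I = 0$ becomes the recursion relating $B_{i,j}$ to $B_{i-1,j-1}$, $\triangle_1' I = 0$ the recursion relating $B_{i,j}$ to $B_{i-1,j}$, and $P_0' I = 0$ a further relation among neighbouring coefficients; each follows directly from the explicit $\Gamma$-quotient, and the cohomology relation $p^5 = 0$ is matched by the vanishing of $B_{i,j}$ forced outside the admissible index cone. The same recursion bookkeeping, now with the scalar coefficients of $I_5$ and $I_6$, shows that these two series are annihilated as well.

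For linear independence I would use leading terms: the four nonzero components of $\bar I^Y(x,y)$ (the coefficients of $p, p^2, p^3, p^4$ built from $y^{p/z}$) are independent through their distinct powers of $\log y$, while $I_5, I_6$ are independent of these and of each other because their leading monomials $x^{1/3}, x^{2/3}$ are non-integral powers of $x$. To see that these six solutions exhaust the solution space, I would show that the holonomic rank of $\{\triangle_1', \triangle_2', P_0'\}$ is six. The cleanest route is the substitution above: exactly as in Lemma 3.5, and after clearing the harmless factor $x \neq 0$ from $\triangle_1'$ and $P_0'$, it carries $\{\triangle_1', \triangle_2', P_0'\}$ term by term onto $\{\triangle_1, \triangle_2, P_0\}$, whose solution space is identified with the span of the components of $I^X$ in the preceding lemma and is therefore six-dimensional. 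Hence the six linearly independent solutions form a full basis.

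The hard part will be the operator $P_0'$. Unlike the GKZ-type operators $\triangle_1', \triangle_2'$, it is of lower order and carries the singular term $x^{-1}(z\delta_x)(\cdots)$; its role is precisely to cut the holonomic rank down to the correct value six. Verifying that $P_0'$ annihilates the fractional-power series $I_5, I_6$ --- where the $x^{-1}$ term genuinely interacts with the indicial exponents $1/3, 2/3$ --- and confirming that the combined rank is exactly six (most safely via the change-of-variable equivalence with the preceding lemma, where this count is already available) are the steps requiring real care; the remaining verifications are the routine recursion bookkeeping already performed for $I^X$.
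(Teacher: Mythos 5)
Your proposal is correct and follows essentially the same route as the paper: the paper states this lemma without proof, implicitly deferring to the template of Lemma 3.2 and Lemma 3.4 (ratio test for holomorphy, coefficient recursions for annihilation, linear independence from initial terms, and a six-dimensional holonomic rank bound), which is exactly the argument you spell out. Your added care with the $x^{-1}$ term of $P_0'$ acting on the fractional exponents of $I_5$, $I_6$, and your use of the change-of-variable equivalence to certify the rank count, only fill in details the paper leaves implicit.
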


The differential equation systems $\{\triangle_1I=\triangle_2I=P_0I=0\}$ and $\{\triangle'_1I=\triangle'_2I=P_0'I=0\}$ both have 6-dimensional solution spaces. We can relate them using an appropriate change of variable as follows.
\begin{lemma}
	The change of variable $x\mapsto q_1^{-1}$ and $y\mapsto q_1q_2$ induces an equivalence between the differential equation systems $\{\triangle_1I=\triangle_2I=P_0I=0\}$ and $\{\triangle'_1I=\triangle'_2I=P_0'I=0\}$.
\end{lemma}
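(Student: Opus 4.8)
The plan is to follow the same strategy used for the corresponding change-of-variable lemma of Section 3, now accounting for the extra operator $P_0$ and the degree-$5$ coefficients coming from the quintic. First I would record the effect of the substitution $x\mapsto q_1^{-1}$, $y\mapsto q_1q_2$ on the logarithmic derivatives. Writing $u_1=\log q_1$, $u_2=\log q_2$, one has $\log x=-u_1$ and $\log y=u_1+u_2$, so a direct chain-rule computation gives
\[z\delta_{q_1}=z\delta_y-z\delta_x,\qquad z\delta_{q_2}=z\delta_y,\]
exactly as in the local case; on the level of coefficients the same substitution reads $q_1=x^{-1}$ and $q_2=xy$. I would also note at the outset that $z\delta_x$ and $z\delta_y$ commute, so that any reordering of the commuting linear factors appearing below is harmless.

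With these relations in hand the verification is a term-by-term substitution into each generator. For $\triangle_2$ this is immediate: $z\delta_{q_2}-z\delta_{q_1}=z\delta_x$, each factor $3z\delta_{q_1}+2z\delta_{q_2}+kz$ becomes $5z\delta_y-3z\delta_x+kz$, and $q_2=xy$, so $\triangle_2$ turns into $\triangle_2'$. For $\triangle_1$ the same substitutions produce
\[(z\delta_y-z\delta_x)^4-x^{-1}(5z\delta_y-3z\delta_x+z)(5z\delta_y-3z\delta_x+2z)(5z\delta_y-3z\delta_x+3z)(z\delta_x);\]
since $x\neq0$ on the punctured domain I may multiply on the left by $x$ without altering the solution sheaf, which clears the $x^{-1}$ and yields $\triangle_1'$ (the three linear factors commute, so their order is immaterial). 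Here one must keep the monomial $q_1=x^{-1}$ to the left of the differential factors, matching how $x^{-1}$ is written in $\triangle_1'$, because $x^{-1}$ does not commute with $z\delta_x$.

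The genuinely new input relative to Section 3 is the operator $P_0$, and I expect its verification to be the main bookkeeping obstacle. It has four inhomogeneous pieces, so I would substitute each separately. The pure-derivative part $-5(z\delta_{q_1})^3+2(z\delta_{q_1})^2(z\delta_{q_2})$ becomes $-5(z\delta_y-z\delta_x)^3+2(z\delta_y-z\delta_x)^2(z\delta_y)$; the $q_1$-term becomes $15x^{-1}(z\delta_x)(5z\delta_y-3z\delta_x+z)(5z\delta_y-3z\delta_x+2z)$, again with $x^{-1}$ kept on the left; and the $q_2$-term, using $z\delta_{q_2}=z\delta_y$, becomes $-4xy(z\delta_y)^2(5z\delta_y-3z\delta_x+z)$. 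Comparing these pieces with $P_0'$ completes the check. The only real care is in the operator ordering and in tracking which combination of $z\delta_x,z\delta_y$ each factor produces; the last term in particular relies on $z\delta_{q_2}=z\delta_y$ and is where a transcription slip is easiest to make.

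Finally I would package these computations as a ring statement. The substitution is an isomorphism of the rings of differential operators after inverting $x$ (equivalently $q_1$), and under it the left ideal $\langle\triangle_1,\triangle_2,P_0\rangle$ maps onto $\langle\triangle_1',\triangle_2',P_0'\rangle$, precisely because $x$ is a unit in the localized ring — this is what legitimizes clearing the $x^{-1}$ in the $\triangle_1$ step. Consequently the two annihilator ideals correspond, their solution spaces are identified by the change of variable, and the asserted equivalence of systems follows.
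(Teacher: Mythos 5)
Your proposal follows the same route as the paper's own proof, which simply records the operator relations $z\delta_{q_1}=z\delta_y-z\delta_x$, $z\delta_{q_2}=z\delta_y$ (together with $q_1=x^{-1}$, $q_2=xy$) and declares the term-by-term substitution straightforward; you have filled in those details, and your treatment of $\triangle_1$, $\triangle_2$, the operator ordering, and the clearing of the unit $x^{-1}$ is all fine. One point, however, deserves attention: your (correct) substitution of the last term of $P_0$ produces $-4xy(z\delta_y)^2(5z\delta_y-3z\delta_x+z)$, whereas the operator $P_0'$ displayed in the paper ends in $-4xy(z\delta_y-z\delta_x)^2(5z\delta_y-3z\delta_x+z)$, so the assertion that ``comparing these pieces with $P_0'$ completes the check'' does not literally hold for the formulas as printed. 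Most likely one of the two displayed operators carries a transcription slip (you yourself flag this term as the dangerous one), but as written your proof silently passes over the mismatch; to be airtight you should either correct the offending formula or show that the discrepancy $-4xy\,(z\delta_x)(2z\delta_y-z\delta_x)(5z\delta_y-3z\delta_x+z)$ lies in the left ideal generated by $\triangle_1'$ and $\triangle_2'$, so that the two ideals still coincide.
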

\begin{proof}
Notice that under the change of variable $x\mapsto q_1^{-1}$ and  $y\mapsto q_1q_2$, the differential operators have the following relations: 
\[\delta_{q_1}=\delta_{y}-\delta_x,\quad \delta_{q_2}=\delta_y.\]
It is straightfoward to check that this change of variable converts $\triangle_1I=\triangle_2I=P_0I=0$ directly into $\triangle_1'I=\triangle_2'I=P_0I=0$, provided that $x\neq 0$ Hence we are done. 
\end{proof}
\subsection{Analytic continuation of the $I$-function}
Since $I^X(q_1,q_2)$ and $\bar I^Y(x,y)$ satisfy the the same differential equation up to a change of variable, we would expect that $I^X(q_1,q_2)$ can be analytic continued to $\bar I^Y(x,y)$ up to a linear transformation. Indeed, we may still work out this analytic continuation explicitly using Mellin-Barnes method, which is parallel to Section 3.2.

First, we rewrite $I^X(q_1,q_2)$ and $\bar I^Y(x,y)$ using Lemma 3.6 as follows.
\begin{align*}
I^X(q_1,q_2)&=(3h+2\xi)q_1^{\frac hz}q_2^{\frac \xi z}\cdot\frac{\Gamma(1+\frac{\xi-h}z)\Gamma(1+\frac hz)^4\Gamma(1+\frac\xi z)}{\Gamma(1+\frac{3h+2\xi}{z})}\cdot \\
&\quad\sum_{\substack{d_1\geqslant 0\\ d_2\geqslant 0}}q_1^{d_1}q_2^{d_2} \frac{\Gamma(1+\frac {3h+2\xi}{z}+3d_1+2d_2)}{\Gamma(1+\frac hz+d_1)^4\Gamma(1+\frac \xi z+d_2)\Gamma(1+\frac{\xi-h}{z}+(d_2-d_1))},\tag{4.1}
\end{align*}
subject to the relation $h^4=h\xi-\xi^2=0.$
\begin{equation*}
\bar I^Y(x,y)=\frac{(5p)y^{\frac pz}\Gamma(1+\frac pz)^5}{\Gamma(1+\frac {5p}{z})}\sum_{\substack{i\geqslant 0\\ j\geqslant 0}}x^iy^j \frac{\Gamma(1+\frac{5p}{z}+5j-3i)}{\Gamma(1+\frac pz+j-i)^4\Gamma(1+\frac pz+j)\Gamma(1+i)},\tag{4.2}
\end{equation*}
subject to relation $p^5=0.$

For every $d_2\in \NN$, we define the following function 
\begin{align*}
\varphi_{d_2}(s):&=(-1)^{d_2}\displaystyle\frac{\sin(\frac{\xi-h}z)\pi}{\sin(\frac{3h+2\xi}z)\pi}\cdot \frac{\sin(-\frac{3h+2\xi}{z}-3s-2d_2)\pi}{\sin(s-d_2-\frac{\xi-h}{z})\pi}\\
&=(-1)^{d_2}\frac{\sin(\frac{\xi-h}z)\pi}{\sin(\frac{3h+2\xi}z)\pi}\cdot \frac{\pi/\sin(s-d_2-\frac{\xi-h}{z})\pi}{\pi/\sin(-\frac{3h+2\xi}{z}-3s-2d_2)\pi}\\
&=(-1)^{d_2}\frac{\sin(\frac{\xi-h}z)\pi}{\sin(\frac{3h+2\xi}z)\pi}\cdot \frac{\Gamma(1+\frac{\xi-h}z+d_2-s)\Gamma(s-d_2-\frac{\xi-h}{z})}{\Gamma(1+\frac{3h}z+3s+2d_2)\Gamma(-\frac{3h+2\xi}z-3s-2d_2)}.
\end{align*}
By definition, $\varphi_{d_2}(s)$ is periodic with period 1, and takes value 1 at every integer $s\in \bb Z$. It follows that $I^X(x,y)$ can be further written as
\begin{align*}
I^X(q_1,q_2)&=(3h+2\xi)q_1^{h/z}q_2^{\xi/z}\frac{\Gamma(1+\frac{\xi-h}z)\Gamma(1+\frac hz)^4\Gamma(1+\frac\xi z)}{\Gamma(1+\frac{3h+2\xi}{z})}\cdot\\
&\quad \sum_{(d_1,d_2)\in \NN^2}q_1^{d_1}q_2^{d_2} \frac{\Gamma(1+\frac {3h+2\xi}{z}+3d_1+2d_2)\varphi_{d_2}(d_1)}{\Gamma(1+\frac hz+d_1)^4\Gamma(1+\frac \xi z+d_2)\Gamma(1+\frac{\xi-h}{z}+(d_2-d_1))},\\
&=(3h+2\xi)q_1^{h/z}q_2^{\xi/z}\frac{\sin(\pi\frac{\xi-h}{z})\Gamma(1+\frac{\xi-h}z)\Gamma(1+\frac hz)^4\Gamma(1+\frac\xi z)}{\sin(\pi\frac{3h+2\xi}{z})\Gamma(1+\frac{3h+2\xi}{z})}\cdot\\
&\quad \sum_{(d_1,d_2)\in \NN^2}q_1^{d_1}q_2^{d_2} \frac{(-1)^{d_2}\Gamma(d_1-d_2-\frac{\xi-h}{z})}{\Gamma(1+\frac hz+d_1)^4\Gamma(1+\frac \xi z+d_2)\Gamma(-\frac{3h+2\xi}{z}-3d_1-2d_2)}.\tag{4.3}
\end{align*}
Similarly, we define a sequence of functions $g_{d_2}(s,q_1)$ for each $d_2\in \NN$ as follows.
\[g_{d_2}(s,q_1):=\frac{\Gamma(s-d_2-\frac{\xi-h}{z})q_1^{s}}{(e^{2\pi \sqrt{-1}s}-1)\Gamma(1+\frac hz+s)^4\Gamma(-\frac{3h+2\xi}{z}-3s-2d_2)}.\]
We see that $g_{d_2}(s,q_1)$ is a meromorphic function in $s$ with simple poles at every integer, as well as $s=d_2+\frac{\xi-h}{z}-l$ for $l\in \NN$. We claim that $I^X(x,y)$ admits the following integral representation.
\begin{align*}
I^X(q_1,q_2)&=(3h+2\xi)q_1^{h/z}q_2^{\xi/z}\frac{\sin(\pi\frac{\xi-h}{z})\Gamma(1+\frac{\xi-h}z)\Gamma(1+\frac hz)^4\Gamma(1+\frac\xi z)}{\sin(\pi\frac{3h+2\xi}{z})\Gamma(1+\frac{3h+2\xi}{z})}\\
&\quad \cdot \sum_{d_2\in \NN}\frac{(-q_2)^{d_2}}{\Gamma(1+\frac\xi z+d_2)} \int_{C^+} g_{d_2}(s,q_1)ds\tag{4.4},
\end{align*} 
where for a fixed $d_2$, the contour $C^+$ goes along the imaginary axis and closes to the right in such a way that only the simple poles at nonnegative integers are enclosed inside $C^+$.

Again, this follows from the residue computation. For each $d_2\in \bb N$ we have
\begin{align*}
\int_{C^+} g_{d_2}(s,q_1)ds&=2\pi \sqrt{-1}\sum_{d_1\in \bb N}\res_{s=d_1}g_{d_2}(s,q_1)\\
&=\sum_{d_1\in \bb N}\frac{\Gamma(d_1-d_2-\frac{\xi-h}{z})q_1^{d_1}}{\Gamma(1+\frac hz+d_1)^4\Gamma(-\frac{3h+2\xi}{z}-3d_1-2d_2)}.\tag{4.5}
\end{align*}
Substuiting (4.5) into (4.4) gives (4.3). Hence our claim is proved.

To perform the analytic continuation, we notice that for $|q_1|$ sufficiently large, we may close up the imaginary axis to the left in such a way that all the remaining poles are enclosed in this contour, denoted by $C^-$. Using the residue theorem again, we obtain
\begin{align*}
\int_{C^-} g_{d_2}(s,q_1)ds&=2\pi \sqrt{-1}\sum_{l\in {\bb N}}\left(\res_{s=-l-1}g_{d_2}(s,q_1)+\res_{s=d_2+\frac{\xi-h}{z}-l}g_{d_2}(s,q_1)\right)\\
&=2\pi \sqrt{-1}\sum_{l\in\bb N}\frac{\Gamma(-l-1-d_2-\frac{\xi-h}{z})q_1^{-l-1}}{\Gamma(\frac hz-l)^4\Gamma(-\frac{3h+2\xi}{z}+3l-2d_2+3)}+\\
&\quad2\pi \sqrt{-1}\sum_{l\in {\bb N}}\frac{(-1)^lq_1^{d_2-l+\frac{\xi-h}{z}}}{(e^{2\pi \sqrt{-1}\frac{\xi-h}{z}}-1)\Gamma(1+l)\Gamma(1+\frac {\xi}z+d_2-l)^4\Gamma(3l-5d_2-\frac{5\xi}{z})}.\tag{4.6}
\end{align*}
Replacing $C^+$ by $C^-$ in (4.4), and substuiting (4.6) into (4.4) for each $d_2\in \NN$, we obtain the following analytic continuation of $I^X(q_1,q_2)$.
\[\bar I^X(q_1,q_2)=(3h+2\xi)(q_1q_2)^{\xi/z}\sum_{(l,d_2)\in \NN^2}q_1^{-l}(q_1q_2)^{d_2}A_{l,d_2}+h^4f(q_1,q_2,\log q_1,\log q_2, h, \xi)\tag{4.7},\\\]
for some $f(q_1,q_2,\log q_1,\log q_2, h, \xi)\in \CC[[q_1,q_2,\log q_1,\log q_2, h,\xi]]$ and
\begin{align*}
A_{l,d_2}&=\frac{(2\sqrt{-1})h\sin(\frac{5\xi}{z}\pi)\sin(\frac{\xi-h}{z}\pi)\Gamma(1+\frac{\xi-h}{z})\Gamma(1+\frac hz)^4\Gamma(1+\frac\xi z)}{\sin(\frac {3h+2\xi}{z}\pi)(e^{2\pi \sqrt{-1}\frac{\xi-h}{z}}-1)
\Gamma(1+\frac {3h+2\xi}z)}\\
&\quad\cdot \frac{\Gamma(1+5\frac\xi z+5d_2-3l)}{\Gamma(1+\frac{\xi}z+d_2)\Gamma(1+l)\Gamma(1+\frac\xi z+d_2-l)^4}.
\end{align*}
Replacing $q_1,q_2$ by $x,y$ using the change of variable, and repeatedly applying the relation $h^4=h\xi-\xi^2=0$, (4.7) eventually reduces to
\[\bar I^X(x,y)=\frac{(5\xi)y^{\frac\xi z}\Gamma(1+\frac \xi z)^5}{\Gamma(1+\frac {3\xi}z)}\sum_{(i,j)\in\NN^2}x^{i}y^{j} \frac{\Gamma(1+5\frac\xi z+5j-3i)}{\Gamma(1+\frac{\xi}z+j)\Gamma(1+i)\Gamma(1+\frac\xi z+j-i)^4}.\]
Thus using the linear transformation $L:H^*(\widetilde X)\to H^*(\widetilde Y)$ given by
\[L: \xi\mapsto p,\qquad \xi^2\mapsto p^2,\qquad \xi^3\mapsto p^3,\qquad \xi^4\mapsto p^4,\]
we have
\[\bar{I}^Y(x,y)=L\circ \bar{I}^X(x,y),\]
and 
\[I^Y(y)=\lim_{x\to 0}\bar{I}^Y(x,y)=\lim_{x\to 0}L\circ \bar{I}^X(x,y).\]
Following a similar argument as in \Cref{mytheorem}, now we may conclude that \Cref{myconj} holds in our case, that is
\begin{thm}
	For the cubic extremal transition between two Calabi-Yau 3-folds $X$ and $Y$ given above, one may perform analytic continuation of $\cH(X)$ to obtain a $D$-module $\bar{\cH}(X)$, then there is a divisor $E$ in the extended K\"ahler moduli space and a submodule $\bar{\cH}^E(X)\subseteq \bar{\cH}(X)$ with maximum trivial $E$-monodromy such that
	$$\bar{\cH}^E(X)|_E\simeq \cH(Y).$$
\end{thm}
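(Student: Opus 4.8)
The plan is to transcribe the analytic-continuation computation just completed into the language of $D$-modules---equivalently, local systems of flat sections---mirroring step for step the proof of \Cref{mytheorem}. First I would invoke Remark 2.5 to identify $\cH(X)$ with the local system attached to $I^X(q_1,q_2)$, whose Picard--Fuchs ideal is $\{\triangle_1,\triangle_2,P_0\}$. The change of variable $x\mapsto q_1^{-1}$, $y\mapsto q_1q_2$ of this section turns this system into $\{\triangle_1',\triangle_2',P_0'\}$, and the Mellin--Barnes argument running through (4.3)--(4.7) produces the explicit analytic continuation $\bar I^X(x,y)$ together with a degree-preserving linear map $L:H^*(\widetilde X)\to H^*(\widetilde Y)$ satisfying $\bar I^Y(x,y)=L\circ\bar I^X(x,y)$. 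I would then define $\bar\cH(X)$ to be the local system attached to the continued system $\{\triangle_1'I=\triangle_2'I=P_0'I=0\}$ on a punctured neighborhood of the origin in $(\CC^*)^2$.

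The heart of the argument is the monodromy decomposition around $x=0$. The Picard--Fuchs lemma of this section furnishes a full basis of six solutions: the four components of $\bar I^Y(x,y)$ together with $I_5$ and $I_6$. Since the prefactor $y^{p/z}$ of $\bar I^Y$ carries no $x$-dependence and the series involves only integer powers of $x$, the span of the $\bar I^Y$-components has trivial monodromy about $x=0$; by contrast $I_5$ and $I_6$ begin with the fractional powers $x^{1/3}$ and $x^{2/3}$ and hence have nontrivial $x$-monodromy. The maximal trivial-monodromy part is therefore exactly the rank-four sub-local system spanned by the components of $\bar I^Y$. I would take this to be $\bar\cH^E(X)$, with $E=\{x=0\}$ the asserted divisor in the extended K\"ahler moduli.

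Finally, because the $x$-monodromy of the components of $\bar I^Y(x,y)$ is trivial, $\bar\cH^E(X)$ extends holomorphically across $E$ and admits a well-defined restriction there. Passing to $\lim_{x\to 0}$ recovers $I^Y(y)$---this is the content of the limit $\lim_{x\to 0}\bar I^Y(x,y)=I^Y(y)$ recorded above, combined with the reduction of $\bar I^X$ to $\bar I^Y$ after applying $L$---so Remark 2.5 applied to $Y$ yields $\bar\cH^E(X)|_E\simeq\cH(Y)$, completing the proof.

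The step I expect to be the genuine obstacle is not the hypergeometric algebra, which this section already supplies, but the restriction at $x=0$. Unlike the local model of Section 3, the operator $P_0'$ (and the rewritten $\triangle_1'$) carry an explicit $x^{-1}$ term, so the $D$-module is singular along $E$ and one cannot simply substitute $x=0$ into the differential operators. The correct move---consistent with the convention announced in the Plan of the paper---is to argue entirely at the level of local systems: the trivial-monodromy sub-local system is precisely the piece that extends across the puncture, and restriction to $E$ means restriction of that extended local system. I would take care to phrase the maximality of $\bar\cH^E(X)$ intrinsically, as the canonical sub-object cut out by the kernel of $M_E-\mathrm{id}$ rather than by a choice of basis, so that the isomorphism $\bar\cH^E(X)|_E\simeq\cH(Y)$ is manifestly independent of the chosen solution frame.
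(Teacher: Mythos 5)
Your proposal is correct and follows essentially the same route as the paper, which proves this theorem by repeating verbatim the argument of Theorem 3.8: identify $\cH(X)$ with the local system of $I^X$, use the Mellin--Barnes continuation and the change of variable to pass to the system $\{\triangle_1',\triangle_2',P_0'\}$, single out the rank-four sub-local system spanned by the components of $\bar I^Y(x,y)$ as the maximal trivial-$x$-monodromy part (with $I_5,I_6$ excluded by their fractional leading powers $x^{1/3},x^{2/3}$), and restrict to $x=0$. Your closing observation about the $x^{-1}$ terms in $P_0'$ and the need to phrase the restriction at the level of local systems is a point the paper glosses over but resolves the same way via its announced convention of switching freely between $D$-modules and local systems.
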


\section{Connection to FJRW $D$-modules}
In this section, we will explore the phenomenon of rank reduction in cubic extremal transitions, and provide a partial explanation for it, namely, the FJRW theory of the cubic singularity. 

In the local model or the global example of cubic extremal transitions studied in Section 3 and Section 4, we notice that the ambient quantum $D$-module of $\cH(X)$ has rank 6, whereas the ambient quantum $D$-module $\cH(Y)$ has rank 4. In general, one would expect the Gromov-Witten theory of $X$ recovers the Gromov-Witten theory of $Y$. This is partly due to the nature of extremal transitions: as noted in \cite{MR2240431},  extremal transitions would generally decrease the K\"ahler moduli. Since $Y$ is obtained by deforming the cubic singularity, we expect the cubic singularity would account for this rank reduction.

The quantum theory of the Landau-Ginzburg A-model was worked out by Fan, Jarvis, and Ruan in a series papers \cite{MR3112508,MR3043578,MR2400605}, based on Witten's proposal \cite{MR1215968}, and now this theory is commonly known as FJRW theory. 
According to the LG/CY correspondence \cite{MR2672282}, roughly speaking, the FJRW theory of a pair $(W,G)$ satisfying the so-called Calabi-Yau condition is equivalent to the Gromov-Witten theory of the hypersurface defined by the equation $W=0$ in a weighted projective space. The basic strategy is to relate the $I$-functions associated  to the two theories by analytic continuation. However, as explained in P. Acosta's work \cite{2014arXiv1411.4162A}, when the Calabi-Yau condition for $(W,G)$ fails, the $I$-function on one side has an irregular singularity at infinity and on the other side becomes a formal function with zero radius of convergence. In his work, an analogous correspondence for Fano/General Type varieties is developed by appealing to the theory of asymptotic expansion. It is shown that in the Fano case, the Gromov-Witten $I$-function can be analytically continued,  up to a linear transformation, to obtain the so-called \emph{regularized} FJRW $I$-function, which recovers the ordinary FJRW $I$-function via asymptotic expansion. We adopt the notions of \cite{2014arXiv1411.4162A} in our following settings.

Let $W=x^3+y^3+z^3+w^3$, which is precisely the local equation for the singularity in cubic extremal transitions. We consider the FJRW $I$-function associated to the pair $(W,G)$, where $G=\langle J_W\rangle$ is generated by 
$$J_W:=\left( \exp\left(\frac{2\sqrt{-1}\pi}{3}\right),\exp\left(\frac{2\sqrt{-1}\pi}{3}\right),\exp\left(\frac{2\sqrt{-1}\pi}{3}\right),\exp\left(\frac{2\sqrt{-1}\pi}{3}\right)\right).$$
The (small) FJRW $I$-function of $(W,G)$ is a formal power series given by:
\[I_{FJRW}(t,z=1):=-\sum_{l=0}^\infty t^{3l+1}\frac{(-1)^l\Gamma(l+\frac13)^4}{(3l)!\Gamma(\frac13)^4}\phi_0+\sum_{l=0}^\infty t^{3l+2}\frac{(-1)^l\Gamma(l+\frac23)^4}{(3l)!\Gamma(\frac23)^4}\phi_1,\tag{5.1}\]
where $\phi_0$ and $\phi_1$ are generators of the narrow sector of the state space of $(W,G)$. 

It is direct to see that the series (5.1) has zero radius of convergence. There is a natural way to regularize this function, as introduced in \cite{2014arXiv1411.4162A}.
\begin{definition}
	The regularized FJRW $I$-function is defined in \cite{2014arXiv1411.4162A} as follows.
	$$I_{FJRW}^{reg}(\tau):=\sum_{l=0}^\infty \frac{\tau^{l+\frac13}(-1)^{3l+1}\Gamma(l+\frac13)^4}{(3l)!\Gamma(l+\frac43)\Gamma(\frac13)^4}\phi_0+\sum_{l=0}^\infty \frac{\tau^{l+\frac23}(-1)^{3l+2}\Gamma(l+\frac23)^4}{(3l+1)!\Gamma(l+\frac53)\Gamma(\frac23)^4}\phi_1,$$
	and we define the \emph{FJRW $D$-module} of $(W,G)$ as the $D$-module (or the local system) attached to the components of the regularized FJRW $I$-function as above, denoted by ${\cL}(W)$
\end{definition}
\begin{remark} 
	The components of the regularized FJRW $I$-function of $(W,G)$ are both analytic near $\tau=0$. It is shown in \cite{2014arXiv1411.4162A} that this $I_{FJRW}^{reg}(\tau)$ may be used to recover the ordinary FJRW $I$-function $I_{FJRW}(t,z=1)$, through the method of asymptotic expansion. So $I_{FJRW}^{reg}(\tau)$ can be viewed as encoding the genus zero data of the (narrow part) FJRW theory of $(W,G)$.
\end{remark}

In Section 3 (or Section 4), we defined the hypergeometric series $I_5(x,y)$ and $I_6(x,y)$ as the "extra" solutions coming from the analytic continuation of the ambient part quantum $D$-module of $X$. Interestingly, these functions are directly related to the regularized FJRW theory of $(W,G)$ in the following way:
\begin{prop}
	The function $I_5(x,y)$ (resp. $I_6(x,y)$) is analytic in $y$ near the origin, and also it has trivial monodromy around $y=0$. It recovers, up to scalar multiple, the coefficient of $\phi_0$ (resp. $\phi_1$) in $I_{FJRW}^{reg}(\tau)$ by setting $x=\tau$, $z=1$ and $y\to 0$.
\end{prop}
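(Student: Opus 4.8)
The plan is to treat the two assertions—analyticity together with trivial $y$-monodromy, and the recovery of $I_{FJRW}^{reg}$—separately, since the first is a structural observation about the shape of the series while the second is a term-by-term comparison of coefficients. I will carry out the argument for $I_5$ and $\phi_0$; the case of $I_6$ and $\phi_1$ is identical after replacing $\frac13$ by $\frac23$ throughout.

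First I would reindex the double sum by setting $i=j+k$ with $k\geq 0$, which rewrites $I_5$ as
\[
I_5(x,y)=x^{\frac13}\sum_{k\geq 0}\frac{(-1)^{k}x^{k}\Gamma(\frac13+k)^4}{\Gamma(3k+1)}\left(\sum_{j\geq 0}\frac{(xy)^{j}}{\Gamma(\frac43+j+k)\,\Gamma(1+j)\,z^{2j}}\right).
\]
From this presentation two facts are immediate: the prefactor $x^{1/3}$ does not involve $y$, and the remaining series is a power series in $y$ carrying only nonnegative integer exponents and no logarithms. Hence analytic continuation of $I_5$ along a small loop about $y=0$ leaves it unchanged, which is the trivial $y$-monodromy. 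For analyticity I would bound the inner sum using $\Gamma(\frac43+j+k)\geq \Gamma(\frac43+k)$, giving $\sum_{j}\frac{|xy/z^2|^{j}}{\Gamma(\frac43+j+k)\Gamma(1+j)}\leq \Gamma(\frac43+k)^{-1}e^{|xy/z^2|}$; the outer sum then converges for $|x|<27$ by the ratio test, the decisive ratio being $\frac{(k+1/3)^4}{(3k+1)(3k+2)(3k+3)(k+4/3)}\to \frac{1}{27}$. Thus $I_5$ is holomorphic in $y$ on a full neighborhood of the origin, uniformly for $x$ in a punctured disc.

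Next I would compute the limit $y\to 0$. Because every $y$-exponent is a nonnegative integer, this limit simply extracts the $j=0$ part, namely
\[
\lim_{y\to 0}I_5(x,y)=x^{\frac13}\sum_{i\geq 0}\frac{(-1)^{i}x^{i}\Gamma(\frac13+i)^4}{\Gamma(\frac43+i)\,\Gamma(3i+1)}.
\]
Setting $z=1$ and $x=\tau$, it remains to match this against the coefficient of $\phi_0$ in $I_{FJRW}^{reg}(\tau)$, i.e. to compare
\[
\sum_{i\geq 0}\frac{(-1)^{i}\tau^{i+\frac13}\Gamma(\frac13+i)^4}{\Gamma(\frac43+i)\,\Gamma(3i+1)}
\quad\text{with}\quad
\sum_{l\geq 0}\frac{(-1)^{3l+1}\tau^{l+\frac13}\Gamma(l+\frac13)^4}{(3l)!\,\Gamma(l+\frac43)\,\Gamma(\frac13)^4}.
\]
Using $(3l)!=\Gamma(3l+1)$ and the sign identity $(-1)^{3l+1}=-(-1)^{l}$, the two series agree term by term up to the overall constant $-\Gamma(\frac13)^4$, which is the asserted scalar. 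Running the same computation for $I_6$, with $(3l+1)!=\Gamma(3l+2)$ and $(-1)^{3l+2}=(-1)^{l}$, produces the coefficient of $\phi_1$ up to the scalar $\Gamma(\frac23)^4$.

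The computation involves no genuine obstacle; the only points that demand care are the convergence estimate in the first step—where the coupling between $j$ and $k$ through $\Gamma(\frac43+j+k)$ must be handled so as to separate the (entire) $y$-series from the (radius-$27$) $x$-series—and the precise tracking of signs and the stray Gamma-factor that produces the overall scalar. Everything else is a direct symbolic comparison.
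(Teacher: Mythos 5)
Your proof is correct and follows essentially the same route as the paper's (which simply invokes a ratio test, observes that the series is an ordinary power series in $y$, and declares the coefficient comparison ``straightforward to check''); you have merely supplied the details, including the reindexing $i=j+k$, the convergence bound, and the explicit scalars $-\Gamma(\tfrac13)^4$ and $\Gamma(\tfrac23)^4$, all of which check out.
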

\begin{proof} It is analytic by a ratio test, and monodromy around $y=0$ is trivial because it is an ordinary power series in $y$ for every fixed value of $x$. As for the last part, it is straightforward to check. 
\end{proof}

As a direct corollary, we have
\begin{thm}
	Let $\bar \cH(X)$ be the $D$-module obtained by analytic continuation considered in Theorem 3.8 (or Theorem 4.4), then there is another divisor $F$ in the extended K\"ahler moduli and a submodule  $\bar{\cH}^F(X)\subseteq \bar{\cH}(X)$ which has maximal trivial $F$-monodromy such that
	$$\bar{\cH}^F(X)/\bar{\cH}^E(X)|_F\simeq {\cL}(W),$$
	where ${\cL}(W)$ represents the FJRW $D$-module of the pair $(W,G)$, in which $W=x^3+y^3+z^3+u^3$ and $G=\langle J_W\rangle$.
\end{thm}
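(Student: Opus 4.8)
The plan is to repeat the structural argument from \Cref{mytheorem}, but now tracking the \emph{extra} solutions $I_5$ and $I_6$ that were discarded there, since these are precisely the solutions encoding the FJRW data. Recall that in the analytic continuation of $I^X$ we obtained a six-dimensional solution space to $\{\triangle_1'I=\triangle_2'I=0\}$ (respectively the rank-6 system in Section 4), spanned by the four components of $\bar I^Y(x,y)$ together with $I_5(x,y)$ and $I_6(x,y)$. The submodule $\bar\cH^E(X)$ is the rank-4 piece generated by the components of $\bar I^Y$, characterized as the maximal sub-local-system with trivial $x$-monodromy. The first step is to identify the correct divisor $F$ and the correct submodule $\bar\cH^F(X)$: I would take $F$ to be the locus $\{y=0\}$ in the extended K\"ahler moduli, and $\bar\cH^F(X)$ to be the sub-local system spanned by $\bar I^Y$ together with $I_5$ and $I_6$. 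By \Cref{restrict} the components of $\bar I^Y$ have trivial $y$-monodromy, and by the Proposition just proved $I_5$ and $I_6$ are analytic with trivial monodromy around $y=0$; hence all six spanning solutions have trivial $F$-monodromy, so in fact $\bar\cH^F(X)=\bar\cH(X)$ has maximal (indeed full) trivial $F$-monodromy.

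Next I would analyze the quotient. Since $\bar\cH^E(X)\subseteq\bar\cH^F(X)$ is the rank-4 submodule attached to $\bar I^Y$, the quotient $\bar\cH^F(X)/\bar\cH^E(X)$ is a rank-2 local system whose fiber is spanned by the images of $I_5$ and $I_6$ modulo the span of the $\bar I^Y$-components. Restricting to $F=\{y=0\}$, I would take the limit $y\to 0$ of the two extra solutions. The key computation is that, by the Proposition, the coefficient of $\phi_0$ (respectively $\phi_1$) in the regularized FJRW $I$-function $I_{FJRW}^{reg}(\tau)$ is recovered up to a scalar from $\lim_{y\to 0}I_5(x,y)$ (respectively $I_6$) under the identification $x=\tau$, $z=1$. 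Thus the restriction $\bar\cH^F(X)/\bar\cH^E(X)|_F$ is the rank-2 local system spanned by these limits, which by Definition 5.1 is exactly the FJRW $D$-module ${\cL}(W)$. I would verify that the $D_\tau$-module structure matches: the differential operator in $\tau$ annihilating the FJRW solutions should be obtained from the restriction to $y=0$ of the operators $\triangle_i'$ acting on the quotient, which I expect to reduce to the hypergeometric operator governing $I_5,I_6$ in the single variable $x=\tau$.

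The main obstacle, I expect, will be making the quotient construction canonical and checking that the restriction operation commutes appropriately with passing to the quotient. Concretely, one must ensure that $\bar\cH^E(X)$ is a genuine $D_{(x,y)}$-submodule (not merely a subspace of solutions at a point), so that the quotient local system is well-defined and flat, and then that restricting to $\{y=0\}$ sends this quotient to the FJRW $D$-module rather than to some extension of it. Because $I_5$ and $I_6$ carry the fractional initial terms $x^{1/3}$ and $x^{2/3}$ while the $\bar I^Y$-components do not, the quotient is a clean rank-2 object whose $x$-monodromy is nontrivial; the delicate point is confirming that the $y\to 0$ limit of the hypergeometric coefficients is term-by-term the stated FJRW series, which amounts to matching the Gamma-factor structure in $I_5,I_6$ against that in $I_{FJRW}^{reg}$. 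This last matching is the content of the Proposition and is asserted there to be a straightforward check, so once the $D$-module-theoretic framing is in place the conclusion $\bar\cH^F(X)/\bar\cH^E(X)|_F\simeq{\cL}(W)$ follows.
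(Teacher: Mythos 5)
There is a genuine gap in your identification of the submodule $\bar\cH^F(X)$. You claim that all six spanning solutions have trivial monodromy around $F=\{y=0\}$, citing Lemma~3.4 for the components of $\bar I^Y(x,y)$; but Lemma~3.4 asserts trivial monodromy around $x=0$, not around $y=0$. In fact the components $I_2,I_3,I_4$ of $\bar I^Y(x,y)$ (the coefficients of $p^2,p^3,p^4$) do \emph{not} have trivial $y$-monodromy: the prefactor $y^{p/z}=\exp\bigl(\tfrac{p}{z}\log y\bigr)$ injects powers of $\log y$ into every component except the leading one $I_1$. Consequently your candidate $\bar\cH^F(X)=\bar\cH(X)$ does not have trivial $F$-monodromy at all, let alone maximal trivial $F$-monodromy as the theorem requires. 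The correct choice, which is what the paper takes, is the rank-$3$ submodule spanned by $I_1,I_5,I_6$ (the constant term of $\bar I^Y$ together with the two extra hypergeometric solutions); its intersection with $\bar\cH^E(X)=\mathrm{span}\{I_1,I_2,I_3,I_4\}$ is $\mathrm{span}\{I_1\}$, and the quotient is the rank-$2$ local system generated by the images of $I_5$ and $I_6$.

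Your final answer happens to land on the same rank-$2$ quotient, because modding the full rank-$6$ module by the rank-$4$ submodule also leaves $I_5,I_6$; but this is an accident of the ranks, and the intermediate object you construct violates the defining property stated in the theorem. The rest of your argument --- restricting to $y=0$, matching the Gamma-factor structure of $I_5,I_6$ against $I_{FJRW}^{reg}(\tau)$ via Proposition~5.3 under $x=\tau$, $z=1$, and your (reasonable) concern about whether the quotient of local systems commutes with restriction to $F$ --- is in line with the paper's proof, which is likewise brief on the $D$-module formalities. To repair the proof you need only replace your $\bar\cH^F(X)$ by the span of $I_1,I_5,I_6$ and check maximality by noting that each of $I_2,I_3,I_4$ acquires a nontrivial shift under the $y$-monodromy because of its $\log y$ terms.
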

\begin{proof}
	Using the notations from Section 3 (or Section 4), we let $I_1,I_2,I_3,I_4$ be the components of $\bar I^Y(x,y)$, where $I_1$ represents the constant term in $\bar I^Y(x,y)$. We notice that $I_1$ is analytic in an open neighborhood of the origin, and $I_5$ and $I_6$ have trivial monodromy around $y=0$. Thus to single out the components $I_5,I_6$, we should take the maximal components that have trivial $y$-monodromy, namely $I_1,I_5,I_6$, and then modulo the maximal components with trivial $x-$mondromy, namely $I_1,I_2,I_3,I_4$. In terms of $D$-modules, we just need to take the restriction $\bar{\cH}^F(X)/\bar{\cH}^E(X)|_F$, in which the divisor $F$ corresponds to $y= 0$ in the extended K\"ahler moduli. Hence the theorem is proved.
\end{proof}

\bibliography{research}
\bibliographystyle{plain}

\end{document}